\newtheorem{thm}{Theorem}[subsection]
\newtheorem*{thmon}{Main Theorem}
\newtheorem{cor}[thm]{Corollary}
\newtheorem{remon}{Remark}
\newtheorem{lem}[thm]{Lemma}
\newtheorem{prop}[thm]{Proposition}
\newtheorem{defi}[thm]{Definition}
\newtheorem{rem}[thm]{Remark}
\newtheorem{exa}[thm]{Example}
\newcounter{cnt}
\def\mydggeometry{\makeatletter\dg@YGRID=1\dg@XGRID=20\unitlength=0.003pt\makeatother}
\makeatother \theoremstyle{remark}
\numberwithin{equation}{section}
\let\bwdg\bigwedge
\def\bigwedge{{\textstyle\bwdg}}
\theoremstyle{definition}
\theoremstyle{definition}
\newtheorem*{theoA}{Theorem A}
\newtheorem*{theoB}{Theorem B}
\newcommand\al{\alpha}
\newcommand\Lg{\mathfrak{g}}
\newcommand\Lh{\mathfrak{h}}
\newcommand\Ln{\mathfrak{n}}
\newcommand{\bp}{{\bf p}}
\newcommand{\bs}{{\bf s}}
\newcommand{\bt}{{\bf t}}
\newcommand{\bq}{{\bf q}}
\newcommand{\br}{{\bf r}}
\newcommand{\bm}{{\bf m}}
\newcommand{\bn}{{\bf n}}
\newcommand{\bx}{{\bf x}}
\newcommand{\bb}{{\bf b}}
\newcommand{\bu}{{\bf u}}
\newcommand{\bh}{{\bf h}}
\DeclareMathOperator{\height}{ht}
\newcommand{\Z}{\mathbb Z }
\newcommand{\N}{\mathbb N }
\newcommand{\R}{\mathbb R }
\newcommand{\B}{\mathbb B }
\newcommand{\C}{\mathbb C }
\newcommand{\Bw}{\mathbb B_{\omega} }
\newcommand{\Bwn}{\mathbb B_{\omega_n}}
\newcommand{\rr}{\Delta_{+}^{\omega}} 
\newcommand{\rrk}{\Delta_{+}^{\omega_k}}
\newcommand{\rrn}{\Delta_{+}^{\omega_n}}
\newcommand{\rrnme}{\Delta_{+}^{\omega_{n-1}}}
\newcommand{\rrp}{\Delta_{+}}
\newcommand{\wk}{\omega_k}
\newcommand{\rrl}{\Delta_{+}^{\lambda}}
\newcommand{\prr}{\mathcal{P}(\Delta)}
\newcommand{\prrl}{\mathcal{P}(\Delta_{+}^{\lambda})}
\newcommand{\prk}{\mathcal{P}(\Delta_{+}^{\omega_k})}
\newcommand{\prn}{\mathcal{P}(\Delta_{+}^{\omega_n})}
\newcommand{\dsw}{\overline{D}_{\omega}}
\newcommand{\dswk}{\overline{D}_{\omega_k}}
\newcommand{\dswn}{\overline{D}_{\omega_n}}
\newcommand{\dswnme}{\overline{D}_{\omega_{n-1}}}
\newcommand{\dsl}{\overline{D}_{\lambda}}
\newcommand{\maxx}{\mathrm{max}}     
\newcommand{\minn}{\mathrm{min}}     
\newcommand{\pmw}{P(m\omega)}     	 
\newcommand{\smw}{S(\lambda)}        
\newcommand{\sw}{S(\omega)}
\newcommand{\supp}{\mathrm{supp}}    
\newcommand{\suppm}{\mathrm{supp_{\mathrm{m}}}}
\newcommand{\ms}{M_{\bs}}  
\newcommand{\Vw}{V(\omega)}  
\newcommand{\vw}{v_{\omega}} 
\newcommand{\vwe}{v_{\omega_1}} 
\newcommand{\vwk}{v_{\omega_k}} 
\newcommand{\vmw}{v_{\lambda}} 
\newcommand{\Vwk}{V(\omega_k)}
\newcommand{\Vmw}{V(\lambda)}
\newcommand{\Vmwa}{V(\lambda)^{a}}
\newcommand{\unm}{U(\mathfrak{n}^-)}
\newcommand{\unp}{U(\mathfrak{n}^+)}
\newcommand{\snm}{S(\mathfrak{n}^-)}
\newcommand{\Sn}{S(\mathfrak{n}^{-})}
\newcommand{\g}{\mathfrak{g}}
\newcommand{\Bl}{\mathbb{B}_{\lambda}}
\newcommand{\Vl}{V(\lambda)}
\newcommand{\vl}{v_{\lambda}}
\newcommand{\Vla}{V(\lambda)^{a}}
\newcommand{\Il}{I(\lambda)}
\newcommand{\imw}{I(\lambda)}
\newcommand{\Imw}{I(\lambda)}
\newcommand{\Hgl}{H(\Ln_{\omega_i}^-)_{\Lg}}
\newcommand{\Hgw}{H(\Ln_{\omega_i}^-)_{\Lg}}
\newcommand{\Unp}{U(\mathfrak{n}^+)}
\newcommand{\Un}{U(\mathfrak{n}^-)}
\newcommand{\w}{\omega}
\newcommand{\Snml}{S(\mathfrak{n}_{\lambda}^{-})}
\newcommand{\Lnml}{\mathfrak{n}_{\lambda}^{-}}
\newcommand{\bps}{\overline{\bp}}
\newcommand{\ail}{\alpha_{i_l,j_l}}
\newcommand{\aim}{\alpha_{i_m,j_m}}
\newcommand{\air}{\alpha_{i_r,j_r}}
\newcommand{\es}{\mathrm{es}}
\newcommand{\mf}{\mathfrak}
\begin{document}

\title[PBW filtration: FFL modules via Hasse diagrams]{PBW filtration: Feigin-Fourier-Littelmann modules via Hasse diagrams}
\author{Teodor Backhaus and Christian Desczyk}
\address{
Mathematisches Institut, Universit\"at zu K\"oln,\newline
Weyertal 86-90, D-50931 K\"oln, Germany}
\email{tbackha@math.uni-koeln.de}
\email{cdesczyk@math.uni-koeln.de}


\subjclass[2010]{}
\begin{abstract}
We study the PBW filtration on the irreducible highest weight representations of simple complex finite-dimensional Lie algebras. This filtration is induced by the standard degree filtration on the universal enveloping algebra. For certain rectangular weights we provide a new description of the associated graded module in terms of generators and relations. We also construct a basis parametrized by the integer points of a normal polytope. The main tool we use is the Hasse diagram defined via the standard partial order on the positive roots. As an application we conclude that all representations considered in this paper are Feigin-Fourier-Littelmann modules.
\end{abstract}

\maketitle \thispagestyle{empty}
\section*{Introduction}
We recall briefly the construction of the PBW filtration. We consider a simple complex finite-dimensional Lie algebra $\g$ and a triangular decomposition ${\g=\mathfrak{n}^+\oplus \mathfrak{h}\oplus \mathfrak{n}^-}$. We denote by $\Vl$ the irreducible finite-dimensional module of highest weight $\lambda$ and by $\vl$ a highest weight vector, then we have $\Vl=\unm\vl$. The degree filtration $\unm_s$ on  the universal enveloping algebra $\unm$ over $\Ln^-$ is defined by: 
\begin{equation*}
\unm_s= \mathrm{span}\{x_1\cdots x_l\mid x_i\in \mathfrak{n}^-,\ l\leq s\}.
\end{equation*}
This filtration induces the PBW filtration on $\Vl$, where the $s$-th filtration component is given by $\Vl_s=\unm_s\vl$. The associated graded space $\Vla$, with respect to the PBW filtration, is a $\Sn$-module generated by $\vl$, where $\Sn$ is the symmetric algebra over $\Ln^-$. Then we have for $\Il\subseteq\Sn$ the annihilator of the generating element: 
\begin{equation*}
\Vla=\Sn\vl\cong\Sn/\Il.
\end{equation*}
There are some natural questions (see also \cite{FFoL11a}):
\begin{itemize}
	\item Is it possible to describe $\Vla$ explicitly as a $\Sn$-module, i.e. is it possible to describe the generators of the ideal $\Il$?
	\item Is it possible to find an explicit combinatorial description of a monomial basis of $\Vla$?
\end{itemize}
We will call such a basis a \textit{Feigin-Fourier-Littelmann} or just FFL basis and $\Vla$ a FFL module, if the bases of $V(m\lambda)^{a}$, $m\in\Z_{\geq 0}$ are parametrized by the integer points of a normal polytope $P(m)$. \\ For both questions there is a positive answer in the cases of $\mathfrak{sl}_n$ and $\mathfrak{sp}_{2n}$ for arbitrary dominant integral weights
(see \cite{FFoL11a} and \cite{FFoL11b}). Further the second question is positively answered for $\mathtt{G_2}$ (see \cite{G11}). In this paper we focus on certain rectangular weights and prove the following theorem:
\newpage
\begin{thmon} Let $\Lg$ be a simple complex finite-dimensional Lie algebra and ${\lambda = m \omega_i,}$ $ m \in \Z_{\geq 0}$ be a rectangular weight, where $\Lg$ and $\w_i$ appear in the same row of Table \ref{introcases}. Further let $V(\lambda)^a \cong S(\mf{n}^-) / I(\lambda).$ Then there is a positive answer for both questions above, in particular:
\begin{itemize}
	\item $I(\lambda) = S(\mf{n}^-)\left( \Unp \circ \mathrm{span} \{f_{\beta}^{\langle\lambda,\beta^{\vee}\rangle + 1} \mid \beta \in \rrp\}\right).$
	\item $V(\lambda)^a$ is a FFL module.
\end{itemize}
Here we denote with $\Delta_+$ the set of positive roots of $\Lg$. 
\begin{center}
\begin{minipage}{\linewidth}
\centering
$\begin{array}{|c|c||c|c|}
\hline
\text{Type of $\Lg$}&\text{weight $\omega$}&\text{Type of $\Lg$}&\text{weight $\omega$}\\
\hline\hline
\mathtt{A_n}&\w_k,\ 1\leq k\leq n&\mathtt{E_6}&\w_1,\ \w_6\\
\hline
\mathtt{B_n}&\w_1,\ \w_n&\mathtt{E_7}&\w_7\\
\hline
\mathtt{C_n}&\w_1&\mathtt{F_4}&\w_4\\
\hline
\mathtt{D_n}&\w_1,\ \w_{n-1},\ \w_n&\mathtt{G_2}&\w_1\\
\hline
\end{array}$
\captionof{table}{Solved cases}
    \label{introcases}
  \end{minipage}
\end{center}
\end{thmon}
\begin{remon} The Theorem above implies the existence of a normal polytope $P(m\w_i)$ such that the integer points $S(m\w_i)$ parametrize a basis of $V(m\w_i)$. This polytope is the $m$-th Minkowski sum of the polytope $P(\omega_i)$ corresponding to $V(\w_i)$. In general this is not true for different fundamental weights, because the number of integer points in the Minkowski sum is too small. For example in the case of $\mathfrak{g} = \mathfrak{sl}_5$, we have $|(P(\w_1) + P(w_2) + P(\w_3) + P(\w_4)) \cap \Z_{\geq 0}^N| = 1023$ and $\dim V(\w_1+\w_2+\w_3+\w_4) = 1024$.
\end{remon}
\begin{remon}
The bases obtained in \cite{FFoL11a}, which were conjectured by Vinberg (see \cite{V05}) and obtained in \cite{FFoL11b} are different from our bases. This is due to a different choice of the total order on the monomials in $\snm$. As a consequence the induced normal polytopes are also different. Nevertheless in the cases $(\mathtt{A_n},\w_k)$ the corresponding projective toric varieties are isomorphic. In contrast, these are in general not isomorphic to the toric varieties corresponding to Gelfand-Tsetlin polytopes investigated in \cite{GL} and \cite{KM05}.
\end{remon}
We explain briefly the methods used in our paper. Our main tool is the Hasse diagram of $\Lg$ given by the standard partial order on the positive roots of $\Lg$. We associate to this directed graph a normal polytope $P(\lambda)=P(m\w_i)\subset \R^N_{\geq 0}$ via the directed paths. If the Hasse diagram satisfies certain properties, the set of integer points $S(\lambda)=P(\lambda)\cap \Z^N_{\geq 0}$ parametrizes a FFL basis of $\Vla$. So we reduce 
the questions above
to the combinatorics of the Hasse diagram and provide a general procedure which uses the structure of the Hasse diagram. As an important application we show that the modules $V(m\omega_i), m \in \Z_{\geq 0}$ are FFL modules, where $\omega_i$ appears in Table \ref{introcases}.\\
Except for the cases listed in Table \ref{introcases} it is much more involved to obtain a polytope which parametrizes a FFL basis. Even in the cases ($\mathtt{B_n,\w_1}$), ($\mathtt{F_4,\w_4}$) and ($\mathtt{G_2,\w_1}$) we have to change the Hasse diagram slightly, to be able to apply our procedure.\\
The property of being a FFL module implies some nice consequences. For example the corresponding degenerate flag varieties are normal and Cohen-Macaulay. Further there is an explicit representation theoretical description of the corresponding homogeneous coordinate rings. Another important property is the interpretation of the describing polytopes as Newton-Okounkov bodies (see \cite{FFoL13a} and for more details on Newton-Okounkov bodies see \cite{KK} and \cite{HK}). 
\vspace{2mm}

In the recent years it turned out that the PBW theory has a lot of connections to many areas of representation theory. For example to the geometric representation theory: Schubert varieties (\cite{IL14}, \cite{CLaL14}) and degenerate flag varieties (\cite{FFiL}, \cite{F1}, \cite{F2}, \cite {CIFR12} and \cite{C}). Further there are connections to combinatorial representation theory for example to Schur functions (\cite{Fo}), combinatorics of crystal basis (\cite{K1}, \cite{K2}) and Macdonald polynomials (\cite{CF13}, \cite{FM14}). A purely combinatorial research on the FFL polytopes can be found in \cite{ABS}. A general formular for the maximal degree of $V(\lambda)^a$ for arbitrary dominant integral weights $\lambda$ is provided in \cite{BBDF14}. 
\vspace{2mm}

Our paper is organized as follows:\\In Section 1 we introduce the constructions and tools we use. Furthermore we state our Main Theorems and provide the connection to FFL modules. In Section 2 we prove that all polytopes considered in this paper are normal. Sections 3, 4 and 5 are devoted to the proof of our Main Theorems. In Section 4 we calculate explicitly FFL bases of $V(\omega)$ for all cases listed in Table \ref{introcases}. Finally in the Appendix we give some explicit examples of Hasse diagrams and normal polytopes.  

\section{PBW Filtration}\label{test}
\subsection{Definitions}\label{Definitions}
Let $\g$ be a simple complex finite-dimensional Lie algebra and let $\g=\mathfrak{n}^+\oplus \mathfrak{h}\oplus \mathfrak{n}^-$ be a triangular decomposition.\\ For a dominant integral weight $\lambda$ we denote by $\Vl$ the irreducible $\g$-module with highest weight $\lambda$. We fix a highest weight vector $\vl\in\Vl$. Then we have $\Vl=\unm\vl$. The degree filtration $\unm_s$ on $\unm$ is defined by: 
\begin{equation}\label{degfilt}
\unm_s= \mathrm{span}\{x_1\cdots x_l\mid x_i\in \mathfrak{n}^-,\ l\leq s\}.
\end{equation}
In particular, $\unm_0= \C\mathds{1}$. So we have an increasing chain of subspaces:\\ $\unm_0\subseteq\unm_1\subseteq\unm_2\subseteq\dots $. The filtration (\ref{degfilt}) induces a filtration on $\Vl$: $\Vl_s=\unm_s\vl$, the PBW filtration.\\ We consider the associated graded space $\Vla$ of $\Vl$ defined by:
\begin{equation}\label{vla}
\Vla=\bigoplus_{s  \, \in \,  \Z_{\geq 0}} \Vl_s/\Vl_{s-1},\ \Vl_{-1}=\{0\}.
\end{equation}
Let $\Delta_+\subset \Lh^*$ be the set of positive roots of $\Lg$ and $\Phi_+=\{\alpha_1\dots,\alpha_n \}\subset\Delta_+$ the subset of simple roots, where $n\in \N$ is the rank of the Lie algebra $\Lg$. Further we denote by $f_{\beta}\in \Ln^-$ the root vector corresponding to $\beta\in\Delta_+$. Let $\langle \lambda,\beta^{\vee}\rangle=\frac{2(\lambda,\beta)}{(\beta,\beta)}$, where $\beta^{\vee} = \frac{2\beta}{(\beta,\beta)}$ is the coroot of $\beta$ and $(\cdot,\cdot)$ is the Killing form. We define
\begin{equation*}
\Ln_{\lambda}^{-} := \text{span}\{f_{\beta} \mid \langle \lambda,\beta^{\vee}\rangle \geq 1\} \subset \Ln^{-}.
\end{equation*}
Throughout this paper we focus on certain rectangular weights $\lambda=m\omega_i$, $m\in \Z_{\geq 0}$ (see Table \ref{introcases}).\\ 
Let $\beta = \sum_{j=1}^n n_j \al_j,\ n_j \in \Z_{\geq 0}$ be a positive root with $n_i \geq 1$. Then we have for the coroot $\beta^{\vee}= \sum_{j=1}^n n_j^{\vee} \al_j^{\vee}$: $n_i^{\vee} \geq 1$.  Conversely starting with a coroot $\beta^{\vee}$, with $n_i^{\vee}\geq 1$ we have for the corresponding positive root $\beta$: $n_i \geq 1$. Hence, independent of the choice of $m\geq 1$: $$\Ln_{\w_i}^-=\Ln_{m\w_i}^{-} \subset \Ln^{-}$$ is the Lie subalgebra spanned by those root vectors $f_\beta$, where $\al_i$ is a summand of $\beta$.\\ From the PBW-Theorem we get $U(\Ln_{\lambda}^-)^a = S(\mathfrak{n}_{\lambda}^{-}) = \C[f_\beta \mid \langle \lambda,\beta^{\vee}\rangle \geq 1]$, where $S(\mathfrak{n}_{\lambda}^{-})$ is the symmetric algebra over $\Ln_{\lambda}^-$.
\begin{rem}
$(i)$ We have $V(\lambda) = U(\Ln_{\lambda}^-)v_{\lambda}.$ The action of $U(\Ln_{\lambda}^-)$ on $\Vl$ induces the structure of a $S(\mathfrak{n}_{\lambda}^{-})$-module on $\Vla$ and 
\begin{equation}\label{cyclic}
\Vla=S(\Ln^{-})\vl = S(\mathfrak{n}_{\lambda}^{-})\vl.
\end{equation}
$(ii)$ The action of $\unp$ on $\Vl$ induces the structure of a $\unp$-module on $\Vla$. Note for $e_{\al} \in \Ln^+ \hookrightarrow U(\Ln^+), f_{\beta} \in \mathfrak{n}_{\lambda}^{-} \hookrightarrow S(\mathfrak{n}_{\lambda}^{-})$, $[e_\al,f_\beta]$ is not in general an element of $S(\mathfrak{n}_{\lambda}^{-})$, but for $f_{\nu} \in \Sn \setminus S(\mathfrak{n}_{\lambda}^{-})$ we have $f_{\nu}v_\lambda = 0$. That follows from the well known description (see \cite{Hum72}) of $V(\lambda)$:
\begin{equation}\label{wkd}
V(\lambda)=\Un/\langle f_{\beta}^{\langle \lambda,\beta^{\vee}\rangle + 1} \mid \beta\in \Delta_+\rangle.
\end{equation}
\end{rem}
Equation (\ref{cyclic}) shows that $\Vla$ is a cyclic $\Snml$-module and hence there is an ideal $I_{\lambda} \subseteq \Snml$ such that $\Vla\simeq\Snml/ I_{\lambda}$, where $I_{\lambda}$ is the annihilating ideal of $\vl$. We have therefore the following projections:
\begin{equation*}
S(\Ln^-) \rightarrow S(\Ln^-)/\left< f_{\beta} \mid \langle \lambda,\beta^{\vee}\rangle = 0\right> = \Snml \rightarrow \Snml/I_{\lambda}.
\end{equation*}
Hence, although we work with $\Lnml$, we actually consider $\Ln^-$-modules. So our aims in this paper are 
\begin{itemize}
	\item To describe $\Vla$ as a $\Snml$-module, i. e. describe explicitly generators of the ideal $I_{\lambda}.$
	\item To find a basis of $\Vla$ parametrized by integer points of a normal polytope $P(\lambda)$ (see $\eqref{poly}$).
\end{itemize}
To achieve these goals we have to introduce further terminology.
We denote the set of positive roots associated to $\Ln_{\lambda}^-$ by
\begin{equation}\label{relroots}
\rrl=\{ \beta \in \Delta_+|\ \langle \lambda,\beta^{\vee}\rangle \geq 1\}=:\{\beta_1,\dots,\beta_N \} \subseteq \Delta_+,\ |\rrl|=N\in \Z_{\geq 0}.
\end{equation}
\begin{exa}
We write $(r_1,r_2,\dots,r_n)$ for the sum: $\sum_{k=1}^n r_k\alpha_k$. Let $\Lg$ be of type $\mathtt{A_4}$ and $\lambda=\omega_3$, the third fundamental weight. Then we have:
\begin{align*}
\Delta_+^{\omega_3}= \{&\beta_1=(1,1,1,1),\beta_2=(0,1,1,1),\beta_3=(1,1,1,0),\\
&\beta_4=(0,0,1,1),\beta_5=(0,1,1,0),\beta_6=(0,0,1,0)\}\subset\Delta_+.
\end{align*}
\end{exa}
We choose a total order $\prec$ on $\rrl$:
\begin{equation}\label{totalorder}
    \beta_1\prec\beta_2\prec\cdots\prec\beta_{N-1}\prec\beta_N.
\end{equation}
We assume that this order satisfies the following conditions:
\begin{itemize}
\item[(i)] Let $\geq$ be the standard partial order on the positive roots, then
\begin{equation*} 
\beta_i > \beta_j \Rightarrow \beta_i \prec \beta_j.
\end{equation*}
\item[(ii)]  Let $\beta_i=(r_1,\dots,r_n), \beta_j = (t_1,\dots,t_n)$ and we
define the height as the sum over these entries: 
$\height(\beta_i) = \sum_{i = 1}^{n} r_i, \height(\beta_j) = \sum_{i = 1}^n t_i$. Then 
\begin{equation*} 
\height(\beta_i) > \height(\beta_j) \Rightarrow  \beta_i \prec \beta_j.
\end{equation*}
\item[(iii)] If $\beta_i$ and $\beta_j$ are not comparable in the sense of $(i)$ and $(ii)$, then\\ $\beta_i \prec \beta_j \Leftrightarrow$  $\beta_i$ is greater than $\beta_j$ lexicographically, i.e. there exists $1 \leq k \leq n,$ such that $r_k > t_k$ and $r_i = t_i$ for $1 \leq i < k$.
\end{itemize}
\begin{rem}\label{fixedbasis} The explicit order of the roots depends on the Lie algebra and the chosen weight, see Section $\ref{uebergreifend2}$.
But in all cases considered in this paper we have $\beta_1=\theta$, the highest root of $\Lg$ and $\beta_N$ is the simple root $\al_i$. 
\end{rem}
In order to make our equations more readable we write for $1\leq i\leq N$: $f_i=f_{\beta_i}$ and $s_i=s_{\beta_i}$. We associate to the multi-exponent $\bs=(s_{i})_{i=1}^{N}\in\Z_{\geq 0}^{N}$ the element    
\begin{equation}\label{eleinsnm}
	f^{\bs}=\prod_{i=1}^{N}f_{i}^{s_{i}}\in \Snml,
\end{equation}
and define the degree of $f^{\bs}\vl \neq 0$ in $\Vla$ by $\mathrm{deg}(f^{\bs}\vl)=\mathrm{deg}(f^{\bs}) = \sum_{i=1}^{N}s_{i}$, or $\mathrm{deg}(f^{\bs}\vl) = 0$ if $f^{\bs}\vl = 0$.
We extend $\prec$ to the homogeneous lexicographical total order on the monomials of $\Snml$ (resp. multi-exponents).\\
Let $\bs,\bt \in \Z_{\geq 0}^N$ be two multi-exponents. We say  $f^\bs \succ f^\bt$ or $\bs \succ\bt$ if
\begin{itemize}
\item $\mathrm{deg}(f^\bs) > \mathrm{deg}(f^\bt)$ or
\item $\mathrm{deg}(f^\bs) = \mathrm{deg}(f^\bt)$ and $\exists \, 1\leq k\leq N:(s_k>t_k) \wedge \,\forall \, k<j\leq N:(s_j=t_j)$.
\end{itemize}
For example: $f_{1}^1f_{2}^2f_{3}^0\prec f_{1}^2f_{
2}^0f_{3}^1\prec f_{1}^1f_{2}^0f_{3}^2$.
\begin{rem}\label{degreeinvariance}
Because the action of $\Ln^+$ on $\Vl$ is induced by the adjoint action, we 
know that $\Vl_s, s\in\Z_{\geq 0}$ is stable under the action of $\Ln^+$: for $e\in\Ln^+$ and $x_1\cdots x_s\vl\in\Vl_s$ we have
\begin{equation*}
e.x_1\cdots x_s\vl= \sum_{i\,=\,1}^s x_1\cdots x_{i-1}[e,x_i]x_{i+1}\cdots x_s\vl\in\Vl_s. 
\end{equation*}
Hence $\Vl_s$ is a $\Unp$-module. So for $f^{\bt}\vl$ in $\Vla = \bigoplus_{s \geq 0} V(\lambda)_s / V(\lambda)_{s-1}$ we have $\mathrm{deg}(u f^\bt\vl)\in\{0,\mathrm{deg}(f^\bt\vl)\}$ for all $u\in\Unp$.
\end{rem}
The next Lemma is devoted to give a better understanding of the module $\Vla$, but we will not need it to prove our main statements.
\begin{lem}
Let $f^{\bm}\in\snm$ with $f^{\bm}\vl\neq 0$ in $\Vla$ and weight $\mathrm{wt}(f^{\bm})=\lambda-w_0(\lambda)$, where $w_0$ is the longest element in the Weyl group of $\Lg$ and $w_0(\lambda)$ is the lowest weight of $\Vl$. Then
\begin{equation*}
\mathrm{deg}(f^{\bn})\leq\mathrm{deg}(f^{\bm}),\ \forall f^{\bn}\vl\neq 0\in\Vla.
\end{equation*}
\end{lem}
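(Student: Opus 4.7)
The plan is to identify $d := \mathrm{deg}(f^{\bm})$ with the PBW degree of the lowest weight vector $v_{w_0(\lambda)} \in V(\lambda)$, and then show that $d$ is an upper bound for the degree of every nonzero element of $V(\lambda)^a$. The first point is routine: the condition $\mathrm{wt}(f^{\bm}) = \lambda - w_0(\lambda)$ forces $f^{\bm}\vl$ to have weight $w_0(\lambda)$, and since weight spaces survive the passage to the associated graded, $f^{\bm}\vl$ lies in the one-dimensional lowest weight space of $V(\lambda)^a$. Being nonzero by hypothesis, it is a scalar multiple of the class of $v_{w_0(\lambda)}$, so $v_{w_0(\lambda)}\in V(\lambda)_{d}\setminus V(\lambda)_{d-1}$.

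The central step is to establish the lowest-weight realization $V(\lambda) = U(\Ln^+)\, v_{w_0(\lambda)}$. I would verify this by showing that the right-hand side is a $\Lg$-submodule: it is manifestly stable under $\Ln^+$ and $\Lh$, and stability under $\Ln^-$ follows by PBW re-ordering any product $f \cdot e_{\beta_1}\cdots e_{\beta_k}$ into $U(\Ln^+)\cdot U(\Lh)\cdot U(\Ln^-)$ order and applying the result to $v_{w_0(\lambda)}$; every summand with a nontrivial $U(\Ln^-)$-tail vanishes because $\Ln^-$ annihilates the lowest weight vector, while the surviving terms lie in $U(\Ln^+)\, v_{w_0(\lambda)}$. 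This gives a nonzero $\Lg$-submodule of the irreducible $V(\lambda)$, hence all of $V(\lambda)$. Combining with Remark \ref{degreeinvariance}, which guarantees that $U(\Ln^+)$ preserves each filtration piece $V(\lambda)_s$, I obtain $V(\lambda) = U(\Ln^+)\, v_{w_0(\lambda)} \subseteq V(\lambda)_d$, so $V(\lambda)_s = V(\lambda)$ for every $s \geq d$. Therefore every graded piece $V(\lambda)_s/V(\lambda)_{s-1}$ with $s > d$ vanishes, which forces $\mathrm{deg}(f^{\bn}) \leq d = \mathrm{deg}(f^{\bm})$ for every $f^{\bn}$ with $f^{\bn}\vl \neq 0$ in $V(\lambda)^a$.

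The only step that is not a formality is the lowest-weight realization $V(\lambda)=U(\Ln^+)\,v_{w_0(\lambda)}$; this is a standard fact about irreducible finite-dimensional $\Lg$-modules, so I do not anticipate a genuine obstacle, though the PBW bookkeeping for $\Ln^-$-stability is the only substantive computation in the argument. Everything else drops out immediately from Remark \ref{degreeinvariance} together with the definition of the associated graded.
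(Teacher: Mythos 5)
Your proposal is correct and follows essentially the same route as the paper: both identify $f^{\bm}\vl$ (up to a nonzero scalar) with the lowest weight vector, use the lowest-weight realization $\Vl=\Unp v_{w_0(\lambda)}$, and invoke Remark \ref{degreeinvariance} to see that the $\Unp$-action cannot raise PBW degree. The only cosmetic difference is that you phrase the conclusion as the filtration stabilizing at $\Vl_d$ (so all higher graded pieces vanish), whereas the paper applies the degree bound element-wise via $f^{\bn}\vl = x(f^{\bm}\vl)$ for some $x\in\Unp$.
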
 
\begin{proof}
Let $v_{w_0(\lambda)}$ be a lowest weight vector  
such that:
\begin{equation*}
\Vl=\Unp v_{w_0(\lambda)}.
\end{equation*}
Hence we can interpret $\Vl$ as a lowest weight module. The lowest weight $\w_0(\lambda)$ is in the Weyl group orbit of $\lambda$, thus $\dim\Vl_{w_0(\lambda)}=1=\dim\Vl_{\lambda}$. So there is a minimal $s\in\Z_{\geq 0}$ such that: $\Vl_{w_0(\lambda)}\subseteq \Vl_s$. Further there exists a scalar $c\in \C$ with $f^{\bm}\vl=cv_{w_0(\lambda)}$.\\ For an arbitrary element $f^{\bn}\vl\neq 0\in \Vla$ we fix the order of the factors to obtain $f^{\bn}\vl\in\Vl$. Then there exists an element $x\in \Unp$ such that: $f^{\bn}\vl=x(f^{\bm}\vl)$. This implies with Remark \ref{degreeinvariance}: $\mathrm{deg}(f^{\bn})\leq\mathrm{deg}(f^{\bm})$.
\end{proof}
Associated to the set $\mathfrak{n}_{\lambda}^-$ we define a directed graph $H(\mathfrak{n}_{\lambda}^-)_{\mathfrak{g}}:=(\rrl
,E)$. The set of vertices is given by $\rrl$ and the set of edges $E$ 
is constructed as follows:
\begin{equation*}
	\forall\, 1\leq i,j\leq N:\ (\beta_i\xrightarrow{k}\beta_j) \in E 
\Leftrightarrow \exists\ \alpha_k\in \Phi_+:\ \beta_i-\beta_j=\alpha_k.
\end{equation*}
We call this directed graph \textit{Hasse diagram} of $\Lg$ associated to $
\lambda$. For our further considerations $H(\Ln_{\lambda}^-)_{\Lg}$ is the most important tool. 
\begin{exa}\label{exaHdA4} The Hasse diagram $H(\mathfrak{n}_{\w_3}^-)_{\mathfrak{sl}_5}$ is given by:\end{exa}

\begin{center}
\begin{tikzpicture}
  \node (1) at (0,0) {$\beta_1$};
  \node (2) at (-1,-1) {$\beta_2$};
  \node (3) at (1,-1) {$\beta_3$};
  \node (4) at (-2,-2) {$\beta_4$};
  \node (5) at (0,-2) {$\beta_5$};
	\node (6) at (-1,-3) {$\beta_6$};
	\node (1) at (0,0) {$\beta_1$};
	\node (7) at (5,-0.35) {$\beta_1=(1,1,1,1)$};
	\node (8) at (5,-0.85) {$\beta_2=(0,1,1,1)$};
	\node (9) at (5,-1.35) {$\beta_3=(1,1,1,0)$};
	\node (10) at (5,-1.85) {$\beta_4=(0,0,1,1)$};
	\node (11) at (5,-2.35) {$\beta_5=(0,1,1,0)$};
	\node (12) at (5,-2.85) {$\beta_6=(0,0,1,0)$};
   \draw[->] (1) edge node[left, pos=0.2] {\tiny{1}} (2);
   \draw[->] (1) edge node[right, pos=0.2] {\tiny{4}} (3);
   \draw[->] (2) edge node[left, pos=0.2] {\tiny{2}} (4);
   \draw[->] (3) edge node[left, pos=0.2] {\tiny{1}} (5);
   \draw[->] (2) edge node[right, pos=0.2] {\tiny{4}} (5);
   \draw[->] (4) edge node[right, pos=0.2] {\tiny{4}} (6);
   \draw[->] (5) edge node[left, pos=0.2] {\tiny{2}} (6);  
\end{tikzpicture}
\end{center}
We define an ordered sequence of roots in $\rrl$: $(\beta_{i_1},\dots,\beta_{i_r})$ with $\beta_{i_j} \prec \beta_{i_{j+1}}$ to be a \textit{directed path} from $\beta_{i_1}$ to $\beta_{i_r}$. 
\begin{rem} For our purposes we want to allow the trivial path $(\emptyset)$ and any ordered subsequence of a directed path to be a directed path again. So in Example \ref{exaHdA4} $(\beta_1, \beta_2, \beta_4, \beta_6)$ and $(\beta_1, \beta_2, \beta_6)$ are two possible directed paths. 
\end{rem}
In general it is possible that two edges in $H(\Ln_{\lambda}^-)_{\Lg}$, one ending in a root $\beta$ and one starting in $\beta$, have the same label: 
\begin{equation*}
\gamma\xrightarrow{k}\beta\xrightarrow{k}\delta.
\end{equation*}
We call this construction a \textit{k-chain (of length 2)}.\\[2mm]
Associated to $H(\Ln_{\lambda}^-)_{\Lg}$ we construct two subsets $D_{\lambda},\overline{D}_{\lambda} \subset \prrl$ of the power set of $\rrl$: For $\bp \in \prrl$ we define
\begin{equation}\label{dyckpaths}
\bp \in D_{\lambda} :\Leftrightarrow \bp = \{\beta_{i_1},\dots,\beta_{i_r}\},  
\end{equation}
for a directed path $(\beta_{i_1},\dots,\beta_{i_r})$ in $H(\Ln_{\lambda}^-)_{\Lg}$.
So from now on by $\eqref{dyckpaths}$ we interpret $\bp \in D_{\lambda}$ as a directed path in $H(\Ln_{\lambda}^-)_{\Lg}$.
\begin{rem}\label{defdyck}
Let $\beta_i,\beta_j \in \rrl$ be arbitrary. Then there exist a $\bp \in D_{\lambda}$ with $\beta_i,\beta_j \in \bp$ if and only if $\beta_i-\beta_j$ or $\beta_j - \beta_i$ is a non-negative linear combination of simple roots.
\end{rem}
\begin{rem}\label{whydyck}
A staircase walk from (0,0) to (n,n) beyond the diagonal in a $n \times n$-lattice is a called Dyck path. In the general $\mathtt{A_n}$-case (\cite{FFoL11a}) the constructed directed paths are Dyck paths in this sense. To be consistent with their notation we call our directed paths $D_{\lambda}$ also Dyck paths.
\end{rem}
Further we define the set of $co$-$chains$ by
\begin{equation}\label{dstrich}
\overline{D}_{\lambda} := \{ \bps \in \prrl \mid |\bps \cap \bp| \leq 1,\, \forall\ \bp \in D_\lambda\}.
\end{equation}
If necessary we use an additional index $\dsl^{\,\text{type of }\Lg}$, to distinguish which type of $\Lg$ we consider. We want to consider the integral points of a polytope which is connected to $D_{\lambda}$ in a very natural way. Fix $\lambda=m\w_i$, with $m\in\Z_{\geq 0}$. Let
\begin{equation}\label{poly}
P(m\w_i)=\{ \textbf{x}\in \mathbb R_{\geq 0}^N \mid \sum_{\beta_j\,\in\, \bp}x_j\leq m,\hspace{2mm} \forall\ \bp\in D_{\w_i} \},
\end{equation}
be the \textit{associated polytope} to $D_{\w_i}$. Denote by $S(m\w_i)$ the integer points in $P(m\w_i)$: $S(m\w_i)=P(m\w_i)\cap \mathbb Z_{\geq 0}^N$. We define the map 
\begin{equation*}
\supp_1:S(\w_i) \rightarrow \mathcal{P}(\Delta_{+}^{\w_i}), \, \supp_1(\bs) = \{\beta_j \mid s_j > 0\}.
\end{equation*}
For $\bs \in S(\w_i)$ we have with $\eqref{dstrich}$ immediately $\supp_1(\bs) \in \overline{D}_{\w_i}$. Conversely every $\bps \in \overline{D}_{\w_i}$ has a non-empty pre-image. 
With $\bs \in \{0,1\}^N$ we conclude that $\supp_1$ is injective and that we have the immediate proposition:
\begin{prop}\label{bijSWDL}
The map $\supp_1:S(\w_i) \rightarrow \overline{D}_{\w_i}$ is a bijection. $\hfill \Box$
\end{prop}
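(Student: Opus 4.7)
The plan is to unpack both definitions and observe that the inequalities defining $P(\omega_i)$ collapse, for $m=1$, into the combinatorial condition defining $\overline{D}_{\omega_i}$.

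First I would check that $S(\omega_i) \subseteq \{0,1\}^N$. By the convention in the excerpt, every singleton $\{\beta_j\} \subset \rrl[\omega_i]$ is a (trivial) directed path in $H(\Ln_{\omega_i}^-)_\Lg$, hence an element of $D_{\omega_i}$. The defining inequality of $P(\omega_i)$ applied to this singleton path yields $s_j \leq 1$, and combined with $s_j \in \Z_{\geq 0}$ forces $s_j \in \{0,1\}$. Consequently $\bs \in S(\omega_i)$ is completely determined by its support $\supp_1(\bs)$, which gives injectivity.

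For surjectivity I would argue directly: given $\bps \in \overline{D}_{\omega_i}$, define $\bs \in \{0,1\}^N$ by $s_j = 1$ if $\beta_j \in \bps$ and $s_j = 0$ otherwise. For any $\bp \in D_{\omega_i}$ the inequality
\begin{equation*}
\sum_{\beta_j \in \bp} s_j = |\bps \cap \bp| \leq 1
\end{equation*}
holds by $\eqref{dstrich}$, so $\bs \in S(\omega_i)$, and by construction $\supp_1(\bs) = \bps$. Finally the well-definedness of $\supp_1$ as a map into $\overline{D}_{\omega_i}$ is exactly the observation the author has already isolated before the proposition: the polytope inequality for a path $\bp$ rewrites as $|\supp_1(\bs) \cap \bp| \leq 1$.

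There is no real obstacle here; the proof is a bookkeeping exercise, and the only thing worth highlighting is the remark that singleton Dyck paths force the $\{0,1\}$-valuation, which is the step that makes everything else automatic.
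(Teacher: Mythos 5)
Your proposal is correct and follows essentially the same route as the paper, whose "proof" consists precisely of the remarks preceding the proposition: well-definedness of $\supp_1$ via $\eqref{dstrich}$, surjectivity by taking the $\{0,1\}$-valued pre-image of a co-chain, and injectivity from $\bs \in \{0,1\}^N$. Your explicit note that singleton (trivial) Dyck paths force $s_j \leq 1$ just makes the paper's implicit step visible.
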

Hence in Section $\ref{uebergreifend2}$ it is sufficient to determine the co-chains in $\Hgl$ to find the elements in $S(\omega_i)$. Now we are able to formulate our main statements.
\subsection{Main statements}\label{main}
Let $\Lg$ be a simple complex finite-dimensional Lie algebra and $\lambda= m\omega_i$ be a rectangular weight, with $\langle \omega_i,\theta^{\vee}\rangle=1$ and $m\in \Z_{\geq 0}$, where $\theta$ is the highest root of $\Lg$. Further we assume that $\Hgw$ has no $k$-chains of length 2. In the following table we list up all cases where these assumptions are satisfied. Additionally in the cases $(\mathtt{B_n,\w_1}), (\mathtt{F_4,\w_4})$ and $(\mathtt{G_2,\w_1})$, we can rewrite $\Hgw$ in a diagram without $k$-chains of length 2:
\begin{center}
\begin{minipage}{\linewidth}
\centering
$\begin{array}{|c|c||c|c|}
\hline
\text{Type of $\Lg$}&\text{weight $\omega_i$}&\text{Type of $\Lg$}&\text{weight $\omega_i$}\\
\hline\hline
\mathtt{A_n}&\w_k,\ 1\leq k\leq n&\mathtt{E_6}&\w_1,\ \w_6\\
\hline
\mathtt{B_n}&\w_1,\ \w_n&\mathtt{E_7}&\w_7\\
\hline
\mathtt{C_n}&\w_1&\mathtt{F_4}&\w_4\\
\hline
\mathtt{D_n}&\w_1,\ \w_{n-1},\ \w_n&\mathtt{G_2}&\w_1\\
\hline
\end{array}$
\captionof{table}{Solved cases}
    \label{fundamentalweights}
  \end{minipage}
\end{center}
%
Let  $I(m\w_i) \subset \snm$ be the ideal such that $V(m\w_i)^{a}=\Sn / I(m\w_i).$
\begin{theoA}\label{theoremA}
\begin{equation*}
I(m\w_i)=\snm\left(\unp\circ\mathrm{span}\{f_{\beta}^{\langle m\w_i,\beta^{\vee}\rangle +1}\mid \beta\in \Delta_+\} \right).
\end{equation*}
\end{theoA}
\begin{proof}
This statement follows by Theorem \ref{thereomAA}. 
\end{proof}
\begin{theoB}
$\mathbb{B}_{m\w_i}=\{f^{\bs}v_{m\w_i} \mid \bs\in S(m\w_i) \}$ is a FFL basis of $V(m\w_i)^{a}$.
\end{theoB}
\begin{proof}
In Section \ref{helpful} we show that the polytope $P(m\w_i)$ is normal. By Theorem $\ref{thmspan}$ we conclude that $\mathbb{B}_{m\w_i}$ is a spanning set for $V(m\w_i)^{a}$. After fixing the order of the factors, with Theorem $\ref{theoremBB}$ we have a FFL basis of $V(m\w_i)$. Because this basis is monomial and $V(m\w_i) \cong V(m\w_i)^{a}$ as vector spaces, we conclude that $\mathbb{B}_{m\w_i}$ is a FFL basis of $V(m\w_i)^{a}$.
\end{proof}
\subsection{Applications}\label{applications} To state an important consequence of Theorem A and Theorem B we give the definitions of $essential$ $monomials$ due to Vinberg (see \cite{V05}, \cite{G11}) and $Feigin$-$Fourier$-$Littelmann$ (FFL) modules due to \cite{FFoL13a}. Let $\lambda$ be a dominant integral weight. Recall that we have a homogeneous lexicographical total order $\prec$ on the set of multi-exponents induced by the order on $\rrl$: $$\beta_1 \prec \beta_{2} \prec \dots \prec \beta_N.$$ In the following we fix a ordering on the factors in a vector
\begin{equation}\label{fixedorder}
f^{\bp}\vmw = f_{N}^{p_N}f_{N-1}^{p_{N-1}} \dots f_{1}^{p_1}\vmw.
\end{equation}
\begin{defi}
(i) We call a multi-exponent $\bp \in \Z_{\geq 0}^{N}$ $essential$ if 
\begin{equation*}
f^{\bp}\vmw \notin \mathrm{span}\{f^{\bq}\vmw \mid \bq \prec \bp\}.
\end{equation*}
(ii) Define $\es(\Vmw) \subset \Z_{\geq 0}^{N}$ to be the set of essential multi-exponents.

\end{defi}
By \cite[Section 1]{FFoL13a}
$\{f^{\bp}\vmw\mid \bp \in \es(\Vmw)\}$ is a basis of $\Vmwa$ and of $\Vmw$.
\vspace{2mm}

Let $M=\Un v_M$ and $M^{'}=\Un v_{M^{'}}$ be two cyclic modules. Then we denote with $M \odot M^{'} := \Un (v_{M}\otimes v_{M^{'}}) \subset M \otimes M^{'}$ the $Cartan$ $component$ and we write $M^{\odot n} := M \odot \cdots \odot M$ (n-times).

\begin{defi}
We call a cyclic module M a FFL module if:
\begin{enumerate}
\item[(i)] There exists a normal polytope $P(M)$ such that $\es(M) = S(M)$, where $S(M)$ is the set of lattice points in $P(M)$.
\item[(ii)] $\forall n \in \N: \dim M^{\odot n} = |n S(M)|$, where $n S(M)$ is the n-fold Minkowski sum of $S(M)$.
\end{enumerate}
\end{defi}
\begin{cor}
For the cases of Table \ref{fundamentalweights} $V(m\w_i)$ is a FFL module.
\end{cor}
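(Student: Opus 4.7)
The plan is to verify the two defining conditions of an FFL module, both of which follow from Theorem~B and the normality of $P(m\w_i)$ established in Section~\ref{helpful}.

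For condition (i) I take $P(M) := P(m\w_i)$. Theorem~B supplies the monomial basis $\mathbb{B}_{m\w_i} = \{f^{\bs}v_{m\w_i} \mid \bs \in S(m\w_i)\}$ of $V(m\w_i)^{a}$; combined with the fact from \cite[Section 1]{FFoL13a} that the essential monomials also form a basis, this forces $|\es(V(m\w_i))| = |S(m\w_i)|$, so it suffices to prove one inclusion. I would establish $\es(V(m\w_i)) \subseteq S(m\w_i)$: if $\bs \notin S(m\w_i)$, some Dyck path inequality $\sum_{\beta_j \in \bp} s_j > m$ with $\bp \in D_{\w_i}$ is violated, and applying to $f^{\bs} v_{m\w_i}$ a suitable element of $I(m\w_i)$ (a $\Unp$-translate of a power relation $f_\beta^{\langle m\w_i, \beta^\vee\rangle+1}$ coming from Theorem~A) rewrites it as a combination of $f^{\bs'}v_{m\w_i}$ with $\bs' \prec \bs$, so $\bs$ is not essential. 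Cardinality then promotes this inclusion to equality.

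For condition (ii) I would use the standard identification $V(m\w_i)^{\odot n} \cong V(nm\w_i)$, which holds because $v_{m\w_i}^{\otimes n}$ is an $\Ln^+$-annihilated vector of weight $nm\w_i$ in $V(m\w_i)^{\otimes n}$ and therefore generates an irreducible module of highest weight $nm\w_i$. Theorem~B applied to $nm\w_i$ then gives $\dim V(nm\w_i) = |S(nm\w_i)|$. The inequalities in \eqref{poly} scale linearly in $m$, so $P(nm\w_i) = n\,P(m\w_i)$, and the normality of $P(m\w_i)$ translates this into $S(nm\w_i) = n\,S(m\w_i)$ (the $n$-fold Minkowski sum of the lattice points). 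Combining, $\dim V(m\w_i)^{\odot n} = |n\,S(m\w_i)|$, as required.

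The main obstacle is the essentiality step in condition (i): concretely exhibiting, for each $\bs \notin S(m\w_i)$, an element of $I(m\w_i)$ that strictly lowers $\bs$ in the lex order requires unpacking the $\Unp$-action on the power relations from Theorem~A and controlling the leading term of the resulting identity. The Cartan component identification and the polytope scaling powering condition (ii) are by contrast essentially formal.
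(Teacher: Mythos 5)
Your proposal is correct and takes essentially the same route as the paper: condition (i) via Theorem~B plus the identification $S(m\w_i)=\es(V(m\w_i))$ (which the paper delegates to Lemma~\ref{lemequa}; your cardinality-plus-straightening sketch is exactly its content, and the rewriting step you flag as the "main obstacle" is already supplied by the spanning result, Theorem~\ref{thmspan}), and condition (ii) via $V(m\w_i)^{\odot n}\cong V(nm\w_i)$, Theorem~B applied to $nm\w_i$, and the normality/Minkowski property of the polytope.
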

\begin{proof}
Proposition $\ref{normalpolytope}$ shows that $P(m\w_i)$ is a normal polytope. By Theorem B a basis of $V(m\w_i)$ is given by $\B_{m\w_i}$, hence with Lemma $\ref{lemequa}$ we have $S(m\w_i) = \es(V(m\w_i))$.\\
Let $n \in \N$ be arbitrary, then $\dim V(m\w_i)^{\odot n} = \dim V(nm\w_i)$. Again by Theorem B we have $\dim V(nm\w_i)) = |S(nm\w_i))|$. Because $P(nm\w_i))$ is a normal polytope and therefore satisfies the Minkowski sum property, we conclude $|S(nm\w_i))| = |n S(m\w_i))|$.
\end{proof}
\begin{rem}
We note that in \cite{FFoL13a} the FFL modules are called favourable modules. 
\end{rem}

\section{Normal polytopes}\label{helpful}

Our goal in this section is to show, that the polytopes defined in $\eqref{poly}$ are normal.\\
A convex lattice polytope $P \subset \R^{K}, K \in \Z_{\geq 0}$, i.e. P is the convex hull of finitely many integer points, is called normal, if the set of integer points in the m-th dilation $mP$ is the m-fold Minkowski sum of the integer points in $P$.\\
To achieve our goal we will prove the normality condition for a larger class of polytopes in a more abstract setting than in Section $\ref{test}$.

%

\subsection{General setting}
Let $\Delta = \{z_1, z_2, \dots, z_K\}$ be a finite, non-empty set with a total order: $z_1 \succ z_2 \succ \dots \succ z_K.$
We extend $\succ$ to the (non-homogeneous) lexicographic order on $\mathcal{P}(\Delta)$, the power set of $\Delta$. Let $D=\{\bp_1,\dots,\bp_t\}\subset \prr$ be an arbitrary subset. 
\begin{rem} (i) To illustrate this non-homogeneous lexicographical order we give for $K \geq 3$ an example:
$$\{z_{1},z_{2}\} \succ \{z_1\} \succ \{z_2,z_3\}$$
(ii) Let $\bp=\{z_{i_1},\dots,z_{i_r} \}\in \prr$ be an arbitrary set. We always assume without loss of generality (wlog): $z_{i_1}\succ\cdots\succ z_{i_r}$. 
\end{rem} 
We can associate a collection of polytopes to $D$ in a natural way:
\begin{equation}\label{polytopm}
P(m)=\{ \mathbf{x}\in \R_{\geq 0}^{K} \mid \sum_{z_{j} \,\in\, \bp}x_{j}\leq m,\hspace{2mm} \forall \bp\in D \}, \,m \in \mathbb Z_{\geq 0} .
\end{equation}
To work with these polytope, in particular with the elements in $D$, we define the following.
\newpage
\begin{defi}\label{def1}\mbox{}
\begin{itemize}
	\item[(1)] For $\bp\in \prr$ define $\bp_{\mathrm{min}}=\underset{\succ}{\mathrm{min}}\{z \in \bp \}$ and  $\bp_{\maxx}$ analogously.
	\item[(2)] Let $\bp,\bq\in \prr$, $\bp=\{z_{i_1},\dots,z_{i_r} \}$, $\bq=\{z_{j_1},\dots,z_{j_s} \}$ with $\bp_{\minn}=\bq_{\maxx}$. Then we define the $concatenation$ of $\bp$ and $\bq$ 
by
\begin{equation*}
\bp \cup \bq=\{z_{i_1}, z_{i_2} \dots,z_{i_r}\!=\! z_{j_1}, z_{j_2}, \dots,z_{j_s} \}\in \prr.
\end{equation*}	 
	\end{itemize}
\end{defi}
\subsection{Normality condition}

\begin{defi}\label{helpfulldef}
Assume $D\subset \prr$ has the following properties:
\begin{enumerate}
	\item Subsets of elements in $D$ are again in $D$: 
	\begin{center}$\forall A\subset \bp\in D: A\in D$.\end{center}
	\item Every $z \in \Delta$ lies at least in one element of $D$:\begin{center}$\underset{\bp \,\in \, D}\bigcup \bp=\Delta$ \end{center}
	\item The concatenation of two elements in $D$, if possible, lies again in $D$:\begin{center} $\forall \bp,\bq\in D$ with $ \bp_{\minn}=\bq_{\maxx}$:
$\bp \cup \bq\in D$. \end{center}
\end{enumerate} 
Then we call $D\subset\prr$ a set of \textit{Dyck paths}. 
\end{defi}
We define for $m\in\Z_{\geq 0}$, $\supp_{\textrm{m}}: S(m)\to \prr,$ by $$ \bt = (t_z)_{z \in \Delta} \mapsto \supp_{\textrm{m}}(\bt)=\{z\in \Delta \mid\ t_{z}>0 \}.$$
Note that the map $\suppm$ is in general not injective. Furthermore we have $\supp_1(S(1))\subseteq\suppm(S(m))$, because of $S(1)\subseteq S(m)$ and $\suppm|_{S(1)}=\supp_1$.

\begin{rem}Let $D\subset\prr$ be a set of Dyck paths, then $P(m)$ defined in $\eqref{polytopm}$ is a bounded convex polytope for all $m \in \Z_{\geq 0}$.\\
By the definition of $P(m)$ and the second property of $D$, which guarantees that each $z \in \Delta$ lies in at least one Dyck path, we have $t_z \in \{0,1\}, \forall z \in \Delta$, for $\bt \in S(1)$. Hence $\supp_1$ is an injective map and we get an induced (non-homogeneous) total order on $S(1)$.
\end{rem}
Now we want to give a characterization of the image of $\supp_1$.
\begin{rem}
Let $D\subset\prr$ be a set of Dyck paths, then
\begin{equation}\label{charak}
\supp_1(S(1))=\{A\in \prr \mid |A\cap \bp|\leq 1, \forall \bp\in D \}=:\Gamma.
\end{equation}
"$\subseteq$": Assume there is an element $\bt\in S(1)$ with $\supp_1(\bt)=A\in \prr$ and $|A\cap \bp|> 1$ for some $\bp\in D$. Then we have $\sum_{z\in A\cap \bp}t_{z}>1$, since $t_{z}>0,\ \forall z\in A$. And so we have: $\sum_{z \in \bp}t_{z}>1$. But this is a contradiction to the assumption $\bt\in S(1)$.
\vspace{11pt}

"$\supseteq$": Let $B\in\Gamma$ be arbitrary. Associated to B we define $\bq^{B}\in \Z_{\geq 0}^{K}$ by $q_{z}^B=1$ if $z\in B$ and $q_{z}^B=0$ else. By the definition of $\Gamma$ we have for every Dyck path $\bp\in D$: $\sum_{z\in \bp} q_{z}^B\leq 1$. Hence $\bq^{B}\in S(1)$ with $\supp_1(\bq^B) = B$.

\end{rem}
Let $\bs \in S(m), m \in \Z_{\geq 0}, \bs \neq 0$ be an arbitrary non-zero element. Consider $\suppm(\bs)\in \prr$, we have $\mathcal{P}(\suppm(\bs))\subseteq \prr$. Let 
\begin{equation}\label{nabla}
\nabla =(\supp_1(S(1)) \cap \mathcal{P}(\supp_m(\bs))\subseteq \prr.
\end{equation}
Note that $\nabla$ is a total ordered, non-empty set, because $S(1)$ contains all unit vectors and $\bs \neq 0$ by assumption. So there is a unique maximal element (with respect to $\succ$), denoted by $\ms\in \nabla$.
\begin{lem}\label{hilfelem}
 Let $D$ be a set of Dyck paths, $\bs \in S(m)$ non-zero and $\mu \in M_{\bs}$. Then we have $s_\nu = 0$ for all $ \nu \in \Delta $ such that $(\nu \succ \mu \, \, \textrm{and} \, \,  \exists \bq \in  D\!: \nu,\mu \in \bq).$
\end{lem}
\begin{proof}
We assume the contrary. That means there exists $\nu \in \Delta$ with $\nu \succ \mu$, $s_\nu \neq 0$ and a Dyck path $\bp \in  D$ such that $\nu,\mu \in \bp$. Define 
\begin{equation*}
V := \{\tau \in M_{\bs} \mid \exists\, \bq \in  D: \nu, \tau \in \bq, \nu \succ \tau\} \subset M_{\bs}
\end{equation*}
and
$M'_{\bs} := (\{\nu\} \cup M_{\bs}) \setminus V.$
By assumption we have $\mu \in V$ and so $| V | \geq 1$. Further we have $M'_{\bs} \in \mathcal{P}(\supp_m({\bs}))$
and we want to show that $M'_{\bs} \in \supp_1(S(1))$.\\
We assume that this is not the case. So there exists some $\bb \in  D$ such that $| M_{\bs}' \cap \bb | > 1.$ By the definition of V this can only happen, if there exists a $\alpha \in M_{\bs}$ with $\alpha \succ \nu$ and $\alpha,\nu \in \bb$. The following picture is intended to give a better understanding of the foregoing situation.
\begin{center}
\begin{tikzpicture}
\node (1) at (0,0) {$\nu$};
\node (2) at (1,0) {$.$};
\node (3) at (-1,0) {$.$};
\node (4) at (1,1) {$\tau_1$};
\node (5) at (-1,-1) {$.$};
\node (6) at (2,0) {$.$};
\node (7) at (2,-1) {$\tau_2$};
\node (8) at (-2,-1) {$.$};
\node (9) at (-3,-1) {$\alpha$};
\node (10) at (3,0) {$\mu$};
\node (11) at (3.3,-0.05) {$,$};
\node (12) at (4.5,0.05) {$\tau_1, \tau_2 \in V.$};

\draw[->][thick] (5) edge node[right] {\bf b} (3);
\draw[->][thick] (3) edge (1);
\draw[->][thick] (1) edge (2);
\draw[->] (2) edge (4);
\draw[->][thick] (2) edge (6);
\draw[->] (6) edge (7);
\draw[->][thick] (9) edge (8);
\draw[->][thick] (8) edge (5);
\draw[->][thick] (6) edge node[above] {\bf p} (10);
\end{tikzpicture}
\end{center}
We can assume wlog that $\bb_{\min} = \nu$ and $\bp_{\max} = \nu$, because subsets of Dyck paths are again Dyck paths. So the concatenation $\bb \cup \bp \in  D$ is defined and we have $\alpha,\nu \in \bb \cup \bp$. But then, because of $\alpha,\nu \in M_{\bs}$:
$| M_{\bs} \cap \bb | > 1,$
which is a contradiction to $M_{\bs} \in \supp_1(S(1))$. \\
So for all $\bq \in  D$ we have $| M'_{\bs} \cap \bq | \leq 1$. By that and with $M'_{\bs} \in \prr$ we conclude $M'_{\bs} \in \supp_1(S(1)).$
Therefore $M'_{\bs} \in \nabla$ and by construction, because $\succ$ is a lexicographic order, $M'_{\bs} \succ M_{\bs}$, which is a contradiction to the maximality of $M_{\bs}$. So the assumption on the existence of $\nu$ was wrong, which proves the Lemma.
\end{proof}
\begin{prop}\label{helpfullprop}
Let $D\subset\prr$ be a set of Dyck paths, then we have for the integer points $S(m)$ of the polytopes $P(m)$ associated to $D$:
\begin{equation}\label{minksum}
S(m-1) + S(1)=S(m),\ \forall m\in \Z_{\geq 1}, 
\end{equation} where the left-hand side $(\mathrm{lhs})$ of $\mathrm{(\ref{minksum})}$ is the Minkowski sum of $S(m-1)$ and $S(1)$. 
\end{prop}
\begin{proof}
Let $m \geq 1$. From the definition of $P(m)$ and of the Minkowski sum follows $S(m-1) + S(1) \subset S(m)$. So it is sufficient to show that 
\begin{equation}
S(m-1) + S(1) \supset S(m)
\end{equation}
holds. For that let $\bs=(s_{z})_{z \in \Delta}\in S(m)\setminus S(m-1)$ be an arbitrary element. We show that there exists an integer point $\bt^1\in S(1) \setminus \{0\}$ such that: $\bs-\bt^1\in S(m-1)$. 
We define for $\ms$ defined as in $\eqref{nabla}$:
\begin{equation}\label{aleph}
\bt^1:=\supp_1^{-1}(\ms)\in S(1)\setminus \{0\}.
\end{equation} 
This element is unique because of the injectivity of $\supp_1$. Now we consider the integer point $\bs-\bt^1$. We know that there are no negative entries, because $s_{z}=0$ implies for all $A \in \nabla: z \notin A$ and so $t^1_{z}=0 $. Hence $\bs-\bt^1 \in S(m)$ and so the second step is to show that $\bs-\bt^1$ lies already in $S(m-1)$.\\
To achieve that we assume contrary $\bs - \bt^1 \in S(m) \setminus S(m-1)$, i.e. that there is a Dyck path $\bp\in D$ such that: $$\sum_{z \in \bp} (s_{z}-t_{z}^1)=m.$$ Since $\bs\in S(m)$ we have:
\begin{equation}\label{folg}
m=\sum_{z\, \in \, \bp} (s_{z}-t_{z}^1)= \underbrace{\sum_{z\, \in \, \bp} s_{z}}_{\leq\, m}-\underbrace{\sum_{z\, \in\, \bp}t_{z}^1}_{\geq\, 0}\Rightarrow \sum_{z\, \in\, \bp} s_{z}= m\ \mathrm{and}\  \sum_{z \,\in\, \bp}t_{z}^1=0.
\end{equation}
We want to construct another Dyck path $\overline{\bp}\in D$ such that $\sum_{z \in \overline{\bp}}\ s_{z}>m$.\\ Let $\beta\in \Delta$ be maximal with the property $\beta \in \bp \wedge s_{\beta}>0 $. In particular, since $\sum_{z \in \bp}(s_z - t_{z}^1)=m$ we have $\bp \cap \ms = \emptyset$ and so $\beta \notin \ms$. We define
\begin{equation*}
\bp'=\bp \setminus\{ \gamma\in \bp \mid \gamma\succ \beta\},
\end{equation*} 
which is an element of $D$ since subsets of Dyck paths are again Dyck paths.
By construction we have
$$\underset{z \, \in \,  \bp'}{\sum} s_z = m = \underset{z \, \in \, \bp}{\sum} s_z .$$
There are two possibilities to extend the path $\bp'$ with a further Dyck path $\bp'' \in  D$: 
\begin{equation*}
(i)\,\bp_{\min}'' = \beta \textrm{ or }(ii)\, \bp_{\max}'' = \bp_{\min}.
\end{equation*}
To obtain a path $\bps= \bp''\cup \bp'$ (respectively $\bps=\bp'\cup \bp''$) with $\sum_{z \in \overline{\bp}}\ s_{z}>m$, the extension $\bp''$ has to satisfy the following condition: $\bp'' \cap M_{\bs} \neq \emptyset$.\\[2mm] 
Assume we are in the case $(ii)$. Then there exists $\tau\in \bp'' \cap M_{\bs}$ with $s_\tau>0$. Further we have $s_\beta>0$ and $\tau,\beta\in\bp'\cup \bp''=\bps\in D$. By construction we have $\beta \prec \tau$ and so Lemma \ref{hilfelem} implies that $s_\beta=0$. This is a contradiction to $s_\beta>0.$\\[2mm] 
So we want to show the existence of a path $\bp'' \in D$ with condition $(i)$ and $\bp'' \cap \ms \neq \emptyset$. We assume contrary there is no such Dyck path $\bp''$: 
\begin{equation}\label{assumptiontwo}
\forall \bq \in  D \, \, \textrm{with} \, \, \bq_{\min} = \beta: \bq \cap \ms = \emptyset.
\end{equation}
Under this assumption and by using Lemma $\ref{hilfelem}$ we will show:
\begin{equation}\label{bla}
\forall \bq \in  D \, \, \textrm{with} \, \, \beta \in \bq: \bq \cap M_{\bs} = \emptyset.
\end{equation}
Assume $\eqref{bla}$ is not true, so there is some $\beta \neq \tau \in \bq \cap M_{\bs}$ for $\bq \in D$ with $\beta \in \bq$. Then we have two cases.\\
Let $\tau \succ \beta$, then $\tau$ and $\beta$ lie in $\bq$. Now the path from $\tau$ to $\beta$ is again a Dyck path. But this is a contradiction to Assumption $\eqref{assumptiontwo}$. \\
Let $\beta \succ \tau$, by $\tau \in \bq \cap M_{\bs}$ we have $t_\tau^1 \neq 0$. Then Lemma \ref{hilfelem} implies $s_\beta = 0,$ which is a contradiction to the choice of $\beta$.\\
Therefore $\eqref{bla}$ holds. Recall the properties of $M_{\bs}.$ We have 
\begin{equation*} 
M_{\bs} = \supp_{1}(\bt^{1})   \in \mathcal{P}(\Delta) \,\,\textrm{with}\,\, | M_{\bs} \cap \bq | \leq 1, \, \forall \bq \in  D.
\end{equation*}
Now consider $M_{\bs}' := M_{\bs} \cup \{\beta\} \in \mathcal{P}(\suppm(\bs)).$ We will show that $M_{\bs}' \in \supp_1(S(1)).$
For $\bq \in  D$ with $\beta \in \bq$ we have $| M_{\bs}' \cap \bq | = 1$ by $(\ref{bla})$.\\
For $\bq \in  D$ with $\beta \notin \bq$ we have $| M_{\bs}' \cap \bq | \leq 1$ by $| M_{\bs} \cap \bq | \leq 1$.\\
We conclude $M_{\bs}' \in \supp_1(S(1))$ and so
\begin{equation*}
M_{\bs}' \in \nabla = \supp_1(S(1)) \cap \mathcal{P}(\suppm(\bs)).
\end{equation*}
But with $M_{\bs}' \succ M_{\bs}$ we get a contradiction to the maximality of $M_{\bs}$.\\
So Assumption $\eqref{assumptiontwo}$ was wrong and there exists 
\begin{equation*}
\bp''  \in  D \, \, \textrm{with} \, \, \bp_{\min}'' = {\beta}: \bp'' \cap M_{\bs} \neq \emptyset.
\end{equation*}
We recall that $\beta \notin \ms$ and therefore $\tilde{\bp} \neq \{\beta\}.$
Define the concatenation of $\bp''$ and $\bp'$ in $\beta$ as $\overline{\bp} :=  \bp'' \cup \bp' \in  D$ which is indeed defined because $ \bp''_{\min} = \beta = \bp'_{\max}$. From Definition $\ref{helpfulldef}(3)$ we know that $\overline{\bp}$ is a Dyck path. Now by construction we conclude
$$ \underset{z \, \in \, \overline{\bp}}{\sum} s_z = {\underset{> \, 0}{\underbrace{\underset{z \, \in \,  \bp''}{\sum} s_z}}} + \underset{ = \, m}{\underbrace{\underset{z \, \in \, \bp'}{\sum s_z}}} > m.$$
But this is a contradiction to the choice of $\bs \in S(m)$ and the assumption\\ $\sum_{z\in \bp} (s_z - t_z^1) = m$ was wrong. We conclude $\bs - \bt^1 \in S(m-1)$ and with $\bt^1\in S(1)$ we have $\bs \in S(m-1) + S(1)$. Finally we get $S(m) \subset S(m-1) + S(1).$
\end{proof}
\subsection{Consequences}
We recall the construction of the Hasse diagram and the Dyck paths from Section $\ref{test}$ and show that we can apply Proposition $\ref{helpfullprop}$ to this setup. Let $\lambda = m\omega_i$ as before and we set $\Delta = \Delta_+^{\w_i}, D = D_{\w_i}$. Then we have for the associated polytopes: 
\begin{equation*}
P(m) = P(m\omega_i).
\end{equation*}
For $\rrl = \{\beta_1,\dots,\beta_N\}$ we chose in Section $\ref{test}$ the order $\beta_1 \prec \dots \prec \beta_N$. To apply Proposition $\ref{helpfullprop}$ we can use the same order on the positive roots and extend this order to the (non-homogeneous) lexicographical order on $\mathcal{P}(\Delta_+^{\w_i})$ as before. 
We want to show that the Dyck paths defined in Section $\ref{test}$ are Dyck paths in the sense of Definition $\ref{helpfulldef}$.\\
(1) Every $\bp' \subset \bp \in D_{\w_i}$ is again a Dyck path: We saw that any ordered subset of a directed path in $\Hgl$ is again a Dyck path.\\
(2) For each $\beta \in \Delta_+^{\w_i}$ there is at least one $\bp \in D_{\w_i}$ such that $\beta \in \bp$: The set of vertices in $\Hgl$ is exactly $\Delta_+^{\w_i}$. By construction we allow paths of cardinality one, so for example the path $(\beta)$ contains $\beta$.\\
(3) Let $\bp
,\bp'
\in D_{\w_i}$ be two Dyck paths, such that $\bp_{\min} = \bp'_{\maxx}$. Then there are directed paths $W,W'$ in $\Hgl$ realizing $\bp$ and $\bp'$ such that the end point of $W$  is equal to the starting point of $W'$. We consider the directed path, which we obtain by the concatenation of the directed paths $W$ and $W'$. This directed path realizes $\bp \cup \bp'$. Hence $\bp \cup \bp'$ lies in $D_{\w_i}$.\\ 
With Proposition $\ref{helpfullprop}$ we get immediately for $S(m\omega_i) = P(m\omega_i) \cap \Z_{\geq 0}^N, m \in \Z_{\geq 0}:$
\begin{prop}\label{normalpolytope} 
$S(m\omega_i) = S((m-1)\omega_i) + S(\omega_i), m \in \Z_{\geq 1}.\hfill \Box$
\end{prop}
Finally we conclude that the polytopes constructed in $\eqref{poly}$ are normal convex lattice polytopes.
%
\section{Spanning Property}\label{spanning}
\setcounter{subsection}{1}
Let $\Lg$ be a simple complex finite-dimensional Lie algebra, $\lambda =m\omega$ be a rectangular dominant integral weight such that $\langle \omega, \theta^{\vee} \rangle \, = 1$, where $\theta$ is the highest root in $\Delta_{+}$ and $m \in \Z_{\geq 0}$. In this section we show that $\Bl = \{f^{\bs}\vl \mid \bs \in S(\lambda)\}$ is a spanning set for $\Vla$. 
Recall that we have 
\begin{equation*}
\Vmwa \cong \Snml / I_{\lambda},
\end{equation*} where $I_{\lambda}$ is the annihilating ideal of $\vmw$. We know that $f_{\alpha}^{\langle \lambda, \alpha^{\vee} \rangle + 1 }\vmw$ is zero in $\Vmw$ (see $\eqref{wkd}$). Hence $f_{\alpha}^{\langle \lambda, \alpha^{\vee} \rangle + 1 }\vmw = 0$ in $\Vmwa$. By the action of $\Unp$ on $\Vmwa$ we obtain further relations. We will see that these relations are enough to rewrite every element as a linear combination of $f^{\bs} \vmw, \bs \in \smw$.\\
In our proof it is essential to have a Hasse diagram $H(\Ln_{\lambda}^-)_{\Lg}$ without $k$-chains. 
A Dyck path is defined as before to be the set of roots corresponding to a directed path in $H(\Ln_{\lambda}^-)_{\Lg}$.\\
Let $\circ$ be the action of $\Unp$ on $S(\mathfrak{g})$ induced by the adjoint action of $\mathfrak{n}^+$ on $\mathfrak{g}$. Via the isomorphism 
$S(\mathfrak{n}^-) \cong S(\mathfrak{g})/ S(\mathfrak{g}) (S_+(\mathfrak{n}^+ \! \oplus \mathfrak{h}))$
we obtain an action on $\Sn$, where $S_+(\mathfrak{n}^+ \! \oplus \mathfrak{h}) \subset S(\mathfrak{n}^+ \! \oplus \mathfrak{h})$ is the augmentation ideal. By
\begin{equation*}
\Snml \cong \Sn / \Sn(\textrm{span}\{f_\beta \mid \beta \in \Delta_{+}\setminus\Delta_{+}^{\lambda}\})
\end{equation*}
we get an action on $\Snml$. We denote this action again by $\circ$.
Since the action of $\Unp$ on $\Vmwa$ is induced by the action of $\Unp$ on $\Vmw$ (which is again induced by the adjoint action), we obtain that for all $e \in \Unp, f \in S(\mathfrak{n}_{\lambda}^-)$
\begin{equation}\label{circ}
e(f\vmw) = (e \circ f) \vmw,
\end{equation}
holds. Therefore we can restrict our further discussion on the $\Unp$-module $\Snml$. Equation $\eqref{circ}$ and $\Unp (f\vl) = \Unp(0) = \{0\}$ for all $f \in I_{\lambda}$ imply that $I_{\lambda}$ is stable under $\circ$. Furthermore, by Remark $\ref{degreeinvariance}$ 
the total degree of a monomial in $\Snml/I_{\lambda}$ is invariant or it is zero under $\circ$.
We denote as before $\rrl = \{\beta_1, \dots, \beta_N\}$ and use the same total order $\prec$ on the multi-exponents (resp. monomials) as defined in Section $\ref{test}$, which is induced by ${\beta_1 \prec \beta_2 \prec \dots \prec \beta_N}$.\\[2mm]
We define differential operators; for $\al, \beta \in \Delta_{+}$ let
\begin{equation*}
\partial_{\alpha} f_{\beta} :=
\begin{cases}
f_{\beta - \alpha}, \, &\textrm{if} \, \beta - \alpha \in \rrl\\
0, \,& \textrm{else.}
\end{cases}
\end{equation*} The operators satisfy
\begin{equation*}
\partial_\alpha f_\beta = c_{\alpha,\beta} [e_\alpha, f_\beta],
\end{equation*}
for constants $c_{\al,\beta} \in \C$. So instead of using $\circ$ we can work with these differential operators. 
We point out that we need the differential operators for arbitrary roots in $\Delta_+$.
\setcounter{thm}{0}
\begin{rem}
Here we want to illustrate the problem which occurs if we allow k-chains in our Hasse diagram. Let $\gamma\prec\beta\prec\delta$ the roots of a k-chain $\gamma\xrightarrow{k}\beta\xrightarrow{k}\delta$ and consider for $\ell \geq 2$:
\begin{equation}\label{probichain}
\partial_k^2f_{\gamma}^{\ell}= \partial_k (\ell f_{\beta}^1f_{\gamma}^{\ell-1})=\underbrace{c_0 \ell f_{\delta}^1f_{\beta}^0f_{\gamma}^{\ell-1}}_{\text{maximal monomial}}+\ c_1 \ell(\ell-1) f_{\beta}^2f_{\gamma}^{\ell-2},
\end{equation}
with $c_0 = c_{\gamma,\al_k}c_{\beta,\al_k}$ and $c_1= c_{\gamma,\al_k}^2$ where $c_{\gamma,\al_k}, c_{\beta,\al_k}$ are the structure constants corresponding to $[e_{\al_k},f_{\beta}]$ and $[e_{\al_k},f_{\gamma}]$ respectively.
So it is more involved to find a relation which contains $\beta$ and $\delta$.
\end{rem}
The next Lemma describes the action of the differential operators and gives an explicit characterization of the maximal monomial of $\partial_{\nu} f^{\bs}$ for certain $\nu \in \Delta_+$ and $\bs \in \Z_{\geq 0 }^N$.
\begin{lem}\label{diffoperatordescription} Assume $H(\Ln_{\lambda}^-)_{\Lg}$ has no $k$-chains.\\
(i)\, Let $\bp = \{ \beta_{i_1}, \dots, \beta_{i_r}\} \in D_{\lambda}$ with $\beta_{i_1} \prec \dots \prec \beta_{i_r}$ and $\nu \in \Delta_{+}$. Further let $\beta_{i_k}, k \leq r$ be maximal such that $\partial_{\nu}f_{\beta_{i_k}} \neq 0$. Let $\bs \in \Z_{\geq 0 }^N$ be a multi-exponent supported on $\bp$, i.e. $s_{\beta} = 0$ for $\beta \notin \bp$. Then the maximal monomial in $\partial_{\nu}^{\, l} f^{\bs} = \partial_{\nu}^{\, l}(f_{{i_1}}^{s_1} \dots f_{{i_r}}^{s_r})$, $l \leq	s_k$, is given by
\begin{equation*}
f_{{i_1}}^{s_1} \dots f_{{i_{k-1}}}^{s_{k-1}}(f_{{i_k-\nu}}^{l}f_{{i_k}}^{s_k-l}) f_{{i_{k+1}}}^{s_{k+1}} \dots f_{{i_r}}^{s_r}.
\end{equation*}
(ii)\, Let $\sum_{\bu \, \in \, \Z_{\geq 0}^N} c_{\bu} f^{\bu} \in S(\mathfrak{n}^-)$ and $\nu \in \Delta_+$. Let $\bh = \underset{\prec}{\max}\{\bu \mid \partial_{\nu} f^{\bu} \neq 0, c_{\bu} \neq 0\}$. Further let $\beta_k = \underset{\prec}{\max} \{ \beta \mid f_{\beta} \, \textrm{is a factor of} \, f^{\bu}, \partial_{\nu} f_\beta \neq 0,c_{\bu} \neq 0\}$ and assume $h_{\beta_k} > 0$. Then for $l  \leq h_{\beta_k}$ the maximal monomial in 
\begin{equation*}
\partial_{\nu}^{\,l} \sum_{\bu \, \in \, \Z_{\geq 0}^N} c_{\bu} f^{\bu} = \sum_{\bu \, \in \, \Z_{\geq 0}^N} c_{\bu} \partial_{\nu}^{\,l} f^{\bu}
\end{equation*}
appears in $\partial_{\nu}^{\,l} f^{\bh}$.
\end{lem}
\begin{proof}
$(i)$ Assume we have two roots $\beta_{i}, \beta_{j} \in \rrl$ with $\beta_{i} \prec\beta_{j}$ and $\beta_{i} - \nu$ and $\beta_{j} - \nu$ are again roots in $\rrl$. For $\beta_{i_l} - \nu \notin \rrl$ we have $\partial_{\nu} f_{\beta_{i_l}} = 0$, so we do not need to consider such roots $\beta_{i_l}\in\rrl$. So in order to prove $(i)$, because our monomial order is lexicographic, it is sufficient to show that 
\begin{equation}\label{orderinvariance}
\beta_i \prec \beta_j \Rightarrow \beta_i - \nu \prec \beta_j - \nu.
\end{equation}
If $\beta_i > \beta_j$ with respect to the standard partial order we have $\beta_i - \nu > \beta_j - \nu$ and therefore $\beta_i - \nu \prec \beta_j - \nu$, by the choice of the total order $\eqref{totalorder}$ on $\rrl$.\\ If the roots are not comparable with respect to the standard partial order, the second step is to compare the heights of the roots. So if $\height(\beta_i) > \height(\beta_j)$ then $\height(\beta_i - \nu) > \height(\beta_j - \nu)$ and again $\beta_i - \nu \prec \beta_j - \nu$.\\If $\height(\beta_i) = \height(\beta_j)$, we have to consider $\beta_i = (s_1,\dots,s_n)$ and $\beta_j = (t_1,\dots,t_n)$ in terms of the fixed basis of the simple roots (see Remark \ref{fixedbasis}). Then there is a $1 \leq k \leq n$, such that $s_k > t_k$ and $s_i = t_i$ for all $1 \leq i < k$. Let $\nu = (u_1,\dots,u_n)$, then $\beta_i - \nu = (s_1-u_1,\dots,s_n-u_n)$ is lexicographically greater than $\beta_j - \nu = (t_1 - u_1, \dots, t_n - u_n).$ Thus $\beta_i - \nu \prec \beta_j - \nu$ and $\eqref{orderinvariance}$ holds.\\[2mm]
$(ii)$ We only have to consider the multi-exponents $\bs \in \Z_{\geq 0}^N$ such that $\partial_{\nu} f^{\bs} \neq 0$. Now let $\bt$ be the maximal multi-exponent with this property and let $l \leq t_{\beta_k}$. Then we have $\partial_\nu^{\, l} f^{\bt} \neq 0$ and by (i) the maximal monomial appearing in $\partial_\nu^{\, l} f^{\bt}$ is 
\begin{equation}\label{maximalmonomial}
f_{\beta_k - \nu}^{\, l} f_{\beta_k}^{t_{\beta_k} - \, l} \underset{\beta \neq \beta_k - \nu}{\prod_{\beta \in \rrl, \beta \neq \beta_k}}  f_{\beta}^{t_{\beta}}.
\end{equation}
The observation $\eqref{orderinvariance}$ tells us that $f_{\beta_k - \nu} = \max\{f_{\beta - \nu} \mid \partial_{\nu} f_{\beta} \neq 0, s_{\beta} > 0 \}$. So by the choice of $\bt$ and because our order is lexicographic, the element $\eqref{maximalmonomial}$ is the maximal monomial in $\sum_{\bs \, \in \, \Z_{\geq 0}^N} c_{\bs} \partial_{\nu}^{\, l} f^{\bs}$.
\end{proof}
\begin{prop}\label{important} 
Assume $H(\Ln_{\lambda}^-)_{\Lg}$ has no $k$-chains and let $\bp \in D_{\lambda}$ be a Dyck path, $\bs \in \Z_{\geq 0}^{N}$ be a multi-exponent supported on $\bp$. Suppose further $\langle \lambda,\theta^{\vee}\rangle=m$ and $\underset{\al \, \in \, \bp}{\sum}{s_{\al}} > m.$ Then there exist constants $c_{\bt} \in \C$, $\bt \in \Z_{\geq 0}^N$ such that:
\begin{equation}\label{straighteninglaw}
f^{\bs} + \underset{\bt \, \prec \, \bs}{\sum}{c_{\bt}f^{\bt}} \in I_{\lambda}.
\end{equation}
\end{prop}
We follow an idea of \cite{FFoL11a,FFoL11b} who showed a similar statement in the cases $\mathfrak{sl}_n$ and $\mathfrak{sp}_n$ for arbitrary dominant integral weights.
\begin{proof}
Let $\bp = \{\tau_0,\tau_1, \dots, \tau_{r}\} \in D_{\lambda}$ be an arbitrary Dyck path. By construction we have for $1 \leq i \leq r$: $\tau_{i-1} \prec \tau_{i}$. Because $\sum_{i = 0}^{r}{s_{\tau_i}} > m$ we have
\begin{equation*}
f_{_{\theta}}^{{\text{\scriptsize{$s_{\tau_0}+ \dots + s_{\tau_{r}}$}}}} \in I_{\lambda}.
\end{equation*}
By the construction of the Hasse diagram there is a Dyck path $\bp' \in D_{\lambda}$ with $\bp \subset\bp'$, such that there is no path $\bp''$ with $\bp' \subsetneq \bp''$.
Hence we can assume wlog
\begin{equation*}
\bp = \{\tau_0 = \theta, \tau_1, \dots, \tau_{r-1}, \tau_{r} = \beta_N\}.
\end{equation*}
Let $\nu_{1}, \dots, \nu_{r} \in \rrp$, with $\nu_i\neq\nu_{i+1}$ be the labels at the edges of $\bp$.
We consider $f_{_{\theta}}^{^{\text{\scriptsize{$s_{\tau_0}+ \dots +s_{\tau_{r}}$}}}}\!\! \in I_{\lambda}$.
Because $I_{\lambda}$ is stable under $\circ$, we have for arbitrary $x_1, \, \dots, \, x_l \in \Delta_{+}$ and $f^{\bt} \in I_{\lambda}$:
$$
\partial_{x_1} \dots \partial_{x_l} f^{\bt} \in I_{\lambda}.
$$
We define 
\begin{equation}\label{A}
A := \partial_{_{\nu_r}}^{^{\text{\scriptsize{$s_{\tau_{r}}$}}}} \dots \, \partial_{_{\nu_2}}^{^{\text{\scriptsize{$s_{\tau_2}+ \dots +s_{\tau_{r}}$}}}}\partial_{_{\nu_1}}^{^{\text{\scriptsize{$s_{\tau_1}+ \dots +s_{\tau_{r}}$}}}} f_{_{\theta}}^{^{\text{\scriptsize{$s_{\tau_0}+ \dots +s_{\tau_{r}}$}}}} \in I_{\lambda}.
\end{equation}
\textbf{Claim:} There exist constants $c_{\bs} \neq 0, c_{\bt} \in \C, \bt \in \Z_{\geq 0}^N$ with $\bt \prec \bs$, such that: 
\begin{equation}\label{claim}
A = c_{\bs} f^{\bs} + \underset{{\bt} \, \prec \, \bs}{\sum} \, c_{\bt} f^{\bt} \in I_{\lambda}
\end{equation}
If the claim holds the Proposition is proven.\\[0.15cm]
$Proof$ $of$ $the$ $claim$. Now we need the explicit description of the Dyck paths given by the Hasse diagram.  Above we defined $\nu_{1}$ to be the label at the edge $\theta \overset{\nu_1}{\longrightarrow} \tau_1$ in $H(\Ln_{\lambda}^-)_{\Lg}$. Because we assumed that $H(\Ln_{\lambda}^-)_{\Lg}$ has no $\nu_1$-chains of length 2, there is no edge labeled by $\nu_{1}$ starting in the vertex $\theta - \nu_{1} = \tau_1$. That means $\partial_{_{\nu_1}}^{^{}} f_{_{\theta- \nu_{1}}}^{^{}} = 0$. Therefore we obtain
\begin{equation*}\label{stepone}
\partial_{_{\nu_1}}^{^{\text{\scriptsize{$s_{\tau_1}+ \dots +s_{\tau_{r}}$}}}} f_{_{\theta}}^{^{\text{\scriptsize{$s_{\tau_0}+ \dots +s_{\tau_{r}}$}}}} = a_0 \, f_{_{\theta}}^{^{\text{\scriptsize{$s_{\tau_0}$}}}} f_{_{\theta -  \nu_{1}}}^{^{\text{\scriptsize{$s_{\tau_1}+ \dots +s_{\tau_{r}}$}}}} \in I_{\lambda}
\end{equation*}
for some constant $a_0 \in \C\setminus\{0\}.$ 
Now $\nu_{2}$ is the label at the edge between the vertices $\tau_1$ and $\tau_2$. Again there is no $\nu_2$-chain in $H(\Ln_{\lambda}^-)_{\Lg}$, so $\partial_{\nu_2} f_{_{\theta -  \nu_{1} - \nu_{2}}} = 0$ and $\partial_{\nu_2} f_{_{\theta - \nu_{2}}} = 0$, so we have for $k = \min\left\{s_{\tau_0},s_{\tau_2}+ \dots + s_{\tau_{r}}\right\}$, $b_q \in \C\setminus\{0\}$:
\begin{equation}\label{smallerterms}
\begin{aligned}
&\partial_{_{\nu_2}}^{^{\text{\scriptsize{$s_{\tau_2}+ \dots +s_{\tau_{r}}$}}}} a_0 f_{_{\theta}}^{^{\text{\scriptsize{$s_{\tau_0}$}}}} f_{_{\theta -  \nu_{1}}}^{^{\text{\scriptsize{$s_{\tau_1}+ \dots +s_{\tau_{r}}$}}}}=\\
&b_0\,f_{_{\theta}}^{^{\text{\scriptsize{$s_{\tau_0}$}}}} f_{_{\theta -  \nu_{1}}}^{^{\text{\scriptsize{$s_{\tau_1}$}}}}f_{_{\theta -  \nu_{1} - \nu_{2}}}^{^{\text{\scriptsize{$s_{\tau_2}+ \dots +s_{\tau_{r}}$}}}} + \overset{k}{\underset{q \, = \, 1}{\sum}}b_q f_{_{\theta}}^{^{\text{\scriptsize{$s_{\tau_0} \!- q$ }}}} \!\! f_{_{\theta -  \nu_{1}}}^{^{\text{\scriptsize{$s_{\tau_1}\! + q$}}}}f_{_{\theta -  \nu_{1} - \nu_{2}}}^{^{\text{\scriptsize{$s_{\tau_2}+ \dots +s_{\tau_{r}} \!- \! q$}}}}f_{_{\theta - \nu_{2}}}^{^{\text{\scriptsize{$q$}}}}\!.
\end{aligned}
\end{equation}
For our purposes, we do not need to pay attention to the scalars unless they are zero. We also notice that the terms of the sum are only non-zero, if $\theta-\nu_{2} \in\rrl$.\\
The first part of Lemma $\ref{diffoperatordescription}$ implies, that the monomial $f_{_{\theta}}^{^{\text{\scriptsize{$s_{\tau_0}$}}}} f_{_{\theta -  \nu_{1}}}^{^{\text{\scriptsize{$s_{\tau_1}$}}}}f_{_{\theta -  \nu_{1} - \nu_{2}}}^{^{\text{\scriptsize{$s_{\tau_2}$+ \dots +$s_{\tau_{r}}$}}}}$ is the largest (with respect to $\prec$) in $\eqref{smallerterms}$, because $\theta \prec \theta - \nu_1 \prec \theta - \nu_1 - \nu_2.$\\
By construction $\partial_{\nu_{i+1}} f_{\theta -  \nu_{1} - \nu_{2} - \dots - \nu_{i}}\neq 0$, because $\theta -  \nu_{1} - \nu_{2} - \dots - \nu_{i} - \nu_{i+1}$ is an element of $\Delta_{+}^{\lambda}$, for $i < r.$ So the second statement of Lemma $\ref{diffoperatordescription}$ implies that the largest element is obtained by acting in each step on the largest root vector. To be more precise, we consider the following equations:
\begin{equation*}
\begin{aligned}
\partial_{_{\nu_r}}^{^{\text{\scriptsize{$s_{\tau_{r}}$}}}}\dots \, \partial_{_{\nu_2}}^{^{\text{\scriptsize{$s_{\tau_2}+ \dots +s_{\tau_{r}}$}}}}\partial_{_{\nu_1}}^{^{\text{\scriptsize{$s_{\tau_1}+ \dots +s_{\tau_{r}}$}}}} f_{_{\theta}}^{^{\text{\scriptsize{$s_{\tau_0}+ \dots +s_{\tau_{r}}$}}}}
&=\\
a_0 \,\partial_{_{\nu_r}}^{^{\text{\scriptsize{$s_{\tau_{r}}$}}}}\dots \, \partial_{_{\nu_2}}^{^{\text{\scriptsize{$s_{\tau_2}+ \dots +s_{\tau_{r}}$}}}}
f_{_{\theta}}^{^{\text{\scriptsize{$s_{\tau_0}$}}}} f_{_{\theta -  \nu_1}}^{^{\text{\scriptsize{$s_{\tau_1}+ \dots +s_{\tau_{r}}$}}}}
&=\\
b_0 \, \partial_{_{\nu_r}}^{^{\text{\scriptsize{$s_{\tau_{r}}$}}}}\dots \, \partial_{_{\nu_3}}^{^{\text{\scriptsize{$s_{\tau_3}+ \dots +s_{\tau_{r}}$}}}}
f_{_{\theta}}^{^{\text{\scriptsize{$s_{\tau_0}$}}}} f_{_{\theta -  \nu_1}}^{^{\text{\scriptsize{$s_{\tau_1}$}}}}f_{_{\theta -  \nu_1 - \nu_2}}^{^{\text{\scriptsize{$s_{\tau_2}+ \dots +s_{\tau_{r}}$}}}}
&+ \sum{\text{smaller monomials}} =\\[-0.3cm]
&\begin{array}{l}
.\\[-0.1cm]
.\\[-0.1cm]
.\\[-0.1cm]
\end{array}\\[-0.1cm]
b'_0 \, f_{_{\theta}}^{^{\text{\scriptsize{$s_{\tau_0}$}}}} f_{_{\theta -  \nu_1}}^{^{\text{\scriptsize{$s_{\tau_1}$}}}}f_{_{\theta -  \nu_1 - \nu_2}}^{^{\text{\scriptsize{$s_{\tau_2}$}}}} \dots f_{_{\theta - \nu_1 - \nu_2 - \dots - \nu_r}}^{^{\text{\scriptsize{$s_{\tau_{r}}$}}}} &+ \sum{\text{smaller monomials}} \in I_{\lambda}.\\
\end{aligned}
\end{equation*}
for some $b'_0 \in \C\setminus\{0\}$. But the last term is exactly what we wanted to obtain, so for constants $c_{\bt} \in \C$, $c_{\bs} \in \C\setminus\{0\}$ we have by assumption that $s_{\alpha} = 0$ if $\al \notin \bp$:
\begin{equation*}
\begin{aligned}
\partial_{_{\nu_r}}^{^{\text{\scriptsize{$s_{\tau_{r}}$}}}}\dots \, \partial_{_{\nu_2}}^{^{\text{\scriptsize{$s_{\tau_2}+ \dots +s_{\tau_{r}}$}}}}\partial_{_{\nu_1}}^{^{\text{\scriptsize{$s_{\tau_1}+ \dots +s_{\tau_{r}}$}}}} f_{_{\theta}}^{^{\text{\scriptsize{$s_{\tau_0}+ \dots +s_{\tau_{r}}$}}}}
&=\\
c_{\bs}f_{_{\theta}}^{^{\text{\scriptsize{$s_{\tau_0}$}}}} f_{_{\tau_1}}^{^{\text{\scriptsize{$s_{\tau_1}$}}}}f_{_{\tau_2}}^{^{\text{\scriptsize{$s_{\tau_2}$}}}} \dots f_{_{\tau_{r}}}^{^{\text{\scriptsize{$s_{\tau_{r}}$}}}} + \underset{\bt \, \prec \, \bs}{\sum} \, c_{\bt} f^{\bt} &= \\
c_{\bs}f^{\bs} + \underset{\bt \, \prec \, \bs}{\sum} \, c_{\bt} f^{\bt} &\in I_{\lambda}.
\end{aligned}
\end{equation*}
\end{proof}
\begin{thm}\label{thmspan} 
The set  \{$f^{\bs}v_{\lambda} \mid \bs \in \smw\}$ spans the module $\Vmw^a$.
\end{thm}
\begin{proof}
Let $m \in \Z_{\geq 0}$ and $\bt \in \Z_{\geq 0}^N$ with $\bt \notin \smw$. That means there exists a Dyck path $\bp \in D_{\lambda}$ such that $\underset{\beta \, \in \, \bp}{\sum} t_\beta > m.$ Define a new multi-exponent $\bt'$ by
\begin{equation*}
t_{\beta}' :=
\begin{cases}
t_{\beta}, \, &\textrm{if} \, \beta \in \bp,\\
0, \,& \textrm{else.}
\end{cases}
\end{equation*}
Because of $\underset{\beta \, \in \, \bp}{\sum} t_\beta' = \underset{\beta \, \in \, \bp}{\sum} t_\beta > m$ we can apply
Proposition $\ref{important}$
to $\bt'$ and get 
\begin{equation*}
f^{\bt'} = \underset{\bs' \, \prec \, \bt'}{\sum} c_{\bs'}f^{\bs'} \in \Snml/I_{\lambda},
\end{equation*}
for some $c_{\bs'} \in \C$. Because the order of the factors of $f^{\bt} \in \Snml$ is arbitrary and since we have a monomial order, we get 
\begin{equation}\label{algorithm}
f^{\bt} = f^{\bt'} \underset{\beta \, \notin \, \bp}{\prod} f_{\beta}^{t_\beta} = \underset{\bs \, \prec \, \, \bt}{\sum} c_{\bs} f^{\bs} \in \Snml/I_{\lambda},
\end{equation}
where $c_{\bs} = c_{\bs'}$ and $f^{\bs} = f^{\bs'}\prod_{\beta\,\notin\, \bp}f_{\beta}^{s_\beta}.$ Equation $\eqref{algorithm}$ shows that we can express an arbitrary multi-exponent as a sum of strictly smaller multi-exponents. We repeat this procedure until all multi-exponents in the sum lie in $\smw$. There are only finitely many multi-exponents of a fixed degree and the degree is invariant or zero under the action $\circ$. So after a finite number of steps, we can express $\bt$ in terms of $\br \in \smw$ for some $c_{\br} \in \C$:
\begin{equation*}
f^{\bt} = \underset{\br \, \in \, \smw}{\sum} c_{\br} f^{\br} \in \Snml/ I_{\lambda}.
\end{equation*}
\end{proof}
\begin{cor}\label{Spanvmw} Fix for every $\bs \in \smw$ an arbitrary ordering of the factors $f_\beta$ in the product $\prod_{\beta \, > \, 0} f_{\beta}^{s_\beta} \in \Snml.$ Let $f^{\bs} =  \prod_{\beta \, > \, 0}^{} f_{\beta}^{s_\beta} \in \Un$ be the ordered product. Then the elements $f^{\bs}v_{\omega}, \bs \in \smw$ span the module $\Vmw$.
\end{cor}
\begin{proof}
Let $f^{\bt}\vmw \in \Vmw$ with $\bt \in \Z_{\geq}^N$ arbitrary. We consider $f^{\bt}\vl$ as an element in $\Vmwa$. By Theorem $\ref{thmspan}$ we get
\begin{equation*}
f^{\bt}\vmw = \underset{\bs \, \in \, \smw}{\sum} c_{\bs}f^{\bs}\vmw \, \,\text{in} \, \, \Vmwa.
\end{equation*}
The ordering of the factors in a product in $\Snml$ is irrelevant, so we can adjust the ordering of the factors to the fixed ordering and get an induced linear combination:
\begin{equation*}
f^{\bt}\vmw = \underset{\bs \, \in \, \smw}{\sum} c_{\bs}f^{\bs}\vmw \, \,\text{in} \, \, \Vmw.
\end{equation*} 
\end{proof}
\section{FFL Basis of \texorpdfstring{$V(\omega)$}{V(w)}}\label{uebergreifend2}
Throughout this section we refer to the definitions in Subsection $\ref{Definitions}$. In this section we calculate explicit FFL bases of the highest weight modules $\Vw$, where $\omega$ occurs in Table $\ref{fundamentalweights}$. We will do this by giving characterizations of the co-chains $\overline{\bp}\in \dsw$ (see (\ref{dstrich})) and using the one-to-one correspondence 
between $\dsw$ and $S(\omega)$ (see Proposition $\ref{bijSWDL}$).\\ The results of this section, i.e. $\B_\omega = \{f^\bs\vw \mid \bs \in S(\omega)\}$ is a FFL basis of $\Vw$, provide the start of an inductive procedure in the proof of Theorem $\ref{theoremBB}$. With Proposition \ref{helpfullprop} we will be able to give an explicit basis of $V(m\omega)$, $m\in\mathbb \Z_{\geq 0}$, parametrized by the $m$-th Minkowski sum of $S(\w)$.

\subsection{Type \texorpdfstring{$\mathtt{A_n}$}{An}}
Let $\Lg$ be a simple Lie algebra of type $\mathtt{A_n}$ with $n\geq1$ and the associated Dynkin diagram
\begin{center}
\begin{tikzpicture}[scale=.4]
    \draw (-1,0) node[anchor=east]  {$\mathtt{A_n}$};
    \foreach \x in {0,...,4}
    \draw[thick,xshift=\x cm] (\x cm,0) circle (1 mm);
    \foreach \y in {0,...,2}
    \draw[thick,xshift=\y cm] (\y cm,0) ++(.3 cm, 0) -- +(14 mm,0);

    \draw (0,0) node[anchor=north]  {\tiny{1}};
    \draw (2,0) node[anchor=north]  {\tiny{2}};
    \draw (4,0) node[anchor=north]  {\tiny{3}};
    \draw (6,0) node[anchor=north]  {\tiny{4}};
    \draw (8,-0.1) node[anchor=north]  {\tiny{n}};
    \draw[thick,dashed] (6.3,0)--(7.7,0);
  \end{tikzpicture}
\end{center}
The highest root is of the form $\theta=\sum_{i=1}^n \alpha_i$.
Since  a Lie algebra $\mathfrak{g}$ of type $\mathtt{A_n}$ is simply laced  we have $\theta^{\vee}= \sum_{i=1}^n \alpha_i^{\vee}$ and so $\langle \omega,\theta^{\vee}\rangle=1\Leftrightarrow \omega\in \{\wk\mid\ 1\leq k\leq n\}$. The positive roots of $\mathfrak{g}$ are described by: $\Delta_+=\{\alpha_{i,j}=\sum_{l=i}^j \al_l \mid 1\leq i\leq j\leq n \}$. So for the roots corresponding to $\Ln_{\w_k}^-$ we have:
\begin{equation}\label{relrootsAn}
\rrk=\{\alpha_{i,j}\in \Delta_+|\ 1\leq i\leq k\leq j\leq n\}\subset \Delta_+.
\end{equation}
Before we define the total order on $\rrk$, we define a total order on $\Delta_+$:
\begin{eqnarray*}
  &\beta_1=\al_{1,n},&\\
  &\beta_2=\al_{2,n},\ \beta_3=\al_{1,n-1},&\\
	&\beta_4=\al_{3,n},\ \beta_5=\al_{2,n-1},\ \beta_6=\al_{1,n-2},&\\
  &\cdots,&\\
	&\beta_{n(n-1)/2+1}=\al_{n},\ \beta_{n(n-1)/2+2}=\al_{n-1},\cdots,\ \beta_{n(n+1)/2}=\al_{1}.&
\end{eqnarray*}
Now we delete every root $\beta_i\in\Delta_+\setminus\rrk$ and relabel the remaining roots. 
For an example of this procedure see Appendix, Figure \ref{hasseA4} and Example \ref{exaHdA4}. In the following it is more convenient to use the description $\al_{i,j}$ instead of $\beta_k$. First we give a characterization of the co-chains $\bps\in\dswk\subset\prk$.
\begin{prop}
Let be $\bps=\{\alpha_{i_1,j_1},\dots, \alpha_{i_s,j_s}\}\in\prk$ arbitrary, then:
\begin{equation}\label{dspropAn}
\bps\in\dswk \Leftrightarrow \forall \alpha_{i_l,j_l},\alpha_{i_m,j_m}\in \bps,\ i_l\leq i_m: i_l<i_m \leq k \leq j_l<j_m.
\end{equation}
Further we have: $\bps\in\dswk \Rightarrow s\leq \mathrm{min}\{k,n+1-k\}$.
\end{prop}
\begin{proof}
First we prove $\eqref{dspropAn}$: ``$\Leftarrow$'': Let $\bps=\{\alpha_{i_1,j_1},\dots, \alpha_{i_s,j_s}\}\in\prk$ be an element with the properties of the right-hand side (rhs) of (\ref{dspropAn}). Let $\ail,\aim\in \bps$, with $i_l<i_m$. Consider now:
\begin{equation*}
\ail-\aim= \sum_{r=i_l}^{j_l}\alpha_r-\sum_{r=i_m}^{j_m}\alpha_r=\sum_{r=i_l}^{i_m-1}\alpha_r-\sum_{r=j_l+1}^{j_m}\alpha_r.
\end{equation*}
Since $j_l<j_m$ holds, Remark \ref{defdyck} implies that there is no Dyck path $\bq\in D_{\omega_k}$ such that $\aim$ and $\ail$ are contained in $\bq$.
\vspace{2mm}

``$\Rightarrow$'': Let be $\bps\in \dswk$ and $\ail,\aim\in \bps$ with $\ail\neq \aim$. Further we have $i_l \leq j_l, i_m \leq j_m$. Assume wlog $i_m = j_m$, then $\al_{i_m,j_m} = \al_{k}$ and $i_l < j_l$. Hence
\begin{equation*}\label{step0}
\alpha_{i_l,j_l}-\alpha_{k}= \sum_{r=i_l}^{k-1}\alpha_r + \sum_{r=k+1}^{j_l}\alpha_r,
\end{equation*}
which is a contradiction to $\bps\in\dswk$ by Remark \ref{defdyck}. So $i_l < j_l, i_m < j_m$ and we assume wlog $i_l\leq i_m$.

\vspace{2mm}

\textbf{1. Step:} $i_l=i_m=:y$. Set $x=\min\{j_l,j_m\}$ and $\overline{x}=\max\{j_l,j_m\}$:
\begin{equation*}
\alpha_{y,\overline{x}}-\alpha_{y,x}= \sum_{r=y}^{\overline{x}}\alpha_r-\sum_{r=y}^{x}\alpha_r=\sum_{r=x+1}^{\overline{x}}\alpha_r.
\end{equation*}
Again this contradicts to $\bps\in\dswk$. Hence we have: $i_l<i_m$.
\vspace{2mm}

\textbf{2. Step:} $(i_l<i_m)\wedge (j_l=j_m=:x)$:
\begin{equation*}\label{step2}
\alpha_{i_l,x}-\alpha_{i_m,x}= \sum_{r=i_l}^{x}\alpha_r-\sum_{r=i_m}^{x}\alpha_r=\sum_{r=i_l}^{i_m-1}\alpha_r.
\end{equation*}
We conclude: $j_l\neq j_m$.
\vspace{2mm}

\textbf{3. Step:} $(i_l<i_m<j_m)\wedge (i_l<j_l)$. So there are three possible cases:
\begin{center}
(a) $i_l<j_l<i_m<j_m$, (b) $i_l<i_m<j_l<j_m$ and (c) $i_l<i_m<j_m<j_l$.
\end{center}
The case (a) can not occur because $k\leq j_l<i_m\leq k$ is a contradiction.
So let us assume $\ail,\aim$ satisfy the case (c), then we have:
\begin{equation*}
\alpha_{i_l,j_l}-\alpha_{i_m,j_m}= \sum_{r=i_l}^{j_l}\alpha_r-\sum_{r=i_m}^{j_m}\alpha_r=\sum_{r=i_l}^{i_m-1}\alpha_r+\sum_{r=j_m}^{j_l}\alpha_r.
\end{equation*}
Finally we conclude that for two arbitrary roots $\ail,\aim\in \bps\in \dswk$ with $i_l\leq i_m$ we have: $i_l<i_m<j_l<j_m$.\\[2mm] It remains to show that the cardinality $s$ of $\bps$ is bounded by $\min\{k,n+1-k\}$:
\vspace{2mm}

\textbf{1. Case:} $\min\{k,n+1-k\}=k$. Let $\air\in \bps$ be an arbitrary root in $\bps$. Then we know from (\ref{relrootsAn}) $1\leq i_r\leq k$. But we also know that for any two roots $\ail,\aim\in \bps$ we have $i_l\neq i_m$. So there are at most $k$ different roots in $\bps$.
\vspace{2mm}

\textbf{2. Case:} $\min\{k,n+1-k\}=n+1-k$. For two roots ${\ail,\aim\in\bps}$ we have $j_l\neq j_m$ and $k\leq j_l,j_m\leq n$. So the number of different roots in $\bps$ is bounded by $n+1-k$.
\vspace{2mm}

Finally we conclude: $|\bps|=s\leq \min\{k,n+1-k\}$.
\end{proof}
\begin{rem}\label{remprop}
Let $\overline{\bp}=\{\alpha_{i_1,j_1},\dots, \alpha_{i_s,j_s}\} \in \dswk$ then $\eqref{dspropAn}$ implies $$i_1 < i_2 < \dots < i_s \leq k \leq j_1 < j_2 < \dots < j_s.$$
Assume wlog $k = j_1 = j_2$, then there is Dyck path containing $\al_{i_1,j_1}$ and $\al_{i_2,j_2}$, because $\al_{i_1,j_1} - \al_{i_2,j_2} = \al_{i_1,i_2-1} \in \Delta_+$.
\end{rem}
Because of Corollary \ref{Spanvmw} we know that the elements $\{f^{\bs}\vwk \mid \bs \in S(\omega_k)\}$ span $V(\omega_k)$ and by Proposition $\ref{bijSWDL}$ there is a bijection between $S(\omega_k)$ and $\dswk$.
We want to show that these elements are linear independent. To achieve that we will show that $|\dswk|=\dim\Vwk$. To be more explicit:
\begin{prop}\label{helpAn}
For all $1\leq k\leq n$ we have: $|\dswk|=\dim\Vwk=\binom{n+1}{k}$.
\end{prop}
\begin{proof} Let $V(\w_1)$ be the vector representation with basis $\{e_1,e_2,\dots,e_{n+1}\}$. Then $\bigwedge^k V(\omega_1)$ is a $U(\Lg)$-representation with $v_{\w_k}=e_1 \wedge e_2 \wedge \dots \wedge e_k$:
\begin{equation}\label{wedge}
f_{\al_{i_1,j_1}}v_{\w_k} = e_1 \wedge \dots \wedge e_{i_1 - 1} \wedge e_{j_1 + 1} \wedge e_{i_1 + 1} \wedge \dots \wedge e_k,
\end{equation}
and we have $\bigwedge^k V(\omega_1) \cong V(\w_k)$. We define $f_{\bps}v_{\w_k}:=f_{\al_{i_1,j_1}}f_{\al_{i_2,j_2}} \dots f_{\al_{i_m,j_m}}v_{\w_k}$ for $\bps=\{\al_{i_1,j_1},\al_{i_2,j_2}, \dots, \al_{i_m,j_m}\} \in \dswk$ and claim that the set $\{f_{\bps}v_{\w_k}\mid \bps\in \dswk\}$ is linear independent in $\bigwedge^k V(\omega_1)$. If the claim holds we have $|\dswk| \leq \dim V(\w_k)$ and with Corollary $\ref{Spanvmw}$ we conclude that $|\dswk| = \dim V(\w_k)=\binom{n+1}{k}$.\\[2mm]
$Proof$ $of$ $the$ $claim$. Assume we have $\bps_1 = \{\al_{i_1,j_1},\al_{i_2,j_2}, \dots, \al_{i_m,j_m}\}$ and $\bps_2=\{\al_{s_1,t_1},\al_{s_2,t_2}, \dots, \al_{s_{\ell},t_{\ell}}\}$ in $\dswk$ with linear dependent images under the action $\eqref{wedge}$, i. e. $f_{\bps_1}v_{\w_k}=\pm f_{\bps_2}v_{\w_k}$. Then we have $m=\ell$, $\{j_1,\dots,j_m\}=\{t_1,\dots,t_\ell\}$ and we can assume wlog: $m=k=\ell$. Hence: $f_{\bps_1}v_{\w_k}=e_{j_1}\wedge\dots\wedge e_{j_m}=\pm f_{\bps_2}v_{\w_k}$, with Remark \ref{remprop} we conclude $\bps_1=\bps_2$.  
\end{proof}
\begin{exa} The non-redundant inequalities of the polytope $P(m\omega_3)$ in the case $\Lg = \mathfrak{sl}_{5}$ are:
\begin{equation*}
P(m\w_3)=\left\{\bx\in\R^6_{\geq 0}\mid
\begin{aligned}
&x_1+x_2+x_4+x_6\leq m\\
&x_1+x_2+x_5+x_6\leq m\\
&x_1+x_3+x_5+x_6\leq m
\end{aligned}
\right\}.
\end{equation*}
Example \ref{exaHdA4} shows the corresponding Hasse diagram $H(\Ln_{\w_3}^-)_{\mathfrak{sl}_5}$.
\end{exa}
Proposition $\ref{helpAn}$ implies immediately for $1\leq k\leq n$:
\begin{prop}\label{basisAn}
The vectors $f^{\bs}v_{\w_k}, \bs \in S(\w_k)$ are a FFL basis of $V(\w_k)$. $\!\hfill \Box$
\end{prop}
\subsection{Type \texorpdfstring{$\mathtt{B_n}$}{Bn}}
Let $\Lg$ be a simple Lie algebra of type $\mathtt{B_n}, n \geq 2$ with associated Dynkin diagram
\begin{center}
\begin{tikzpicture}[scale=.4]
    \draw (-1,0) node[anchor=east]  {$\mathtt{B_n}$};
    \foreach \x in {0,...,4}
    \draw[thick,xshift=\x cm] (\x cm,0) circle (1 mm);
    \foreach \y in {0,...,0,2}
    \draw[thick,xshift=\y cm] (\y cm,0) ++(.3 cm, 0) -- +(14 mm,0);

		\draw[thick] (6.3,0.1) -- +(1.4 cm, 0);
		\draw[thick] (6.3,-0.1) -- +(1.4 cm, 0);

		\draw (7.7,0) node[anchor=east] {\small{{$\bf\boldsymbol{>}$}}};

    \draw (0,0) node[anchor=north]  {\tiny{1}};
    \draw (2,0) node[anchor=north]  {\tiny{2}};
    \draw (4,0) node[anchor=north]  {\tiny{n-2}};
    \draw (6,0) node[anchor=north]  {\tiny{n-1}};
    \draw (8,-0.1) node[anchor=north]  {\tiny{n}};
    \draw[thick,dashed] (2.3,0)--(3.7,0);
  \end{tikzpicture}
\end{center}
The highest root for a Lie algebra of type $\mathtt{B_n}$ is of the form $\theta=\alpha_1 +2\sum_{i=2}^{n}\alpha_i$. So we have $\theta^{\vee}=\alpha_1^{\vee}+2\sum_{i=2}^{n-1}\alpha_i^{\vee}+ \alpha_n^{\vee}$ and $\langle \omega,\theta^{\vee}\rangle=1\Leftrightarrow \omega\in \{\omega_1,\omega_n\}$.\\[2mm]
First we consider the case $\omega=\omega_1$. We want to consider the case $\mathtt{B_2}, w_1$ separately. Because there are not enough roots, this case does not fit in our general description of $\mathtt{B_n}, w_1$.
We claim that the following polytope parametrizes a FFL basis of $V(m\omega_1), m \in \Z_{\geq 0}$:
\begin{equation*}
P(m\w_1)=\left\{ \textbf{x} \in \R_{\geq 0}^{3}\mid
\begin{aligned}
x_{2} +  x_{{1}}  \leq m \\
x_{2} +  x_{{3}}  \leq m \\
\end{aligned}
\right\}.
\end{equation*}
We fix $\beta_1 = (2,1), \beta_2 = (1,1), \beta_3 = (1,0)$ and the order $\beta_2 \prec \beta_1 \prec \beta_3$. Then with Proposition $\ref{helpfullprop}$ it is immediate that this polytope is normal. The following actions of the differential operators imply the spanning property in the sense of Section \ref{spanning} Proposition \ref{important}.
\begin{equation*}
\begin{aligned}
\partial_{\al_2}^{s_1} f_1^{s_1 + s_2} =&\  c_0f_1^{s_1} f_2^{s_2} + \textrm{smaller terms} \in I_{\lambda}\\
\partial_{\al_1}^{s_2 + 2s_3} f_1^{s_2 + s_3} =&\ c_1f_2^{s_2}f_3^{s_3} + \textrm{smaller terms} \in I_{\lambda},\ c_i\in\C\setminus\{0\}.
\end{aligned}
\end{equation*}
We conclude that $\{f^{\bs}v_{\w_1} \mid \bs \in S(m\omega_1)\}=\{\vwe,f_{1}\vwe,f_{2}\vwe,f_{3}\vwe,f_{1}f_{3}\vwe, \}$ is a spanning set of $V(\w_1)$.\\[2mm] 
Now we consider the case $n\geq 3$. If we construct $H(\Ln_{\omega_1}^-)_{\Lg}$ as in Section $\ref{test}$ we get a $n$-chain of length $2$. Therefore we choose a new order on the roots and change our Hasse diagram slightly to obtain a diagram without $k$-chains of length 2.
We illustrate this procedure for $\Lg$ of type $\mathtt{B_3}$.
Then the roots $\Delta_{+}^{\omega_1}$ are given by
\begin{center} $\begin{array}{|l|l|l|l|l|}
\hline
\beta_1=(1,2,2) & \beta_2=(1,1,2) & \beta_{3}=(1,1,1) & \beta_4=(1,1,0) & \beta_5=(1,0,0) \\
\hline	
\end{array}$ \end{center}
We choose a new order 
\begin{equation*}
\beta_1 \prec \beta_2 \prec \beta_4 \prec \beta_5 \prec \beta_3,
\end{equation*}
and change the Hasse diagram
\begin{center}
\begin{tikzpicture}
  \node (1) at (-6.75,4) {$\beta_1$};
  \node (2) at (-5.5,4) {$\beta_2$};
  \node (3) at (-4.25,4) {$\beta_3$};
  \node (4) at (-3,4) {$\beta_4$};
  \node (5) at (-1.75,4) {$\beta_5$};
	\node (6) at (0,4) {$\leadsto$};
	\node (7) at (0.5,4) {};
	\node (8) at (1.75,4) {$\beta_1$};
  \node (9) at (3,4) {$\beta_3$};
  \node (11) at (4,5) {$\beta_2$};
  \node (12) at (4,3) {$\beta_4$};
	\node (13) at (5,4) {$\beta_5.$};
   \draw[->] (1) edge node[above] {\tiny{2}} (2);
   \draw[->] (2) edge node[above] {\tiny{3}} (3);
   \draw[->] (4) edge node[above] {\tiny{2}} (5);
   \draw[->] (3) edge node[above] {\tiny{3}} (4);


   \draw[->] (8) edge node[above] {\tiny{011}} (9);
   \draw[->] (9) edge node[right, pos=0.25] {\tiny{012}} (12);
   \draw[->] (12) edge node[right, pos=0.25] {\tiny{2}} (13);
   \draw[->] (9) edge node[right, pos=0.25] {\tiny{2}} (11);
   \draw[->] (11) edge node[right, pos=0.25] {\tiny{012}} (13);
\end{tikzpicture}
\end{center}
First we check, if the new diagram has no $k$-chains. The first edge is labeled by $\al_2 + \al_3 = 011$ and we have $\beta_{3} - (\alpha_2 + \al_3) = \beta_{5}$. If we have a monomial $f_{1}^{k_1} f_{3}^{k_2} \in S(\mathfrak{n}_{\omega_1}^-), k_1,k_2 \geq 1$ and we act by $\partial_{\al_2 + \al_3}$ we get:
\begin{equation*}
c_0f_{1}^{k_1-1} f_{3}^{k_2+1} + c_1f_{1}^{k_1} f_{3}^{k_2 - 1} f_{5},\ c_i\in\C.
\end{equation*}
By the change of order $\beta_{3}$ is larger than $\beta_5$ and so $f_{1}^{k_1-1} f_{3}^{k_2+1} \succ f_{1}^{k_1} f_{3}^{k_2 - 1} f_{5}$.
Therefore we can neglect the edge between $\beta_3$ and $\beta_5$.\\
Now we consider $\partial_{\al_2}^{k_3} f_{1}^{k_1} f_{3}^{k_2}$. Because of $\partial_{\al_2}f_{3},\partial_{\al_2}f_{2}  = 0$ we get $f_{1}^{k_1-k_3} f_{3}^{k_2}f_{2}^{k_3},$ for $k_3 \leq k_1$.
So instead of drawing an edge directly from $\beta_1$ to $\beta_2$, we can draw an edge, labeled by 2, from $\beta_3$ to $\beta_2$. Similar, because of $\beta_1 - \al_2 - 2\al_3=\beta_4$, we can draw an edge labeled by $012$ from $\beta_3$ to $\beta_4$. The other edges do not cause any problems.\\
The second step is to show that the paths in the new diagram, define the actions by differential operators and the corresponding maximal elements like in Section \ref{spanning} Proposition \ref{important}. By the choice of order we get the following equalities:
\begin{equation*}
\begin{aligned}
\partial_{\al_2 + 2\al_3}^{s_5} \partial_2^{s_2} \partial_{\al_2 + \al_3}^{s_3} f_{1}^{s1 + s3 + s2 + s5} &= c_0f_1^{s_1} f_{3}^{s_3} f_2^{s_2} f_5^{s_5} + \textrm{smaller terms} \in I_{\lambda}\\
\partial_{\al_2}^{s_5} \partial_{\al_2 + 2\al_3}^{s_4} \partial_{\al_2 + \al_3}^{s_3} f_{1}^{s1 + s3 + s4 + s5} &= c_1f_1^{s_1} f_{3}^{s_3} f_4^{s_4} f_5^{s_5} + \textrm{smaller terms} \in I_{\lambda},
\end{aligned}
\end{equation*}
with $c_i\in\C\setminus\{0\}$. In the general case, for arbitrary $n > 3$, we have $N = 2n - 1$. Let $r := {\lceil N/2 \rceil}$, then $\Delta_{+}^{\omega_1}$ is given by:
\begin{center} $\begin{array}{|l|l|l|l|l|}
\hline
\beta_1=(1,2,2,\dots,2) & \,\,\,\beta_2 \,\,\, = (1,1,2,\dots,2,2) & \,\,\, \dots \,\,\, & \beta_{r - 1}=(1,1,\dots,1,2) \\
\hline
\beta_r=(1,1,1,\dots,1) & \beta_{r+1}=(1,1,1,\dots,1,0) & \,\,\, \dots \,\,\,& \,\, \beta_N \,\,=(1,0,\dots,0,0) \\
\hline	
\end{array}$ \end{center}
Then the only $n$-chain has the following form
$\beta_{r - 1}  \overset{n}{\longrightarrow} \beta_{r}  \overset{n}{\longrightarrow} \beta_{r + 1}$
We change the order from $\beta_1 \prec \beta_2 \prec \dots \prec \beta_N$ to
\begin{equation}\label{spanningorderBN}
\beta_1 \prec \beta_2 \prec \dots \prec \beta_{r -1}\prec\beta_{r +2}\preceq \dots \preceq \beta_{N-1} \preceq \beta_{r +1}   \prec \beta_N \prec \beta_{r }.
\end{equation}
The modifications of the diagram are similar to them in the case of $\mathtt{B_3}$, so the Hasse diagram for a Lie algebra of type $\mathtt{B_n}$ has the following shape
\begin{center}
\begin{tikzpicture}

  \node (1) at (-5.5,0) {$\beta_1$};
  \node (2) at (-4,0) {$\beta_3$};
	\node (11) at (-3,0) {$\beta_4$};
  \node (3) at (-2,0) {$. . .$};
  \node (4) at (-1,0) {$\beta_{r}$};
  \node (5) at (0.65,0) {$\beta_{r+1}$};
	\node (12) at (2.05,0) {$\beta_{r+2}$};
  \node (6) at (3.25,0) {$. . .$};
  \node (7) at (4.5,0) {$\beta_{N-2}$};
  \node (8) at (5.4,1) {$\beta_{2}$};
  \node (9) at (5.4,-1) {$\beta_{N-1}$};
  \node (10) at (6.3,0) {$\beta_{N}.$};
    \draw[->] (1) edge node[above] {\tiny{0110...0}} (2);
    \draw[->] (2) edge node[above] {\tiny{4}}(11);
		\draw[->] (11) edge node[above] {\tiny{5}}(3);
    \draw[->] (3) edge node[above] {\tiny{n}} (4);
    \draw[->] (4) edge node[above] {\tiny{001...12}} (5);
    \draw[->] (5) edge node[above] {\tiny{n-1}}(12);
		\draw[->] (12) edge node[above] {\tiny{n-2}} (6);
    \draw[->] (6) edge node[above] {\tiny{4}} (7);
    \draw[->] (7) edge node[left] {\tiny{2}} (8);
    \draw[->] (7) edge node[left] {\tiny{012...2}} (9);
    \draw[->] (8) edge node[right, pos=0.25] {\tiny{012...2}} (10);
    \draw[->] (9) edge node[right, pos=0.25] {\tiny{2}} (10);
%
%
%
\end{tikzpicture}
\end{center}
Associated to the diagrams we get the following polytope for $m \in \Z_{\geq 0}$:
\begin{equation}\label{polytopeBN}
P(m\omega_1)=\left\{ \textbf{x} \in \R_{\geq 0}^{N}\mid
\begin{aligned}
x_{1} + x_{2} + \dots + x_{{N-2}} + x_{N} \leq m \\
x_{1} + x_{3} + \dots + x_{{N-1}} + x_{N} \leq m \\
\end{aligned}
\right\}.
\end{equation}
By Section $\ref{spanning}$, Corollary $\ref{Spanvmw}$ the elements
$$\vwe,f_{1}\vwe,f_{2}\vwe, \dots ,f_{N}\vwe,f_{2}f_{{N-1}}\vwe$$
span $\Vw$ and with \cite[p. 276]{Car05} we have $\dim V(\w_1) = 2n+1$. 
\begin{prop}
The vectors $f^{\bs}v_{\w_1}, \bs \in S(\w_1)$ are a FFL basis of $V(\w_1).\,\hfill \Box$
\end{prop}
\begin{proof}
The previous observations imply that $\{f^{\bs}v_{\w_1}, \bs \in S(\w_1)\}$ is a basis of $V(\omega_1)$. So it remains to show that $P(\w_1)$ is a normal polytope.\\
Because we changed the Hasse diagram we have to change the order of the roots to apply Section $\ref{helpful}$. One possible new order is given by:
\begin{center} 
$\beta_1 \prec \beta_3 \prec \beta_{4} \prec \dots \prec \beta_{N-2} \prec \beta_2 \prec \beta_{N-1} \prec \beta_N$.
\end{center}
Using this order we see immediately that $P(\omega_1)$ is a normal polytope.
\end{proof}
\vspace{2mm}

Now we consider the case $\omega=\omega_n$. In the following it will be again convenient to describe the roots and fundamental weights of $\mathtt{B_n}$ in terms of an orthogonal basis:
\begin{equation}\label{relrootsBn}
\rrn=\{\varepsilon_{i,j}=\varepsilon_{i}+\varepsilon_{j} \mid  1\leq i< j\leq n\}\cup \{\varepsilon_{k} \mid 1\leq k\leq n\}.
\end{equation}
The total order on $\rrn$ is obtained by considering the Hasse diagram. We begin with $\beta_1=\theta$ on the top and then labeling from left to right with increasing label on each level of the Hasse diagram, which correspond to the height of the roots in $\rrn$. For a concrete example see Figure \ref{hasseB4B5} in the Appendix. The corresponding polytope is defined as usual, see Table \ref{polyB4} for an example. The elements of $\rrn$ correspond to $\varepsilon_{i,j}= \sum_{r=i}^{j-1}\al_r+2\sum_{r=j}^{n}\al_r$ and $\varepsilon_{k}= \sum_{r=k}^n \al_r$. The highest weight of $V(\omega_n)$ has the description $\omega_n=\frac{1}{2}\sum_{r=1}^n \varepsilon_r$. Further the lowest weight is $-\omega_n = -\frac{1}{2}\sum_{r=1}^n \varepsilon_r.$ With this observation, the fact that $\w_n$ is minuscule and $\eqref{relrootsBn}$ we see that\\
\begin{equation}\label{basisBn}
\B_{V(\omega_n)} = \left\{ f_\alpha v_{\w_n} \mid 
\alpha = \frac{1}{2} \sum_{r=1}^{n}l_r\varepsilon_r , l_r \in \{-1,1\},\ \forall 1\leq r\leq n\right\}\subset V(\omega_n)
\end{equation}
is a basis.
We note that $|\B_{V(\omega_n)}|=2^n=\dim V(\omega_n)$.
\begin{rem}\label{remBn}
For an arbitrary element $\bps\in \dswn^{\mathtt{B_n}}$ we have at most one root of the form $\varepsilon_k\in \bps$, because if there are $\varepsilon_{k_1},\varepsilon_{k_2}\in\bps$ (wlog $k_1<k_2$) we have: $\varepsilon_{k_1}-\varepsilon_{k_2}= \sum_{r=k_1}^{k_2-1}\al_r$. So with Remark \ref{defdyck} we know that there is a Dyck path $\bp\in D_{\omega_n}$ with $\varepsilon_{k_1},\varepsilon_{k_2}\in\bp$. This observation implies that the elements $\bps\in\dswn^{\mathtt{B_n}}$ have two possible forms:
\begin{equation}\label{b1b2}
(B_1)\ \bps=\{\varepsilon_{k},\varepsilon_{i_2,j_2},\dots,\varepsilon_{i_r,j_r}\}\ \ \textrm{or}\ \ (B_2)\  \bps=\{\varepsilon_{i_1,j_1},\dots,\varepsilon_{i_t,j_t}\}.
\end{equation}
\end{rem}
So we can characterize the elements $\bps\in\dswn^{\mathtt{B_n}}$ as follows.
\begin{prop}\label{chprobBn}
For $\bps\in\prn$ arbitrary we have:
\begin{equation}\label{dspropBn}
\bps\in\dswn^{\mathtt{B_n}} \Leftrightarrow \begin{cases} \bps\ \textrm{is of the form $(B_1)$, with (a) and (b)},\\
\bps\ \textrm{is of the form $(B_2)$, with (b)}.
\end{cases}
\end{equation}
In addition: $\ \bps\in\dswn^{\mathtt{B_n}} \Rightarrow \begin{cases} s\leq \lceil\frac{n}{2}\rceil,\ \bps\ \textrm{is of the form $(B_1)$},\\
s\leq \lfloor\frac{n}{2}\rfloor,\ \bps\ \textrm{is of the form $(B_2)$},
\end{cases}$\\ with $s=|\bps|$. The properties (a) and (b) are defined by
\begin{itemize}
\item[\textit{(a)}] $\forall \, 1\leq l\leq s:\ k<i_l<j_l,$
\item[\textit{(b)}] $\forall \, \ail,\aim\in\bps, i_l\leq i_m: i_l<i_m<j_m<j_l.$
\end{itemize}
\end{prop}
\begin{proof}
First we prove $\eqref{dspropBn}$: ``$\Leftarrow$'': Let $\bp=\{\varepsilon_{k},\varepsilon_{i_2,j_2},\dots,\varepsilon_{i_s,j_s}\}$ be an element of form $(B_1)$ with the properties $(a)$ and $(b)$. Assume there are two roots $x,y\in \bp$ such that there exists a Dyck path $\bq\in D_{\omega_n}$ containing them.
\vspace{2mm}

\textbf{1. Case:} $x=\varepsilon_{k}$ and $y=\varepsilon_{i_m,j_m}$, for $1\leq m\leq s$. Then we have
\begin{equation*}
\varepsilon_{i_m,j_m}-\varepsilon_{k}= \sum_{r=i_m}^{j_m-1}\alpha_r+2\sum_{r=j_m}^{n}\alpha_r-\sum_{r=k}^{n}\alpha_r=-\sum_{r=k}^{i_m-1}\al_r+\sum_{r=j_m}^{n }\al_r.
\end{equation*}
Hence there is no Dyck path $\bq\in D_{\omega_n}$ such that $x$ and $y$ are contained in $\bq$. This is a contradiction to the assumption.
\vspace{2mm}

\textbf{2. Case:} $x=\varepsilon_{i_m,j_m}$ and $y=\varepsilon_{i_l,j_l}$, wlog $i_l<i_m$. Then we have
\begin{equation*}
\varepsilon_{i_l,j_l}-\varepsilon_{i_m,j_m}= \sum_{r=i_l}^{j_l-1}\alpha_r+2\sum_{r=j_l}^{n}\alpha_r-\sum_{r=i_m}^{j_m-1}\alpha_r-2\sum_{r=j_m}^{n}\alpha_r=\sum_{r=i_l}^{i_m-1}\alpha_r-\sum_{r=j_m}^{j_l-1}\alpha_r.
\end{equation*}
This is a contradiction to our assumption and hence: $\bp\in\dswn^{\mathtt{B_n}}$.
\vspace{2mm}

Let $\bp$ be of form $(B_2)$ with property $(b)$, and assume there are two roots $x,y\in \bp$ such that there exists a Dyck path $\bq\in D_{\omega_n}$ containing them. Like in the second case of our previous consideration the assumption is false and therefore: $\bp\in\dswn^{\mathtt{B_n}}$.
\vspace{2mm}

``$\Rightarrow$'': Let $\bp\in\dswn^{\mathtt{B_n}}$. Then we know from Remark \ref{remBn} that $\bp$ is of the form $(B_1)$ or $(B_2)$. Let $\bp=\{\varepsilon_{k},\varepsilon_{i_1,j_1},\dots,\varepsilon_{i_s,j_s} \}$ be of form $(B_1)$, with $i_l<j_l$ for all $1\leq l\leq s$.
\vspace{2mm}

\textbf{1. Step:} Assume $\exists\ 1\leq m\leq s:\ k>i_m$. Then we have:
\begin{equation*}
\varepsilon_{i_m,j_m}-\varepsilon_{k}= \sum_{r=i_m}^{j_m-1}\alpha_r+2\sum_{r=j_m}^{n}\alpha_r-\sum_{r=k}^n \al_r=\sum_{r=i_m}^{k-1}\al_r + \sum_{r=j_m}^{n}\al_r.
\end{equation*}
So by Remark \ref{defdyck} this contradicts $\bp\in\dswn^{\mathtt{B_n}}$. Hence: $k<i_m$ for all $1\leq m\leq s$.
\vspace{2mm}

Let $\varepsilon_{i_l,j_l},\varepsilon_{i_m,j_m}\in\bp$ be two roots with $\varepsilon_{i_l,j_l}\neq\varepsilon_{i_m,j_m}$. We assume wlog $i_l\leq i_m$.
\vspace{2mm}

\textbf{2. Step:} Assume $i_l=i_m=:y$. Set $x=\min\{j_l,j_m\}$ and $\bar{x}=\max\{j_l,j_m\}$:
\begin{equation*}
\varepsilon_{y,x}-\varepsilon_{y,\bar{x}}= \sum_{r=y}^{x-1}\alpha_r+2\sum_{r=x}^{n}\alpha_r-\sum_{r=y}^{\bar{x}-1} \al_r -2\sum_{r=\bar{x}}^{n}\al_r=\sum_{r=x}^{\bar{x}}\al_r.
\end{equation*}
Again by Remark \ref{defdyck} this contradicts $\bp\in\dswn^{\mathtt{B_n}}$ and we have: $i_l<i_m$.
\vspace{2mm}

\textbf{3. Step:} Let $i_l<i_m$ and assume $j_l=j_m=:x$, we consider:
\begin{equation*}
\varepsilon_{i_l,x}-\varepsilon_{i_m,x}= \sum_{r=i_l}^{x}\alpha_r+2\sum_{r=x}^{n}\alpha_r-\sum_{r=i_m}^{x} \al_r -2\sum_{r=x}^{n}\al_r=\sum_{r=i_l}^{i_m-1}\al_r.
\end{equation*}
This contradicts $\bp\in\dswn^{\mathtt{B_n}}$ by Remark \ref{defdyck}, so: $j_l\neq j_m$.
\vspace{2mm}

\textbf{4. Step:} $(i_l<i_m<j_m)\wedge (i_l<j_l)$. So there are three possible cases:
\begin{center}
(a) $i_l<j_l<i_m<j_m$, (b) $i_l<i_m<j_l<j_m$ and (c) $i_l<i_m<j_m<j_l$.
\end{center}
Let us assume $\varepsilon_{i_l,j_l}$ and $\varepsilon_{i_m,j_m}$ have the property of case (a):
\begin{equation*}
\varepsilon_{i_l,j_l}-\varepsilon_{i_m,j_m}= \sum_{r=i_l}^{j_l-1}\alpha_r+2\sum_{r=j_l}^{n}\alpha_r-\sum_{r=i_m}^{j_m-1} \al_r -2\sum_{r=j_m}^{n}\al_r=\sum_{r=i_l}^{j_m-1}\al_r+2\sum_{r=j_l}^{i_m-1}\al_r+\sum_{r=i_m}^{j_m-1}\al_r.
\end{equation*}
This contradicts $\bp\in\dswn^{\mathtt{B_n}}$ by Remark \ref{defdyck}. We assume now that $\varepsilon_{i_l,j_l}$ and $\varepsilon_{i_m,j_m}$ have the property of case (b):
\begin{equation*}
\varepsilon_{i_l,j_l}-\varepsilon_{i_m,j_m}= \sum_{r=i_l}^{j_l-1}\alpha_r+2\sum_{r=j_l}^{n}\alpha_r-\sum_{r=i_m}^{j_m-1} \al_r -2\sum_{r=j_m}^{n}\al_r=\sum_{r=i_l}^{i_m-1}\al_r+\sum_{r=j_l}^{j_m-1}\al_r.
\end{equation*}
Again by Remark \ref{defdyck} this contradicts $\bp\in\dswn^{\mathtt{B_n}}$. Finally we conclude that two roots $\varepsilon_{i_l,j_l},\varepsilon_{i_m,j_m}\in\bp$, with $i_l\leq i_l$, satisfy (c): $i_l<i_m<j_m<j_l$. To prove this statement for a $\bp\in\dswn^{\mathtt{B_n}}$ of form $(B_2)$ we only have to restrict our consideration to the second, third and fourth step.
\vspace{2mm}

It remains to show that the cardinality $s$ of $\bp$ is bounded by $\lceil \frac{n}{2}\rceil$ respectively $\lfloor \frac{n}{2}\rfloor$. Again we consider the two possible cases:
\vspace{2mm}

\textbf{1. Case:} $\bp=\{\varepsilon_{k},\varepsilon_{i_2,j_2},\dots,\varepsilon_{i_s,j_s}\}$ is of the form $(B_1)$ and we assume $|\bp|=s>\lceil \frac{n}{2}\rceil$. Then we know from our previous consideration that after reordering the roots in $\bp$ we have a strictly increasing chain of integers:
\begin{equation}\label{cp1}
C_{\bp}:\ k<i_2<i_3\cdots<i_s<j_s<j_{s-1}<\cdots<j_3<j_2.
\end{equation}
So there are $2s-1$ different integers, where each of these correspond to a $\varepsilon_i$ for $1\leq i \leq n$. By assumption we know $2s-1\geq 2(\lceil \frac{n}{2}\rceil+1)-1\geq n+1$, but there are only $n$ different elements in $\{\varepsilon_r \mid 1\leq r\leq n\}$. So this is a contradiction and hence: $|\bp|=s\leq \lceil \frac{n}{2}\rceil$.
\vspace{2mm}

\textbf{2. Case:} $\bp=\{\varepsilon_{i_1,j_1},\dots,\varepsilon_{i_s,j_s}\}$ is of the form $(B_2)$ and we assume $|\bp|=s>\lfloor \frac{n}{2}\rfloor$. As in the first case we have a strictly increasing chain of integers:
\begin{equation}\label{cpBn2}
C_{\bp}:\ i_1<i_2\cdots<i_s<j_s<j_{s-1}<\cdots<j_2<j_1.
\end{equation}
So we have $2s$ different integers corresponding to at most $n$ different elements in $\{\varepsilon_r \mid 1\leq r\leq n\}$, but by assumption we have $2s\geq 2(\lfloor \frac{n}{2}\rfloor+1)\geq n+1$. Again we have a contradiction and therefore: $|\bp|=s\leq \lfloor \frac{n}{2}\rfloor$.
\end{proof}
Because of Corollary \ref{Spanvmw} we know that the elements $\{f^{\bs}v_{\omega_n} \mid \bs \in S(\omega_n)\}$ span $V(\omega_n)$ and by Proposition $\ref{bijSWDL}$ there is a bijection between $S(\omega_n)$ and $\dswn^{\mathtt{B_n}}$.
We want to show that these elements are linear independent. To achieve that we will show that $|\dswn^{\mathtt{B_n}}|=\dim V(\w_n) $. To be more explicit:
\begin{prop}\label{proBn}
$|\dswn^{\mathtt{B_n}}|=\dim V(\omega_n)=2^n$.
\end{prop}
\begin{proof}
We know from (\ref{dspropBn}) that for an arbitrary element $\bps\in\dswn^{\mathtt{B_n}}$ the number of roots $s$ in $\bps$ is bounded by $\lceil \frac{n}{2}\rceil$ respective by $\lfloor \frac{n}{2}\rfloor$. So the number of integers occurring in $C_{\bps}$ (see (\ref{cp1}) and (\ref{cpBn2})) is also bounded:
\begin{equation}
|C_{\bps}|=\begin{cases} 2s-1\leq 2\lceil \frac{n}{2}\rceil-1\leq n,&\ \textrm{$\bps$ is of the form $(B_1)$},\\
2s\leq 2\lfloor \frac{n}{2}\rfloor\leq n,&\ \textrm{$\bps$ is of the form $(B_2)$}.\end{cases}
\end{equation}
In order to simplify our notation, we define $l:=|C_{\bps}|$, so we have for an arbitrary $\bps\in\dswn^{\mathtt{B_n}}$: $0\leq l\leq n$. Further we define the subsets $\dswn^{\mathtt{B_n}}(l)\subset \dswn^{\mathtt{B_n}}$:
\begin{equation}\label{defdswnkBn}
\dswn^{\mathtt{B_n}}(l):=\{ \bps\in\dswn^{\mathtt{B_n}}\mid|C_{\bps}|=l\},\ \forall\ 0\leq l\leq n.
\end{equation}
So the elements in $\dswn^{\mathtt{B_n}}(l)$ are parametrized by $l$ totally ordered integers $u_i$ in $\{r\ |\ 1\leq r\leq n\}$, $\forall\ 1\leq i\leq l$. Hence we conclude: $|\dswn^{\mathtt{B_n}}(l)|\leq\binom{n}{l}$, $\forall\ 1\leq l\leq n$ and so
\begin{equation}\label{dimabBn}
|\dswn^{\mathtt{B_n}}|=|\bigcup_{l=1}^n \dswn^{\mathtt{B_n}}(l)|=\sum_{l=0}^n |\dswn^{\mathtt{B_n}}(l)|\leq \sum_{l=0}^n \binom{n}{l}=2^n.
\end{equation}
We also know from Corollary \ref{Spanvmw} that we have $|\dswn^{\mathtt{B_n}}|\geq \dim V(\omega_n)=\binom{n}{l}=2^n$. Finally we conclude: $|\dswn^{\mathtt{B_n}}|=2^n$.\end{proof}
\begin{exa} The polytope $P(m\omega_3)$ in the case $\Lg = \mf{so}_7$ has the following shape.
\begin{equation*}
P(m\omega_3)=\left\{\bx\in\R^{6}_{\geq 0}\mid \begin{aligned}
&x_1+x_2+x_3+x_5+x_6\leq m\\
&x_1+x_2+x_4+x_5+x_6\leq m
\end{aligned}
\right\}.
\end{equation*}
\end{exa}
Proposition \ref{proBn} implies immediately:
\begin{prop}\label{basisBnwn}
The vectors $f^{\bs}v_{\w_n}, \bs \in S(\w_n)$ are a FFL basis of $V(\w_n).\,\hfill \Box$
\end{prop}

\subsection{Type \texorpdfstring{$\mathtt{C_n}$}{Cn}}
Let $\Lg$ be a simple Lie algebra of type $\mathtt{C_n}$ for $n \geq 3$ with the associated Dynkin diagram
\begin{center}
\begin{tikzpicture}[scale=.4]
    \draw (-1,0) node[anchor=east]  {$\mathtt{C_n}$};
    \foreach \x in {0,...,4}
    \draw[thick,xshift=\x cm] (\x cm,0) circle (1 mm);
    \foreach \y in {0,...,0,2}
    \draw[thick,xshift=\y cm] (\y cm,0) ++(.3 cm, 0) -- +(14 mm,0);

		\draw[thick] (6.3,0.1) -- +(1.4 cm, 0);
		\draw[thick] (6.3,-0.1) -- +(1.4 cm, 0);

		\draw (7.8,0) node[anchor=east] {\small{{$\bf\boldsymbol{<}$}}};

    \draw (0,0) node[anchor=north]  {\tiny{1}};
    \draw (2,0) node[anchor=north]  {\tiny{2}};
    \draw (4,0) node[anchor=north]  {\tiny{n-2}};
    \draw (6,0) node[anchor=north]  {\tiny{n-1}};
    \draw (8,-0.1) node[anchor=north]  {\tiny{n}};
    \draw[thick,dashed] (2.3,0)--(3.7,0);
  \end{tikzpicture}
\end{center}For all fundamental weights $\omega_k$ we have $\langle \omega_k,\theta^{\vee}\rangle = 1$, where $\theta = (2,2,\dots,2,1)$ is the highest root and $\theta^{\vee} =(1,1,\dots,1)$ the corresponding coroot. But only for $\omega_1$ the associated Hasse diagram $H(\Ln_{\omega_1}^-)_{\Lg}$ has no $i$-chains.  In fact for $1 \leq k \leq n$, $H(\Ln_{\omega_k}^-)_{\Lg}$ has $k-1$ different $i$-chains, with $1 \leq i \leq k-1$.
The following example explains, why we are not able to rewrite the diagram in these cases, with our approach.\\
For all $\omega_k$ with $k \neq 1$ we have the following $1$-chain.
\begin{center}
\begin{tikzpicture}
\node (1) at (0,0) {$\beta_1$};
  \node (2) at (1,0) {$\beta_2$};
	\node (3) at (2,0) {$\beta_3.$};
	\draw[->] (1) edge node[above] {\tiny{1}} (2);
	\draw[->] (2) edge node[above] {\tiny{1}} (3);
\end{tikzpicture}
\end{center}
Here $\beta_1 = 2\al_1 + \dots +2\al_{n-1} + \al_n$ is the highest root, $\beta_2 = \al_1 + 2\al_2 + \dots + 2\al_{n-1} + \al_n$ and $\beta_3 = 2\al_2 +  \dots + 2\al_{n-1} + \al_n$. Note that $\beta_1 - \beta_3 = 2 \al_1$, which is not a root. Further, because $\beta_1$ is the highest root, there are no roots $\gamma \in \rrp, \nu \in \rrk$ with $\partial_{\gamma}f_{\nu} = f_{3}$, except for $\nu = \beta_2.$ Hence it is more involved to rewrite the diagram into a diagram without $k$-chains such that there is a path connecting $\beta_1$ and $\beta_3$.\\
Nevertheless, in \cite{FFoL11b} similar statements to Theorem A and Theorem B were proven for arbitrary dominant integral weights.\\[2mm]
Now we consider $\omega = \omega_1$. Then we have $2n - 1 = N$ and $\Delta_+^{\omega}$ is given by
\begin{center} $\begin{array}{|l|l|l|l|l|}
\hline
\beta_1\,\,\,\,\,\,=(2,2,\dots,2,1) & \,\,\,\beta_2 \,\,\, = (1,2,\dots,2,1) & \,\,\, \dots \,\,\, & \beta_{n}\,=(1,1,\dots,1,1) \\
\hline
\beta_{n+1}=(1,1,\dots,1,0) & \beta_{n+2}=(1,\dots,1,0,0) & \,\,\, \dots \,\,\,&  \beta_N=(1,0,\dots,0,0) \\
\hline	
\end{array}$ \end{center}
The diagram $H(\Ln_{\omega}^-)_{\Lg}$ has the following form.
\begin{center}
\begin{tikzpicture}
\node (1) at (0,0) {$\beta_1$};
  \node (2) at (1,0) {$\beta_2$};
	\node (3) at (2,0) {$\beta_3$};
	\node (4) at (3,0) {$...$};
	\node (5) at (4.25,0) {$\beta_{n-1}$};
	\node (6) at (5.5,0) {$\beta_{n}$};
	\node (7) at (6.75,0) {$\beta_{n+1}$};
	\node (8) at (8,0) {$...$};
	\node (9) at (9,0) {$\beta_N.$};
	\draw[->] (1) edge node[above] {\tiny{1}} (2);
	\draw[->] (2) edge node[above] {\tiny{2}} (3);
	\draw[->] (3) edge node[above] {\tiny{3}} (4);
	\draw[->] (4) edge node[above] {\tiny{n-2}} (5);
	\draw[->] (5) edge node[above] {\tiny{n-1}} (6);
	\draw[->] (6) edge node[above] {\tiny{n}} (7);
	\draw[->] (7) edge node[above] {\tiny{n-1}} (8);
	\draw[->] (8) edge node[above] {\tiny{2}} (9);
\end{tikzpicture}
\end{center}
There are no $k$-chains and the associated polytope is given by
\begin{equation*}
P(m\omega)= \{ \mathbf{x} \in \R_{\geq 0}^{N} \mid x_1 + x_2 + \dots + x_N \leq m\}.
\end{equation*}
By Corollary $\ref{Spanvmw}$ the elements $\vw,f_{1}\vw,f_{2}\vw, \dots, f_{N}\vw$ span $\Vw$ and with \cite[p295]{Car05} we know $\dim \Vw = 2n$. From these observations we get immediately:
\begin{prop}
The set $\B_{\omega} = \{f^{\bs}v_{\w} \mid \bs \in S(\w)\}$ is a FFL basis of $V(\w)$.\hfill $\Box$
\end{prop}
\subsection{Type \texorpdfstring{$\mathtt{D_n}$}{Dn}}
Let $\Lg$ be a simple Lie algebra of type $\mathtt{D_n}$ with associated Dynkin diagram
\begin{center}
\begin{tikzpicture}[scale=.4]
    \draw (-1,0) node[anchor=east]  {$\mathtt{D_n}$};
    \foreach \x in {0,...,3}
    \draw[thick,xshift=\x cm] (\x cm,0) circle (1 mm);
    \foreach \y in {0,...,0,2}
    \draw[thick,xshift=\y cm] (\y cm,0) ++(.3 cm, 0) -- +(14 mm,0);

		\draw[thick] (6.3,0.1) -- +(1.4 cm, 0.5);
		\draw[thick] (6.3,-0.1) -- +(1.4 cm, -0.5);
    \draw[thick] (8 ,0.7) circle (1mm);
		\draw[thick] (8 ,-0.7) circle (1mm);

    \draw (0,0) node[anchor=north]  {\tiny{1}};
    \draw (2,0) node[anchor=north]  {\tiny{2}};
    \draw (4,0) node[anchor=north]  {\tiny{n-3}};
    \draw (6,0) node[anchor=north]  {\tiny{n-2}};
		\draw (8,0.7) node[anchor=north]  {\tiny{n-1}};
    \draw (8,-0.7) node[anchor=north]  {\tiny{n}};
    \draw[thick,dashed] (2.3,0)--(3.7,0);
  \end{tikzpicture}
\end{center}
The highest root in type $\mathtt{D_n}$ is of the form $\theta=\alpha_1 +2\sum_{i=2}^{n-2}\alpha_i +\al_{n-1}+\al_{n}$. Since $\Lg$ is simply-laced we have $\theta^{\vee}=\alpha_1^{\vee}+2\sum_{i=2}^{n-2}\alpha_i^{\vee}+ \alpha_{n-1}^{\vee}+\alpha_n^{\vee}$.
Hence $\langle \omega,\theta^{\vee}\rangle=1\Leftrightarrow \omega\in \{\omega_1,\omega_{n-1},\omega_n\}$.
\vspace{2mm}

First we consider the case $\omega = \omega_1$. Then we have $2n-2 = N$ and $\Delta_+^{\w_1}$ has the following form:
\begin{center}$
\begin{array}{|l|l|l|l|l|}
\hline
\,\,\,\,\beta_1=\!(1,2,2\dots,2,1,1)\!\! &\!\! \beta_2  =\! (1,1,2,\dots,2,1,1)\!\! &\!\!  \dots\! \! &\!\! \beta_{n-2}=\!(1,1,1\dots,1,1,1)\!\! \\
\hline
\!\!\beta_{n-1}=\!(1,1,1\dots,1,0,1)\!\! &\!\! \beta_{n}=\!(1,1,1,\dots,1,1,0)\!\! &\!\!  \dots \!\!& \,\,\, \beta_{N}=\!(1,0,0\dots,0,0,0)\!\! \\
\hline	
\end{array}$
\end{center} 
The Hasse diagram has no $k$-chain. In addition in $\overline{D}_{\omega_1}$ there are only co-chains of cardinality at most 1, except for one with cardinality 2.
\begin{center}
\begin{tikzpicture}
\node (1) at (0,0) {$\beta_1$};
  \node (2) at (1,0) {$\beta_2$};
	\node (3) at (2,0) {$\beta_3$};
	\node (4) at (3,0) {$...$};
	\node (5) at (4.25,0) {$\beta_{n-2}$};
	\node (6) at (5.25,1) {$\beta_{n-1}$};
	\node (7) at (5.25,-1) {$\beta_{n}$};
	\node (10) at (6.25,0) {$\beta_{n+1}$};
	\node (11) at (7.75,0) {$\beta_{n+2}$};
	\node (8) at (9,0) {$...$};
	\node (9) at (10,0) {$\beta_N.$};
	\draw[->] (1) edge node[above] {\tiny{2}} (2);
	\draw[->] (2) edge node[above] {\tiny{3}} (3);
	\draw[->] (3) edge node[above] {\tiny{4}} (4);
	\draw[->] (4) edge node[above] {\tiny{n-2}} (5);
	\draw[->] (5) edge node[left] {\tiny{n-1}} (6);
	\draw[->] (5) edge node[left] {\tiny{n}} (7);
	\draw[->] (6) edge node[left] {\tiny{n}} (10);
	\draw[->] (7) edge node[left] {\tiny{n-1}} (10);
	\draw[->] (10) edge node[above] {\tiny{n-2}} (11);
	\draw[->] (11) edge node[above] {\tiny{n-3}} (8);
	\draw[->] (8) edge node[above] {\tiny{2}} (9);
\end{tikzpicture}
\end{center}
Associated to this diagram we get the following polytope for $m \in \Z_{\geq 0}$:
\begin{equation*}
\pmw=\left\{ \textbf{x} \in \R_{\geq 0}^{N}\mid
\begin{aligned}
x_{1} +  \dots + x_{{n-2}} &+ x_{{n-1}}\!\!\!\!\! &+\, x_{{n+1}} + \dots + x_{N} \leq m \\
x_{1} + \dots + x_{{n-2}} &+ x_{{n}}\!\!\!\!\!\! &+\, x_{{n+1}}+  \dots +  x_{N} \leq m \\
\end{aligned}
\right\}.
\end{equation*}
By Corollary $\ref{Spanvmw}$ the elements
$\B_{\omega_1}=\{v_{\w_1},f_{1}v_{\w_1},f_{2}v_{\w_1}, \dots ,f_{N}v_{\w_1},f_{{n-1}}f_{{n}}v_{\w_1}\}$
span $V(\w_1)$ and with \cite[p. 280]{Car05} we have $\dim V(\w_1) = 2n$. From these observations we get immediately.
\begin{prop}
The vectors $f^{\bs}v_{\w_1},\bs \in S(\w_1)$ are a FFL basis of $V(\w_1).\,\hfill \Box$
\end{prop}
\vspace{2mm}

For most of the proofs of the statements in the case $\w=\w_{n-1},\w_{n}$ we will refer to the proofs of the corresponding statements for type $\mathtt{B_n}$. 
\vspace{2mm}

Now we consider the case $\omega=\omega_{n-1}$. For further considerations it will be convenient to describe the roots and fundamental weights of $\Lg$ in terms of an orthogonal basis $\{\varepsilon_i \mid 1\leq i\leq n\}$. Then $\rrnme$ is given by
\begin{equation}\label{relrootsDnm1}
\{\varepsilon_{i,j}=\varepsilon_{i}+\varepsilon_{j}\mid 1\leq i< j\leq n-1\}\cup \{\varepsilon_{k,\overline{n}}=\varepsilon_{k}-\varepsilon_{n}\mid 1\leq k\leq n-1\}.
\end{equation}
The total order on $\rrnme$ is defined like in the $\mathtt{B_n}$, $\w_n$-case (see Figure \ref{hasseB4B5}).
The elements of $\rrnme$ correspond to $\varepsilon_{i,j}= \sum_{r=i}^{j-1}\al_r+2\sum_{r=j}^{n-2}\al_r+\al_{n-1}+\al_n$ and $\varepsilon_{k,\overline{n}}= \sum_{r=k,}^{n-1} \al_r$. The highest weight of $V(\omega_{n-1})$ has the description $\omega_{n-1}=\frac{1}{2}\left(\sum_{r=1}^{n-1} \varepsilon_r -\varepsilon_n \right)$. Further the lowest weight is $-\omega_{n-1}=-\frac{1}{2}\left(\sum_{r=1}^{n-1} \varepsilon_r -\varepsilon_n \right)$. With this observation, the fact that $\w_{n-1}$ is minuscule and $\eqref{relrootsDnm1}$ we see that 
\begin{equation*}\label{basisDnme}
\B_{V(\omega_{n-1})}=\left\{f_\alpha v_{\w_{n-1}} \mid \alpha = \frac{1}{2} \sum_{r=1}^{n}l_r\varepsilon_r, l_r = \pm 1,\ \forall 1\leq r\leq n,\ 2\nmid\#\{l_r\mid l_r=-1\}\right\}
\end{equation*}
is a basis of $V(\omega_{n-1})$. We note that $|\B_{V(\omega_{n-1})}|=2^{n-1}=\dim V(\omega_{n-1})$.
\begin{rem}\label{remDnme}
Similar arguments as in Remark \ref{remBn} show that the elements $\bps\in\dswnme^{\mathtt{D_n}}$ have two possible forms:
\begin{equation}
(D_1)\ \bps=\{\varepsilon_{k,\overline{n}},\varepsilon_{i_2,j_2},\dots,\varepsilon_{i_r,j_r}\}\ \ \textrm{or}\ \ (D_2)\  \bps=\{\varepsilon_{i_1,j_1},\dots,\varepsilon_{i_t,j_t}\}.
\end{equation}
\end{rem}
We denote with $\textbf{1}_{2\nmid n}:\Z_{\geq 0}\rightarrow \{0,1\}$ (respective $\textbf{1}_{2\mid n}$) the Indicator function for the odd (respective even) integers, which is defined by $\textbf{1}_{2\nmid n}(n)=1$ if $2\nmid n$ (respective $\textbf{1}_{2\mid n}(n)=1$ if $2\mid n$) and $0$ otherwise.
So we can characterize the elements $\bps\in\dswnme^{\mathtt{D_n}}$ as follows
\begin{prop}\label{chproDnme}
For $\bps\in\mathcal{P}(\Delta_{+}^{\omega_{n-1}})$ arbitrary we have:
\begin{equation}\label{dspropDnme}
\bps\in\dswnme^{\mathtt{D_n}} \Leftrightarrow \begin{cases} \bps\ \textrm{is of the form $(D_1)$, with (a) and (b)},\\
\bps\ \textrm{is of the form $(D_2)$, with (b)}.
\end{cases}
\end{equation}
In addition: $\ \bps\in\dswnme^{\mathtt{D_n}} \Rightarrow \begin{cases} s\leq \lceil\frac{n}{2}\rceil-\textbf{1}_{2\nmid n}(n),\ \bps\ \textrm{is of the form $(D_1)$},\\
s\leq \lfloor\frac{n}{2}\rfloor-\textbf{1}_{2\mid n}(n),\ \bps\ \textrm{is of the form $(D_2)$},
\end{cases}$\\
with $s=|\bps|$. The properties (a) and (b) are defined by
\begin{itemize}
\item[\textit{(a)}] $\forall \, 1\leq l\leq s:\ k<i_l<j_l,$
\item[\textit{(b)}] $\forall \, \ail,\aim\in\bps, i_l\leq i_m: i_l<i_m<j_m<j_l.$
\end{itemize}
\end{prop}
\begin{proof}
To prove this statement we adapt the idea of Proposition \ref{chprobBn}. We use exactly the same approach but we consider $\Delta_{+}^{\w_{n-1}}$ of type $\mathtt{D_n}$.\\
To check that that the cardinality $s$ of an arbitrary element $\bps\in \dswnme^{\mathtt{D_n}}$ is bounded, like we claim on the rhs of (\ref{dspropDnme}), we use only fundamental combinatorics, again analogue to the idea of the proof of Proposition \ref{chprobBn}.
\end{proof}
Because of Corollary \ref{Spanvmw} we know that the elements $\{f^{\bs}v_{\omega_{n-1}} \mid \bs \in S({\omega_{n-1}})\}$ span $V({\omega_{n-1}})$ and by Proposition $\ref{bijSWDL}$ there is a bijection between $S({\omega_{n-1}})$ and $\dswnme^{\mathtt{D_n}}$.
We want to show that these elements are linear independent. To achieve that we will show that $|\dswnme^{\mathtt{D_n}}|=\dim V({\omega_{n-1}}) $. To be more explicit:
\begin{prop}\label{proDnme}
$|\dswnme^{\mathtt{D_n}}|=\dim V(\omega_{n-1})=2^{n-1}$.
\end{prop}
\begin{proof}
This is a direct consequence of Lemma \ref{zweihochme} and Proposition \ref{proBn}.
\end{proof}
Proposition \ref{proDnme} implies immediately
\begin{prop}\label{basisDnwnme}
$\B_{\omega_{n-1}} = \{f^{\bs}v_{\w_{n-1}} \mid \bs \in S(\w_{n-1})\}$ is a basis for $V(\w_{n-1}).\hfill \Box$
\end{prop}
\vspace{2mm}

Finally we consider the case $\omega=\omega_n$. For the proofs of the statements in this case we refer to the proofs of the analogous statements in the previous case $\omega=\omega_{n-1}$ and the $\mathtt{B_n}, \omega_n$-case.\\The set of roots $\rrn$, where $\al_n=\varepsilon_{n-1}+\varepsilon_{n}$ is a summand, is given by:
\begin{equation}\label{relrootsDn}
\{\varepsilon_{i,j}=\varepsilon_{i}+\varepsilon_{j} \mid 1\leq i< j\leq n-1\}\cup \{\varepsilon_{k,n}=\varepsilon_{k}+\varepsilon_{n}\mid 1\leq k\leq n-1\}.
\end{equation}
Again the total order on $\rrn$ is defined like in the $\mathtt{B_n}, \w_n$-case (see Figure \ref{hasseB4B5}), where the elements of $\rrn$ correspond to $\varepsilon_{i,j}= \sum_{r=i}^{j-1}\al_r+2\sum_{r=j}^{n-2}\al_r+\al_{n-1}+\al_n$ and $\varepsilon_{k,n}= \sum_{r=k,\ r\neq n-1}^n \al_r$.
The highest weight of $V(\omega_{n})$ has the description $\omega_{n}=\frac{1}{2}\left(\sum_{r=1}^{n} \varepsilon_r\right)$. Further the lowest weight is $-\omega_{n}=-\frac{1}{2}\left(\sum_{r=1}^{n} \varepsilon_r\right)$. As before we see that
\begin{equation}\label{basisDn}
\B_{V(\w_n)}=\left\{f_\alpha v_{\w_n}\mid \alpha = \frac{1}{2} \sum_{r=1}^{n}l_r\varepsilon_r, l_r\{-1,1\},\ \forall 1\leq r\leq n,\ 2\mid \#\{l_r\mid l_r=-1\}\right\}
\end{equation}
is a basis of $V(\omega_n)$. We note that $|\Bwn|=2^{n-1}=\dim V(\omega_n)$.
\begin{rem}\label{remDn}
Similar arguments as in Remark \ref{remBn} show that the elements $\bps\in\dswn^{\mathtt{D_n}}$ have two possible forms:

\begin{equation}
(D_1^*)\ \bps=\{\varepsilon_{k,n},\varepsilon_{i_2,j_2},\dots,\varepsilon_{i_s,j_s}\}\ \ \textrm{and}\ \ (D_2^*)\  \bps=\{\varepsilon_{i_1,j_1},\dots,\varepsilon_{i_s,j_s}\}.
\end{equation}
\end{rem}
So we can characterize the elements $\bps\in\dswn^{\mathtt{D_n}}$ as follows:
\begin{prop}\label{chproDn}
For $\bps\in\mathcal{P}(\Delta_{+}^{\omega_{n}})$ arbitrary we have:
\begin{equation}\label{dspropDn}
\bps\in\dswn^{\mathtt{D_n}} \Leftrightarrow \begin{cases} \bps\ \textrm{is of the form $(D_1^*)$, with (a) and (b)},\\
\bps\ \textrm{is of the form $(D_2^*)$, with (b)}.
\end{cases}
\end{equation}
In addition: $\ \bps\in\dswn^{\mathtt{D_n}} \Rightarrow \begin{cases} s\leq \lceil\frac{n}{2}\rceil-\textbf{1}_{2\nmid n}(n),\ \bps\ \textrm{is of the form $(D_1^*)$},\\
s\leq \lfloor\frac{n}{2}\rfloor-\textbf{1}_{2\mid n}(n),\ \bps\ \textrm{is of the form $(D_2^*)$},
\end{cases}$\\
with $s=|\bps|$. 
The properties (a) and (b) are defined by
\begin{itemize}
\item[\textit{(a)}] $\forall \, 1\leq l\leq s:\ k<i_l<j_l,$
\item[\textit{(b)}] $\forall \, \ail,\aim\in\bps, i_l\leq i_m: i_l<i_m<j_m<j_l.$
\end{itemize}
\end{prop}
\begin{proof}
To prove this statement we refer to the proof of Proposition \ref{chproDnme}.
\end{proof}
Because of Corollary \ref{Spanvmw} we know that the elements of $\dswn^{\mathtt{D_n}}$ span the highest weight module $V(\omega_n)$. But we still have to show that these elements are linear independent. To achieve that we will show:
\begin{prop}\label{proDn}
$|\dswn^{\mathtt{D_n}}|=\dim V(\omega_n)=2^{n-1}$.
\end{prop}
\begin{proof}
This is a direct consequence of Lemma \ref{zweihochme} and Proposition \ref{proBn}.\end{proof}
Proposition \ref{proDn} implies immediately
\begin{prop}\label{basisDnwn}
The set $\B_{\omega_n} = \{f^{\bs}v_{\w_n} \mid \bs \in S(\w_n)\}$ is a basis for $V(\w_n)$.\hfill $\Box$
\end{prop}
The following Lemma gives us a very useful connection between the co-chains of $\Lg$ of type $\mathtt{B_{n-1}}$ and $\mathtt{D_n}$:
\begin{lem}\label{zweihochme}
We have: $|\dswnme^{\mathtt{D_n}}|=|\dswnme^{\mathtt{B_{n-1}}}|$ and $|\dswn^{\mathtt{D_n}}|=|\dswnme^{\mathtt{B_{n-1}}}|$.
\end{lem}
\begin{proof}
We only use basic combinatorics to prove this statement.
\end{proof}

\subsection{Type \texorpdfstring{$\mathtt{E_6}$}{E6}}
Let $\Lg$ be a simple Lie algebra of type $\mathtt{E_6}$ with associated Dynkin diagram
\begin{center}
  \begin{tikzpicture}[scale=.4]
    \draw (-1,0) node[anchor=east]  {$\mathtt{E_6}$};
    \foreach \x in {0,...,4}
    \draw[thick,xshift=\x cm] (\x cm,0) circle (1 mm);
    \foreach \y in {0,...,3}
    \draw[thick,xshift=\y cm] (\y cm,0) ++(.3 cm, 0) -- +(14 mm,0);
    \draw[thick] (4 cm,2 cm) circle (1 mm);
    \draw[thick] (4 cm, 3mm) -- +(0, 1.4 cm);
    \draw (0,0) node[anchor=north]  {\tiny{1}};
    \draw (2,0) node[anchor=north]  {\tiny{3}};
    \draw (4,0) node[anchor=north]  {\tiny{4}};
    \draw (6,0) node[anchor=north]  {\tiny{5}};
    \draw (8,0) node[anchor=north]  {\tiny{6}};
    \draw (4,3) node[anchor=north]  {\tiny{2}};
  \end{tikzpicture}
\end{center}
We have $\langle \w,\theta^{\vee}\rangle=1\Leftrightarrow \omega = \omega_1,\omega_6$ and first we fix $\w$ to be $\omega_6$.
The set is $\Delta_+^{\omega_6}$ given as follows:
\begin{center} $\begin{array}{|l|l|}
\hline
\beta_1=(1,2,2,3,2,1) & \beta_{9} \,\,=(1,1,1,1,1,1)\\
\beta_2=(1,1,2,3,2,1) & \beta_{10}=(0,1,1,1,1,1) \\
\beta_3=(1,1,2,2,2,1) & \beta_{11}=(1,0,1,1,1,1) \\
\beta_4=(1,1,1,2,2,1) & \beta_{12}=(0,0,1,1,1,1) \\
\beta_5=(1,1,2,2,1,1) & \beta_{13}=(0,1,0,1,1,1) \\
\beta_6=(0,1,1,2,2,1) & \beta_{14}=(0,0,0,1,1,1) \\
\beta_7=(1,1,1,2,1,1) & \beta_{15}=(0,0,0,0,1,1) \\
\beta_8=(0,1,1,2,1,1) & \beta_{16}=(0,0,0,0,0,1) \\
\hline	
\end{array}$ \end{center}
The Hasse diagram $H(\Ln_{\omega_6}^-)_{\mathtt{E_6}}$ has no $k$-chains and the maximal cardinality of a co-chain of $H(\Ln_{\omega_6}^-)_{\mathtt{E_6}}$ is two (see Appendix, Figure \ref{hasseE6}). The associated polytope is given for $m \in \Z_{\geq 0}$ by:
\begin{equation*}
P(m\w_6)=\{ \textbf{x} \in \R_{\geq 0}^{16} \mid \sum_{\beta_j \, \in \, \bp} x_{j} \leq m,\ \forall \bp\in D_{\omega_6} \},
\end{equation*}
in particular see Appendix, Table \ref{polyE6} for the non-redundant inequalities.
\begin{prop} The set $\B_{\omega_6} = \{f^{\bs}v_{\w_6} \mid \bs \in S(\w_6)\}$ is a FLL basis of $V(\w_6).$
\end{prop}
\begin{proof}
The co-chains of the Hasse diagram give us immediately:
\begin{eqnarray*}
\B_{\omega_6}=\{\!\!\!\!\!\!\!\!\!\!\!&&v_{\omega_6}, f_{1}v_{\omega_6},f_{2}v_{\omega_6}, \dots, f_{{16}}v_{\omega_6}, f_{{4}}f_{{5}}v_{\omega_6}, f_{{5}}f_{{6}}v_{\omega_6}, f_{{6}}f_{{7}}v_{\omega_6}, f_{{6}}f_{{9}}v_{\omega_6}, \\&&f_{{8}}f_{{9}}v_{\omega_6},f_{{8}}f_{{10}}v_{\omega_6}, f_{{8}}f_{{11}}v_{\omega_6}, f_{{10}}f_{{11}}v_{\omega_6}, f_{{11}}f_{{13}}v_{\omega_6}, f_{{12}}f_{{13}}v_{\omega_6} \}.
\end{eqnarray*}
Note that there are 27 elements in $\B_{\omega_6}$. By Corollary $\ref{Spanvmw}$, we get that $\B_{\omega_6}$ is a spanning set of $V(\omega_6)$. By \cite[p. 303]{Car05} we have $\dim V(\omega_6) = 27$ and therefore the claim holds.
\end{proof} 
It is shown in Figure \ref{hasseE6} that the Hasse diagrams $H(\mf{n}_{\w_1}^-)_{\mathtt{E_6}}$ and $H(\mf{n}_{\w_6}^-)_{\mathtt{E_6}}$ have a very similar shape. So with same arguments as above we conclude:
\begin{prop} The vectors $f^{\bs}v_{\w_1}$, $\bs \in S(\w_1)$ are a FLL basis of $V(\w_1). \,\Box$
\end{prop}

\subsection{Type \texorpdfstring{$\mathtt{E_7}$}{E7}}
Let $\Lg$ be the simple Lie algebra of type $\mathtt{E_7}$ with associated Dynkin diagram
\begin{center}
  \begin{tikzpicture}[scale=.4]
    \draw (-1,0) node[anchor=east]  {$\mathtt{E_7}$};
    \foreach \x in {0,...,5}
    \draw[thick,xshift=\x cm] (\x cm,0) circle (1 mm);
    \foreach \y in {0,...,4}
    \draw[thick,xshift=\y cm] (\y cm,0) ++(.3 cm, 0) -- +(14 mm,0);
    \draw[thick] (4 cm,2 cm) circle (1 mm);
    \draw[thick] (4 cm, 3mm) -- +(0, 1.4 cm);
    \draw (0,0) node[anchor=north]  {\tiny{1}};
    \draw (2,0) node[anchor=north]  {\tiny{3}};
    \draw (4,0) node[anchor=north]  {\tiny{4}};
    \draw (6,0) node[anchor=north]  {\tiny{5}};
    \draw (8,0) node[anchor=north]  {\tiny{6}};
		\draw (10,0) node[anchor=north]  {\tiny{7}};
    \draw (4,3) node[anchor=north]  {\tiny{2}};
  \end{tikzpicture}
\end{center}
In this case $\omega = \omega_7$ is the only fundamental weight satisfying $\langle \omega,\theta^{\vee}\rangle = 1$.
\begin{center} $\begin{array}{|l|l|l|}
\hline
\beta_1=(2,2,3,4,3,2,1) & \beta_{10}=(1,1,2,3,2,1,1) & \beta_{19}=(1,1,1,1,1,1,1)\\
\beta_2=(1,2,3,4,3,2,1) & \beta_{11}=(1,1,1,2,2,2,1) & \beta_{20}=(0,1,1,1,1,1,1)\\
\beta_3=(1,2,2,4,3,2,1) & \beta_{12}=(1,1,2,2,2,1,1) & \beta_{21}=(1,0,1,1,1,1,1)\\
\beta_4=(1,2,2,3,3,2,1) & \beta_{13}=(0,1,1,2,2,2,1) & \beta_{22}=(0,0,1,1,1,1,1)\\
\beta_5=(1,1,2,3,3,2,1) & \beta_{14}=(1,1,1,2,2,1,1) & \beta_{23}=(0,1,0,1,1,1,1)\\
\beta_6=(1,2,2,3,2,2,1) & \beta_{15}=(1,1,2,2,1,1,1) & \beta_{24}=(0,0,0,1,1,1,1)\\
\beta_7=(1,1,2,3,2,2,1) & \beta_{16}=(0,1,1,2,2,1,1) & \beta_{25}=(0,0,0,0,1,1,1)\\
\beta_8=(1,2,2,3,2,1,1) & \beta_{17}=(1,1,1,2,1,1,1) & \beta_{26}=(0,0,0,0,0,1,1)\\
\beta_9=(1,1,2,2,2,2,1) & \beta_{18}=(0,1,1,2,1,1,1) & \beta_{27}=(0,0,0,0,0,0,1)\\
\hline	
\end{array}$ \end{center}
As in the $\mathtt{E_6}$-case the Hasse diagram has no $k$-chains. In addition there are only co-chains of cardinality at most 2, except for one with cardinality 3 (see Appendix, Figure \ref{hasseE7}).
As before the polytope is defined by the paths in the Hasse diagram. For $m \in \Z_{\geq 0}$ we have:
\begin{equation*}
\pmw=\{ \textbf{x} \in \R_{\geq 0}^{27}\mid \sum_{\beta_j \, \in \, \bp} x_{j} \leq m,\ \forall \bp\in D_{\omega} \}.
\end{equation*}
Because the polytope is defined by 77 non-redundant inequalities we will not state it explicitly.
\begin{prop} The set $\B_{\omega} = \{f^{\bs}v_{\w} \mid \bs \in S(\w)\}$ is a FFL basis of $V(\w)$.
\end{prop}
\begin{proof}
The co-chains of the Hasse diagram give us immediately:
\begin{eqnarray*}
\B_{\omega}=\{\!\!\!\!\!\!\!\!\!\!\!&&v_{\omega}, f_{1}v_{\omega},f_{2}v_{\omega}, \dots, f_{{27}}\vw, f_{{5}}f_{{6}}\vw, f_{{5}}f_{{8}}\vw,f_{{7}}f_{{8}}\vw, f_{{8}}f_{{9}}\vw,\\&& f_{{9}}f_{{10}}\vw,f_{{8}}f_{{11}}\vw, f_{{10}}f_{{11}}\vw, f_{{11}}f_{{12}}\vw, f_{{8}}f_{{13}}\vw, f_{{10}}f_{{13}}\vw,\\&& f_{{12}}f_{{13}}\vw,f_{{13}}f_{{14}}\vw,f_{{11}}f_{{15}}\vw,f_{{13}}f_{{15}}\vw,f_{{14}}f_{{15}}\vw,f_{{15}}f_{{16}}\vw,\\&&f_{{13}}f_{{17}}\vw,f_{{16}}f_{{17}}\vw,f_{{13}}f_{{19}}\vw,f_{{16}}f_{{19}}\vw,f_{{18}}f_{{19}}\vw,f_{{13}}f_{{21}}\vw,\\&&f_{{16}}f_{{21}}\vw,f_{{18}}f_{{21}}\vw,f_{{20}}f_{{21}}\vw,f_{{21}}f_{{23}}\vw,f_{{22}}f_{{23}}\vw,f_{{13}}f_{{14}}f_{{15}}\vw\}.
\end{eqnarray*}
Note that there are 56 elements in $\B_{\omega}$. By Corollary $\ref{Spanvmw}$, we get that this is a spanning set of $V(\omega)$. By \cite[p. 303]{Car05} we have $\dim V(\omega) = 56$ and therefore that $\B_{\omega}$  is a basis.
\end{proof}
\subsection{Type \texorpdfstring{$\mathtt{F_4}$}{F4}}
Let $\Lg$ be the simple Lie algebra of type $\mathtt{F_4}$ with associated Dynkin diagram
\begin{center}
  \begin{tikzpicture}[scale=.4]
    \draw (-3,0) node[anchor=east]  {$\mathtt{F_4}$};
		\draw[thick] (-2 ,0) circle (1mm);
		\draw[thick] (0: -1.7cm) -- +(1.4 cm, 0);
		\draw[thick] (4 ,0) circle (1mm);
    \draw[thick] (0 ,0) circle (1mm);
    \draw[thick] (2 cm,0) circle (1mm);
		\draw[thick] (0: 2.3cm) -- +(1.4 cm, 0);
    \draw[thick] (20: 3mm) -- +(1.5 cm, 0);
		\draw (1.8,0) node[anchor=east] {\small{{$\bf\boldsymbol{<}$}}};
    \draw[thick] (-20: 3 mm) -- +(1.5 cm, 0);
				\draw (-2,0) node[anchor=north]  {\tiny{1}};

		\draw (0,0) node[anchor=north]  {\tiny{2}};
		\draw (2,0) node[anchor=north]  {\tiny{3}};
		\draw (4,0) node[anchor=north]  {\tiny{4}};
  \end{tikzpicture}
\end{center}
The highest root is of the form $\theta=2\alpha_1 +3\al_2+4\al_4+2\al_4$. And we have $\theta^{\vee}=2\alpha_1^{\vee}+3\alpha_2^{\vee}+2\alpha_3^{\vee}+\alpha_4^{\vee}$.  So $\langle \omega,\theta^{\vee}\rangle=1\Leftrightarrow \omega=\w_4$, so we consider the case $\omega=\omega_4$. If we construct $H(\Ln_{\omega}^-)_{\mathtt{F_4}}$ as in Section $\ref{test}$ we get a 3-chain of length 2, but here we are able to solve this problem. Therefore we will change the order of the roots such that we can draw a new diagram without any $k$-chains. As usual we start with the set of roots $\rr$:
\begin{center} $\begin{array}{|l|l|l|}
\hline
\beta_1=(2,3,4,2) & \beta_6=(1,2,3,1) & \beta_{11}=(0,1,2,1) \\
\beta_2=(1,3,4,2) & \beta_7=(1,1,2,2) & \beta_{12}=(1,1,1,1) \\
\beta_3=(1,2,4,2) & \beta_8=(1,2,2,1) & \beta_{13}=(0,1,1,1) \\
\beta_4=(1,2,3,2) & \beta_9=(0,1,2,2) & \beta_{14}=(0,0,1,1) \\
\beta_5=(1,2,2,2) & \beta_{10}=(1,1,2,1) & \beta_{15}=(0,0,0,1) \\
\hline	
\end{array}$ \end{center}
Here we have $\beta_i\succ \beta_j \Leftrightarrow i>j$. With this order we are not able to find relations derived from differential operators (see Section \ref{spanning}), which include the rootvector $f_{{4}}$ (see $\eqref{probichain}$). In order to find relations including $f_4$ we adjust the order on the roots in this case as follows:\\
\begin{equation*}
\beta_1 \prec \beta_2 \prec \beta_3 \prec \beta_5 \prec \beta_4\prec \beta_6\prec \beta_7\prec \cdots\prec \beta_{15}.
\end{equation*}
So we just switched the positions of $\beta_4$ and $\beta_5$.
Now we consider our Hasse diagram constructed as in Section $\ref{test}$ and the diagram we obtain by changing the order of the roots and by using differential operators corresponding to non-simple roots, see Figure \ref{hasseF4}.\\
The idea of this adjustment is that we split up the $3$-chain by using the non-simple differential operators mentioned above. After this we still want to get as many roots as possible on each path. To do so we use two non-simple differential operators: $\partial_{0110}=\partial_{\al_2+\al_3} $ and $\partial_{0011}=\partial_{\al_3+\al_4}$. In the adjusted diagram also occurs a directed edge labeled by $\mathfrak a$ from $\beta_2$ to $\beta_5$ and a second labeled by $\mathfrak b$ from $\beta_5$ to $\beta_4$. We cannot label the second edge with a differential operator, because there is no element $\gamma\in\Delta_+$ satisfying: $\beta_5-\gamma=\beta_4$. We will use the following observation to explain the existence of these edges and labels. For $a_0,b_0 \in \C \setminus \{0\}$ we have:
\begin{align*}
\partial_{3}^{n_2+2n_3}\partial_{2}^{n_2+n_3}\partial_{1}^{n_1}f_1^{m+1}=&\ \partial_3^{n_2+2n_3}(a_0 f_3^{n_2+n_3}f_2^{n_1-n_2-n_3}f_1^{m+1-n_1})\\
=&\  b_0 f_5^{n_3}f_4^{n_2}f_2^{n_1-n_2-n_3}f_1^{m+1-n_1}+ \text{smaller terms}.
\end{align*}
That means we can replace in the path consisting of $\beta_1$, $\beta_2$, $\beta_3$ and $\beta_4$ the root $\beta_3$ by $\beta_5$. Furthermore the differential operators $\partial_{\al_2+\al_3} $ and $\partial_{\al_3+\al_4}$ have no influence on $\beta_5$. That is the reason for the directed edge labeled by $\mathfrak b$ from $\beta_{5}$ to $\beta_{4}$. The reason for the edge between $\beta_2$ and $\beta_5$ is that we want to visualize the co-chain which we construct at this point. We label this edge with $\mathfrak a$ to prevent confusions about the applied differential operators, where $\mathfrak a$ corresponds to $\partial_{3}^{n_2+2n_3}$. We note that the changed Hasse diagram gives us directly the inequalities of $P(\lambda)$, but in this case it does not describe in general the action of the differential operators.\\ If we now follow our standard procedure with the adjusted Hasse diagram the next step is to define the polytope associated to the set of Dyck paths $D_{\omega}$ and $m\in\Z_{\geq 0}$:
\begin{equation*}
\pmw=\{\bx\in \R_{\geq 0}^{15}\mid \sum_{\beta_j \in \bp} x_{j} \leq m,\ \forall \bp\in D_{\w}\}.
\end{equation*}
More explicitly: $\pmw$ is the set of all elements $\bx\in\R_{\geq 0}^{15}$ such that the 12 inequalities, which can be found in the Appendix, Figure \ref{polyF4}, are satisfied.\\ The set $\Bw=\{f^{\bs}\vw \mid \bs\in \sw\}\subset \Vw$ is given by:
\begin{align*}
\Bw=& \{\vw,f_{{1}}\vw,f_{{2}}\vw,\dots,f_{{15}}\vw,f_{{3}}f_{{5}}\vw,f_{{4}}f_{{6}}\vw,f_{{5}}f_{{6}}\vw,f_{{6}}f_{{7}}\vw,\\
&\ f_{{7}}f_{{8}}\vw,f_{{6}}f_{{9}}\vw,f_{{8}}f_{{9}}\vw,f_{{9}}f_{{10}}\vw,f_{{9}}f_{{12}}\vw,f_{{11}}f_{{12}}\vw
\}.
\end{align*}
\begin{prop}
The set $\B_{\omega} = \{f^{\bs}v_{\w} \mid \bs \in S(\w)\}$ is a FFL basis of $V(\w)$.
\end{prop}
\begin{proof}
By Corollary \ref{Spanvmw} we conclude that $\Bw$ spans the vector space $\Vw$. In addition we know by \cite[p. 303]{Car05} that $\dim V(\omega)=26=|\Bw|$. Hence the set $\Bw$ is a basis.
\end{proof}

\begin{center}
\begin{minipage}{\linewidth}
\centering
\begin{tikzpicture}[scale=1]
  \node (1) at (-3.75,-3) {$\beta_1$};
  \node (2) at (-2.5,-3) {$\beta_2$};
  \node (3) at (-1.25,-3) {$\beta_3$};
  \node (4) at (0,-3) {$\beta_4$};
  \node (5) at (-1,-4) {$\beta_5$};
  \node (6) at (1,-4) {$\beta_6$};
  \node (7) at (-2,-5) {$\beta_7$};
	\node (8) at (0,-5) {$\beta_8$};
  \node (9) at (-3,-6) {$\beta_9$};
  \node (10) at (-1,-6) {$\beta_{10}$};
  \node (11) at (-2,-7) {$\beta_{11}$};
  \node (12) at (0,-7) {$\beta_{12}$};
  \node (13) at (-1,-8) {$\beta_{13}$};
  \node (14) at (0.25,-8) {$\beta_{14}$};
  \node (15) at (1.5,-8) {$\beta_{15}$};

	\node (a) at (2,-5) {$\leadsto$};
	\node (b) at (2.5,-5) {};
	
	\node (16) at (3.75,-2) {$\beta_1$};
  \node (17) at (5,-2) {$\beta_2$};
  \node (18) at (4,-3) {$\beta_3$};
  \node (19) at (6,-3) {$\beta_5$};
  \node (20) at (3,-4) {$\beta_6$};
  \node (21) at (5,-4) {$\beta_4$};
  \node (22) at (4,-5) {$\beta_8$};
	\node (23) at (6,-5) {$\beta_7$};
  \node (24) at (5,-6) {$\beta_{10}$};
  \node (25) at (7,-6) {$\beta_{9}$};
  \node (26) at (4,-7) {$\beta_{12}$};
  \node (27) at (6,-7) {$\beta_{11}$};
  \node (28) at (5,-8) {$\beta_{13}$};
  \node (29) at (6.25,-8) {$\beta_{14}$};
  \node (30) at (7.5,-8) {$\beta_{15}$};

   \draw[->] (1) edge node[above] {\tiny{1}} (2);
   \draw[->] (2) edge node[above] {\tiny{2}} (3);
   \draw[->] (3) edge node[above] {\tiny{3}} (4);
   \draw[->] (4) edge node[left,pos=0.2] {\tiny{3}} (5);
   \draw[->] (4) edge node[right,pos=0.2] {\tiny{4}} (6);
   \draw[->] (5) edge node[left,pos=0.2] {\tiny{2}} (7);
   \draw[->] (5) edge node[right,pos=0.2] {\tiny{4}} (8);
   \draw[->] (6) edge node[left,pos=0.2] {\tiny{3}} (8);
   \draw[->] (7) edge node[left,pos=0.2] {\tiny{1}} (9);
   \draw[->] (7) edge node[right,pos=0.2] {\tiny{4}} (10);
   \draw[->] (8) edge node[left,pos=0.2] {\tiny{2}} (10);
   \draw[->] (9) edge node[right,pos=0.2] {\tiny{4}} (11);
   \draw[->] (10) edge node[left,pos=0.2] {\tiny{1}} (11);
   \draw[->] (10) edge node[right,pos=0.2] {\tiny{3}} (12);
   \draw[->] (11) edge node[right,pos=0.2] {\tiny{3}} (13);
   \draw[->] (12) edge node[left,pos=0.2] {\tiny{1}} (13);
   \draw[->] (13) edge node[above] {\tiny{2}} (14);
   \draw[->] (14) edge node[above] {\tiny{3}} (15);


  \draw[->] (16) edge node[above] {\tiny{1}} (17);
   \draw[->] (17) edge node[left,pos=0.2] {\tiny{2}} (18);
   \draw[->] (17) edge node[right,pos=0.2] {\tiny{$\mathfrak{a}$}} (19);
   \draw[->] (18) edge node[left,pos=0.2] {\tiny{0011}} (20);
   \draw[->] (18) edge node[right,pos=0.2] {\tiny{3}} (21);
   \draw[->] (19) edge node[left,pos=0.2] {\tiny{$\mathfrak{b}$}} (21);
   \draw[->] (20) edge node[right,pos=0.2] {\tiny{3}} (22);
   \draw[->] (21) edge node[left,pos=0.2] {\tiny{0011}} (22);
   \draw[->] (21) edge node[right,pos=0.2] {\tiny{0110}} (23);
   \draw[->] (22) edge node[right,pos=0.2] {\tiny{2}} (24);
   \draw[->] (23) edge node[left,pos=0.2] {\tiny{4}} (24);
   \draw[->] (23) edge node[right,pos=0.2] {\tiny{1}} (25);
   \draw[->] (24) edge node[left,pos=0.2] {\tiny{3}} (26);
   \draw[->] (24) edge node[right,pos=0.2] {\tiny{1}} (27);
   \draw[->] (25) edge node[left,pos=0.2] {\tiny{4}} (27);
   \draw[->] (26) edge node[right,pos=0.2] {\tiny{1}} (28);
   \draw[->] (27) edge node[left,pos=0.2] {\tiny{3}} (28);
   \draw[->] (28) edge node[above] {\tiny{2}} (29);
   \draw[->] (29) edge node[above] {\tiny{3}} (30);
\end{tikzpicture} 
\captionof{figure}{$H(\Ln_{\omega}^-)_{\mathtt{F_4}}$}
    \label{hasseF4}
  \end{minipage}
\end{center}

\subsection{Type \texorpdfstring{$\mathtt{G_2}$}{G2}}
Let $\Lg$ be the simple Lie algebra of type $\mathtt{G_2}$ with associated Dynkin diagram
\begin{center}
  \begin{tikzpicture}[scale=.4]
    \draw (-1,0) node[anchor=east]  {$\mathtt{G_2}$};
    \draw[thick] (0 ,0) circle (1mm);
    \draw[thick] (2 cm,0) circle (1mm);
    \draw[thick] (20: 3mm) -- +(1.5 cm, 0);
    \draw[thick] (0: 2.8 mm) -- +(0.7 cm, 0);
		\draw (1.8,0) node[anchor=east] {\small{{$\bf\boldsymbol{<}$}}};
    \draw[thick] (0.9 cm, 0) -- +(0.88 cm, 0);
    \draw[thick] (-20: 3 mm) -- +(1.5 cm, 0);
		\draw (0,0) node[anchor=north]  {\tiny{1}};
		\draw (2,0) node[anchor=north]  {\tiny{2}};
  \end{tikzpicture}
\end{center}
For the highest root $\theta = 3 \alpha_1 + 2 \al_2$ we have $\theta^{\vee} = \al_1^{\vee} + 2\al_2^{\vee}$.
So we consider $\omega = \omega_1$. In this case the Hasse diagram has one $1$-chain. We will rewrite $H(\Ln_{\omega}^-)_{\mathtt{G_2}}$ into a diagram without any $k$-chains. Consider the following order on $\rr$:
\begin{equation*}
\beta_1 \prec \beta_2 \prec \beta_4 \prec \beta_5 \prec \beta_3,
\end{equation*}
where
\begin{center} $\begin{array}{|l|l|l|l|l|}
\hline
\beta_1=(3,2) & \beta_2=(3,1) & \beta_{3}=(2,1) & \beta_{4}=(1,1) & \beta_{5} =(1,0) \\
\hline	
\end{array}$\end{center}
So we obtain the following diagrams:
\begin{center}
\begin{tikzpicture}
  \node (1) at (-6.75,4) {$\beta_1$};
  \node (2) at (-5.5,4) {$\beta_2$};
  \node (3) at (-4.25,4) {$\beta_3$};
  \node (4) at (-3,4) {$\beta_4$};
  \node (5) at (-1.75,4) {$\beta_5$};
	\node (6) at (0,4) {$\leadsto$};
	\node (7) at (0.5,4) {};
	\node (8) at (1.75,4) {$\beta_1$};
  \node (9) at (3,4) {$\beta_3$};
  \node (11) at (4,5) {$\beta_2$};
  \node (12) at (4,3) {$\beta_4$};
	\node (13) at (5,4) {$\beta_5$,};
   \draw[->] (1) edge node[above] {\tiny{2}} (2);
   \draw[->] (2) edge node[above] {\tiny{1}} (3);
   \draw[->] (4) edge node[above] {\tiny{2}} (5);
   \draw[->] (3) edge node[above] {\tiny{1}} (4);


   \draw[->] (8) edge node[above] {\tiny{11}} (9);
   \draw[->] (9) edge node[right, pos=0.25] {\tiny{21}} (12);
   \draw[->] (12) edge node[right, pos=0.25] {\tiny{2}} (13);
   \draw[->] (9) edge node[right, pos=0.25] {\tiny{2}} (11);
   \draw[->] (11) edge node[right, pos=0.25] {\tiny{21}} (13);
\end{tikzpicture}
\end{center}
Very similar arguments as in the case of $\mathtt{B_3}, \w_{1}$ show that we can apply the results of section $\ref{spanning}$ to the rewritten diagram. We  consider the polytope associated to the new diagram for $m \in \Z_{\geq 0}$:
\begin{equation*}\pmw=\left\{\textbf{x} \in \R_{\geq 0}^{N}\mid
\begin{aligned}
x_{1} + x_{2} + x_{3} + x_{5} \leq m \\
x_{1} + x_{3} + x_{4} + x_{5} \leq m \\
\end{aligned}
\right\}.\end{equation*}
By Section $\ref{spanning}$ the elements $\vw$, $f_{1}\vw$, $f_{2}\vw$, $f_{3}\vw$, $f_{4}\vw$, $f_{5}\vw, f_{2}f_{4}\vw$ span $\Vw$ and with \cite[p. 316]{Car05} we know $\dim \Vw = 7$. 
\begin{prop}
The set $\B_{\omega} = \{f^{\bs}v_{\w} \mid \bs \in S(\w)\}$ is a FFL basis of $V(\w)$.\hfill $\Box$
\end{prop}
\begin{proof}
The previous observations imply that $\{f^{\bs}v_{\w} \mid \bs \in S(\w)\}$ is a  basis of $V(\w)$. It remains to show that $P(\omega)$ is a normal polytope.\\ Like in the case of $(\mathtt{B_n}, \w_1)$ we have to change the order of the roots to apply Section $\ref{helpful}$. One possible order is $\beta_1 \prec \beta_3 \prec \beta_4 \prec \beta_2 \prec \beta_5$. Using this order we conclude that $P(\w)$ is a normal polytope.
\end{proof}

\section{Linear Independence}\label{linearinde} 
We refer to the notation of Section \ref{test}, especially Subsection \ref{applications}. Throughout the Section we assume the vectors $f^{\bp}\vl\in\Vl$ to be ordered as in $\eqref{fixedorder}$ and we fix $\lambda = m\omega$ where $\omega$ appears in Table \ref{fundamentalweights}.\\
We want to investigate the connection between our polytope $P(\lambda)$ and the essential multi-exponents. Via this connection and with the results from Section \ref{spanning} we want to prove that $\{f^{\bs}\vl\mid \bs\in S(\lambda)\}$ provides a FFL basis of $\Vl$.\\
Note that one can define essential monomials like in Subsection \ref{applications} for an arbitrary total order on $\rrl$. Hence for the following statements it is very important that we kept in Subsection \ref{applications} the total order introduced in Subsection \ref{Definitions}. 
\setcounter{thm}{0}
\setcounter{subsection}{1}
\begin{lem}\label{lemequa}
If $\{f^{\bs}v_{\lambda} \mid \bs \in \smw\}$ is linear independent in $\Vmw$, then 
\begin{equation*}
\smw = \es (\Vmw).
\end{equation*}
\end{lem}
\begin{proof}
Let $\bs \in \es (\Vmw) = \{\bp \in \Z_{\geq 0}^N \mid f^{\bp}\vmw\notin \textrm{span}\{f^{\bq}\vmw \mid \bq \prec \bp\}\}$ and assume $\bs \notin \smw$. By Proposition $\ref{important}$ we can rewrite $f^{\bs}$ such that
\begin{equation*}
f^{\bs}\vmw = \underset{\bt \, \prec \, \bs}{\sum} c_{\bt} f^{\bt}\vmw, c_{\bt} \in \C
\end{equation*}
and we get immediately a contradiction, so $\bs \in \smw$.\\
Now let $\bs \in \smw$ and $\bs \notin \es (\Vmw)$. Then $f^{\bs}\vmw \in \textrm{span}\{f^{\bq}\vmw \mid \bq \prec \bs\}$ and so 
\begin{equation}\label{bla23}
f^{\bs}\vmw = \underset{\bq\, \prec \, \bs}{\sum} c_{\bq}f^{\bq}\vmw ,
\end{equation}
for some $c_{\bq} \in \C$.
We rewrite each $f^{\bq}v_\lambda$ in terms of basis elements $f^{\bt}v_\lambda, \bt \in \smw$. Because of the linear independence all prefactors are zero, meaning that ${\bs} = 0$. But this is a contradiction to $\bs \notin \es \Vmw$.
\end{proof}
\begin{thm}\label{theoremBB}
The elements $\{f^{\bs}(v_{\lambda -\omega} \otimes v_{\omega}) \mid \bs \in \smw \} \, \subset V(\lambda - \omega) \odot V(\omega)$ are linearly independent and $\mathbb{B}_{\lambda} = \{f^{\bs}v_{\lambda} \mid \bs \in \smw\}$ is a FFL basis of $V(\lambda)$. 
\end{thm}
\begin{proof} We want to prove this statement by induction on $m \in \Z_{\geq 1}$. For $m = 1$ we saw in Section $\ref{uebergreifend2}$ that $\mathbb{B}_{\omega} = \{f^{\bs}\vw \mid \bs \in \sw\}$ is a basis for $V(\omega)$ in each type.\\
So let $m \in \Z_{\geq 2}$ be arbitrary and we assume that the claim holds for all $m' < m$. By induction the set $\mathbb{B}_{\lambda - \omega} = \{f^{\bs}v_{\lambda -\omega} \mid \bs \in S(\lambda - \omega)\}$ is a basis of $V(\lambda -\omega)$. So we have by Lemma $\ref{lemequa}$
\begin{equation}\label{Induction}
\es (V(\lambda - \omega) = S(\lambda - \omega) \text{ and } \es (\Vw) = \sw.
\end{equation}
But then with \cite[Prop. 1.11]{FFoL13a}: 
\begin{equation*}
\es (V(\lambda - \omega) + \es (\Vw) \subset \es (V(\lambda - \omega) \odot V(\omega))
\end{equation*}
and so we get the linearly independence of
\begin{equation*}
\{f^{\bs}(v_{\lambda - \omega} \otimes v_{\omega}) \mid \bs \in \es (V(\lambda - \omega) + \es (\Vw)\} \subset V(\lambda - \omega) \odot V(\omega)
\end{equation*}
With the equalities in $\eqref{Induction}$ and Section $\ref{helpful}$ where we proved $S(\lambda - \omega) + \sw = \smw$, we conclude that the set
\begin{equation*}
\{f^{\bs}(v_{\lambda - \omega} \otimes v_{\omega}) \mid \bs \in \smw\} \subset V(\lambda - \omega) \odot V(\omega)
\end{equation*}
is linearly independent. So we get $\dim \Vmw \geq |\smw|$ and with the spanning property Corollary $\ref{Spanvmw}$ we have
$|\smw| \geq \dim \Vmw.$ Finally we get 
\begin{equation*}
|\smw| = \dim \Vmw
\end{equation*}
and that $\mathbb{B}_{\lambda}$ is a FFL basis of $\Vmw$ as claimed.
\end{proof}
\begin{rem}
The basis $\mathbb{B}_{\lambda}$ is a monomial basis, so we get an induced FFL basis of $\Vmwa$.
\end{rem}
\begin{thm}\label{thereomAA}
Let $\Vmwa \cong \Sn /\imw$. Then the ideal $\imw$ is generated by
\begin{equation*}
\Unp \circ \mathrm{span}\{f_{\beta}^{\langle \lambda,\beta^{\vee} \rangle + 1} \mid \beta \in \Delta_{+}\}
\end{equation*}
as $\Sn$ ideal.\\
In particular we have that $\imw = \Sn(\Unp \circ \mathrm{span}\{f_\beta, f_{\theta}^{m+1} \mid \beta \in \Delta_{+}\!\setminus\Delta_{+}^{\lambda}\})$.
\end{thm}
\begin{proof} Let $I$ be an Ideal generated by $\Unp \circ \textrm{span}\{f_{\beta}^{\langle \lambda,\beta^{\vee}\rangle + 1} \mid \beta \in \Delta_{+}\}$ as  $\Sn$ ideal. By $Iv_{\lambda} = \{0\}$ we have $I \subset \Imw$, so there is a canonical projection:
\begin{equation*}
\phi: \Sn / I \rightarrow \Sn / \imw \cong \Vmwa
\end{equation*}
Let $f^{\bt} = 0$ in $\Sn / \imw$. Because we have a basis of $\Vmwa$ we can rewrite $f^{\bt}$ as follows:
\begin{equation}\label{unbedeutend1}
f^{\bt} = \underset{\bs \, \in \, \smw}{\sum} c_{\bs} f^{\bs} \in \Sn / \imw
\end{equation}
for some $c_{\bs} \in \C$. In the proof of Theorem $\ref{thmspan}$ we already saw that the relations obtained by $I$ are sufficient to achieve $\eqref{unbedeutend1}$. So $0 = f^{\bt} = \underset{\bs \, \in \, \smw}{\sum} c_{\bs} f^{\bs} \in \Sn / I$. Therefore $\phi$ is injective.\\
In the proof of Proposition $\ref{important}$
we do not need powers $f_{\beta}$ for $\beta \in \rrl \setminus \{\theta\}$.
\end{proof}
\section*{Appendix}\label{appendix}
In this section we want to present the Hasse diagrams $H(\Ln_{\omega_6}^-)_{\mathtt{E_6}}$ and $H(\Ln_{\omega_7}^-)_{\mathtt{E_7}}$ for a better understanding of our work. In addition to illustrate the ordering of the roots for the classical types $\mathtt{A_n}$, $\mathtt{B_n}$ and $\mathtt{D_n}$ we give in Figure \ref{hasseA4} the complete Hasse diagram of $\mf{sl}_5$ and in Figure \ref{hasseB4B5} a concrete example of the Hasse diagram in the $\mathtt{D_n}$, $\omega_n$-case, for $n=5,6$. We remark that the shape of the Hasse diagram $H(\mf{n}_{\omega_{n-1}}^-)_{\mf{so}_{2n}}$ and $H(\mf{n}_{\omega_{n}}^-)_{\mf{so}_{2n}}$ is equal to the shape of $H(\mf{n}_{\omega_{n-1}}^-)_{\mf{so}_{2(n-1)+1}}$. So Figure \ref{hasseB4B5} shows also the shape of the Hasse diagrams $H(\mf{n}_{\omega_{4}}^-)_{\mf{so}_{10}}$, $H(\mf{n}_{\omega_{5}}^-)_{\mf{so}_{10}}$ and $H(\mf{n}_{\omega_{5}}^-)_{\mf{so}_{12}}$, $H(\mf{n}_{\omega_{6}}^-)_{\mf{so}_{12}}$. Furthermore we state the explicit polytopes for $\mathtt{E_6}$ (Table \ref{polyE6}), $\mathtt{F_4}$ (Table \ref{polyF4}) and for the special cases: $\mathtt{B_4}$, $\w_4$ ($\mathtt{D_5}$, $\w_4$) and $\mathtt{D_5}$ $\w_5$ (Table \ref{polyB4}).
\begin{center}
\begin{minipage}{\linewidth}
\centering
  \begin{tikzpicture}
  \node (1) at (0,0) {$\beta_1$};
  \node (2) at (-1,-1) {$\beta_2$};
  \node (3) at (1,-1) {$\beta_3$};
  \node (4) at (-2,-2) {$\beta_4$};
  \node (5) at (0,-2) {$\beta_5$};
  \node (6) at (2,-2) {$\beta_6$};
  \node (7) at (-3,-3) {$\beta_7$};
  \node (8) at (-1,-3) {$\beta_8$};
  \node (9) at (1,-3) {$\beta_9$};
  \node (10) at (3,-3) {$\beta_{10}$};

   \draw[->] (1) edge node[left, pos=0.25] {\tiny{1}} (2);
   \draw[->] (1) edge node[right, pos=0.25] {\tiny{4}} (3);
   \draw[->] (2) edge node[left, pos=0.25] {\tiny{2}}(4);
   \draw[->] (2) edge node[right, pos=0.25] {\tiny{4}} (5);
   \draw[->] (3) edge node[left, pos=0.25] {\tiny{1}} (5);
   \draw[->] (3) edge node[right, pos=0.25] {\tiny{3}} (6);
   \draw[->] (4) edge node[left, pos=0.25] {\tiny{3}} (7);
   \draw[->] (4) edge node[right, pos=0.25] {\tiny{4}} (8);
   \draw[->] (5) edge node[left, pos=0.25] {\tiny{2}} (8);
   \draw[->] (5) edge node[right, pos=0.25] {\tiny{3}} (9);
   \draw[->] (6) edge node[left, pos=0.25] {\tiny{1}} (9);
   \draw[->] (6) edge node[right, pos=0.25] {\tiny{2}} (10);

\end{tikzpicture}
    \captionof{figure}{Complete Hasse diagram of $\Lg=\mf{sl}_5$.}
    \label{hasseA4}
  \end{minipage}
\end{center}

\begin{center}
\begin{minipage}{\linewidth}
\centering
  \begin{tikzpicture}
  \node (1) at (0,0) {$\beta_1$};
  \node (2) at (1,-1) {$\beta_2$};
  \node (3) at (0,-2) {$\beta_3$};
  \node (4) at (2,-2) {$\beta_4$};
  \node (5) at (1,-3) {$\beta_5$};
  \node (6) at (3,-3) {$\beta_6$};
  \node (7) at (0,-4) {$\beta_7$};
  \node (8) at (2,-4) {$\beta_8$};
  \node (9) at (1,-5) {$\beta_9$};
  \node (10) at (0,-6) {$\beta_{10}$};

	\node (11) at (5,1) {$\beta_1$};
  \node (12) at (6,0) {$\beta_2$};
  \node (13) at (5,-1) {$\beta_3$};
  \node (14) at (7,-1) {$\beta_4$};
  \node (15) at (6,-2) {$\beta_5$};
  \node (16) at (8,-2) {$\beta_6$};
  \node (17) at (5,-3) {$\beta_7$};
  \node (18) at (7,-3) {$\beta_8$};
  \node (19) at (9,-3) {$\beta_9$};
  \node (20) at (6,-4) {$\beta_{10}$};
  \node (21) at (8,-4) {$\beta_{11}$};
  \node (22) at (5,-5) {$\beta_{12}$};
  \node (23) at (7,-5) {$\beta_{13}$};
  \node (24) at (6,-6) {$\beta_{14}$};
  \node (25) at (5,-7) {$\beta_{15}$};

   \draw[->] (1) edge node[right, pos=0.25] {\tiny{2}} (2);
   \draw[->] (2) edge node[left, pos=0.25] {\tiny{1}} (3);
   \draw[->] (2) edge node[right, pos=0.25] {\tiny{3}}(4);
   \draw[->] (3) edge node[right, pos=0.25] {\tiny{3}} (5);
   \draw[->] (4) edge node[left, pos=0.25] {\tiny{1}} (5);
   \draw[->] (4) edge node[right, pos=0.25] {\tiny{4}} (6);
   \draw[->] (5) edge node[left, pos=0.25] {\tiny{2}} (7);
   \draw[->] (5) edge node[right, pos=0.25] {\tiny{4}} (8);
   \draw[->] (6) edge node[left, pos=0.25] {\tiny{1}} (8);
   \draw[->] (7) edge node[right, pos=0.25] {\tiny{4}} (9);
   \draw[->] (8) edge node[left, pos=0.25] {\tiny{2}} (9);
   \draw[->] (9) edge node[left, pos=0.25] {\tiny{3}} (10);

   \draw[->] (11) edge node[right, pos=0.25] {\tiny{2}} (12);
   \draw[->] (12) edge node[left, pos=0.25] {\tiny{1}} (13);
   \draw[->] (12) edge node[right, pos=0.25] {\tiny{3}}(14);
   \draw[->] (13) edge node[right, pos=0.25] {\tiny{3}} (15);
   \draw[->] (14) edge node[left, pos=0.25] {\tiny{1}} (15);
   \draw[->] (14) edge node[right, pos=0.25] {\tiny{4}} (16);
   \draw[->] (15) edge node[left, pos=0.25] {\tiny{2}} (17);
   \draw[->] (15) edge node[right, pos=0.25] {\tiny{4}} (18);
   \draw[->] (16) edge node[left, pos=0.25] {\tiny{1}} (18);
   \draw[->] (16) edge node[right, pos=0.25] {\tiny{5}} (19);
   \draw[->] (17) edge node[right, pos=0.25] {\tiny{4}} (20);
   \draw[->] (18) edge node[left, pos=0.25] {\tiny{2}} (20);
   \draw[->] (18) edge node[right, pos=0.25] {\tiny{5}} (21);
   \draw[->] (19) edge node[left, pos=0.25] {\tiny{1}} (21);
   \draw[->] (20) edge node[left, pos=0.25] {\tiny{3}} (22);
   \draw[->] (20) edge node[right, pos=0.25] {\tiny{5}} (23);
   \draw[->] (21) edge node[left, pos=0.25] {\tiny{2}} (23);
   \draw[->] (22) edge node[right, pos=0.25] {\tiny{5}} (24);
   \draw[->] (23) edge node[left, pos=0.25] {\tiny{3}} (24);
   \draw[->] (24) edge node[left, pos=0.25] {\tiny{4}} (25);
\end{tikzpicture}
    \captionof{figure}{$H(\mf{n}_{\omega_{4}}^-)_{\mf{so}_{9}}$ , $H(\mf{n}_{\omega_{5}}^-)_{\mf{so}_{11}}$}
    \label{hasseB4B5}
  \end{minipage}
\end{center}

\begin{center}
\begin{minipage}{\linewidth}
\centering
\begin{align*}
&x_1+x_2+x_3+x_5+x_7+x_9+x_{10}\leq m\\
&x_1+x_2+x_3+x_5+x_8+x_9+x_{10}\leq m\\
&x_1+x_2+x_4+x_5+x_7+x_9+x_{10}\leq m\\
&x_1+x_2+x_4+x_5+x_8+x_9+x_{10}\leq m\\
&x_1+x_2+x_4+x_6+x_8+x_9+x_{10}\leq m
\end{align*}
    \captionof{table}{Polytope $P(m\omega_4)$ corresponding to $\Lg=\mf{so}_{9}$ and $P(m\omega_4)$, $P(m\omega_5)$ corresponding to $\Lg=\mf{so}_{10}$.}
    \label{polyB4}
  \end{minipage}
\end{center}

\begin{center}
\begin{minipage}{\linewidth}
\centering
  \begin{tikzpicture}
  \node (1) at (0,6) {$\beta_1$};
  \node (2) at (1,6) {$\beta_2$};
  \node (3) at (2,6) {$\beta_3$};
  \node (4) at (2,5) {$\beta_4$};
  \node (5) at (3,5) {$\beta_5$};
  \node (6) at (2,4) {$\beta_6$};
  \node (7) at (3,4) {$\beta_7$};
  \node (8) at (2,3) {$\beta_8$};
  \node (9) at (3,3) {$\beta_9$};
  \node (10) at (2,2) {$\beta_{10}$};
  \node (11) at (3,2) {$\beta_{11}$};
  \node (12) at (2,1) {$\beta_{12}$};
  \node (13) at (1,1) {$\beta_{13}$};
  \node (14) at (2,0) {$\beta_{14}$};
  \node (15) at (1,0) {$\beta_{15}$};
  \node (16) at (0,0) {$\beta_{16}$};
	\node (111) at (9,6) {$\beta_1$};
  \node (211) at (8,6) {$\beta_2$};
  \node (311) at (7,6) {$\beta_3$};
  \node (411) at (6,5) {$\beta_4$};
  \node (511) at (7,5) {$\beta_5$};
  \node (611) at (6,4) {$\beta_7$};
  \node (711) at (7,4) {$\beta_6$};
  \node (811) at (6,3) {$\beta_9$};
  \node (911) at (7,3) {$\beta_8$};
  \node (1011) at (6,2) {$\beta_{11}$};
  \node (1111) at (7,2) {$\beta_{10}$};
  \node (1211) at (8,1) {$\beta_{12}$};
  \node (1311) at (7,1) {$\beta_{13}$};
  \node (1411) at (7,0) {$\beta_{14}$};
  \node (1511) at (8,0) {$\beta_{15}$};
  \node (1611) at (9,0) {$\beta_{16}$};
   \draw[->] (1) edge node[above] {\tiny{2}} (2);
   \draw[->] (2) edge node[above] {\tiny{4}} (3);
   \draw[->] (3) -- (4) node[left, pos=0.25] {\tiny{3}};
   \draw[->] (3) edge node[right, pos=0.25] {\tiny{5}} (5);
   \draw[->] (4) edge node[left] {\tiny{1}} (6);
   \draw[->] (5) edge node[right] {\tiny{3}} (7);
   \draw[->] (4) edge node[right] {\tiny{5}} (7);
   \draw[->] (6) edge node[left] {\tiny{5}} (8);
   \draw[->] (7) edge node[right] {\tiny{4}} (9);
   \draw[->] (7) edge node[right] {\tiny{1}} (8);
   \draw[->] (8) edge node[left] {\tiny{4}} (10);
   \draw[->] (9) edge node[right] {\tiny{2}} (11);
   \draw[->] (9) edge node[right] {\tiny{1}} (10);
    \draw[->] (10) edge node[right] {\tiny{2}} (12);
    \draw[->] (10) edge node[right] {\tiny{3}} (13);
   \draw[->] (11) edge node[right] {\tiny{1}} (12);
   \draw[->] (12) edge node[right] {\tiny{3}} (14);
   \draw[->] (13) edge node[right] {\tiny{2}} (14);
   \draw[->] (14) edge node[below] {\tiny{4}} (15);
   \draw[->] (15) edge node[below] {\tiny{5}} (16);
	
	\draw[->] (111) edge node[above] {\tiny{2}} (211);
   \draw[->] (211) edge node[above] {\tiny{4}} (311);
   \draw[->] (311) -- (411) node[left, pos=0.25] {\tiny{3}};
   \draw[->] (311) edge node[right, pos=0.25] {\tiny{5}} (511);
   \draw[->] (411) edge node[left] {\tiny{5}} (611);
   \draw[->] (511) edge node[right] {\tiny{6}} (711);
   \draw[->] (511) edge node[right] {\tiny{3}} (611);
   \draw[->] (611) edge node[left] {\tiny{4}} (811);
   \draw[->] (711) edge node[right] {\tiny{3}} (911);
   \draw[->] (611) edge node[right] {\tiny{6}} (911);
   \draw[->] (811) edge node[left] {\tiny{2}} (1011);
   \draw[->] (811) edge node[right] {\tiny{6}} (1111);
   \draw[->] (911) edge node[right] {\tiny{4}} (1111);
    \draw[->] (1111) edge node[right] {\tiny{5}} (1211);
    \draw[->] (1011) edge node[right] {\tiny{6}} (1311);
   \draw[->] (1111) edge node[right] {\tiny{2}} (1311);
   \draw[->] (1211) edge node[right] {\tiny{2}} (1411);
   \draw[->] (1311) edge node[right] {\tiny{5}} (1411);
   \draw[->] (1411) edge node[below] {\tiny{4}} (1511);
   \draw[->] (1511) edge node[below] {\tiny{3}} (1611);
\end{tikzpicture}
    \captionof{figure}{$H(\Ln_{\omega_6}^-)_{\mathtt{E_6}}$ and $H(\Ln_{\omega_1}^-)_{\mathtt{E_6}}$}
    \label{hasseE6}
  \end{minipage}
\end{center}
\begin{center}
\begin{minipage}{\linewidth}
\centering
\begin{eqnarray*}
x_{1} + x_{2} + x_{3} + x_{4} +  x_{6} + x_{8} + x_{{10}} +  x_{{13}} + x_{{14}} + x_{{15}} + x_{{16}} \leq m\\
x_{1} + x_{2} + x_{3} + x_{4} +  x_{6} + x_{8} + x_{{10}} +  x_{{12}} + x_{{14}} + x_{{15}} + x_{{16}} \leq m\\
x_{1} + x_{2} + x_{3} + x_{4} +  x_{7} + x_{8} + x_{{10}} +  x_{{13}} + x_{{14}} + x_{{15}} + x_{{16}} \leq m\\
x_{1} + x_{2} + x_{3} + x_{4} +  x_{7} + x_{8} + x_{{10}} +  x_{{12}} + x_{{14}} + x_{{15}} + x_{{16}} \leq m\\
x_{1} + x_{2} + x_{3} + x_{4} +  x_{7} + x_{9} + x_{{10}} +  x_{{13}} + x_{{14}} + x_{{15}} + x_{{16}} \leq m\\
x_{1} + x_{2} + x_{3} + x_{4} +  x_{7} + x_{9} + x_{{10}} +  x_{{12}} + x_{{14}} + x_{{15}} + x_{{16}} \leq m\\
x_{1} + x_{2} + x_{3} + x_{4} +  x_{7} + x_{9} + x_{{11}} +  x_{{12}} + x_{{14}} + x_{{15}} + x_{{16}} \leq m\\
x_{1} + x_{2} + x_{3} + x_{5} +  x_{7} + x_{8} + x_{{10}} +  x_{{13}} + x_{{14}} + x_{{15}} + x_{{16}} \leq m\\
x_{1} + x_{2} + x_{3} + x_{5} +  x_{7} + x_{8} + x_{{10}} +  x_{{12}} + x_{{14}} + x_{{15}} + x_{{16}}\leq m\\
x_{1} + x_{2} + x_{3} + x_{5} +  x_{7} + x_{9} + x_{{10}} +  x_{{13}} + x_{{14}} + x_{{15}} + x_{{16}}\leq m\\
x_{1} + x_{2} + x_{3} + x_{5} +  x_{7} + x_{9} + x_{{10}} +  x_{{12}} + x_{{14}} + x_{{15}} + x_{{16}}\leq m\\
x_{1} + x_{2} + x_{3} + x_{5} +  x_{7} + x_{9} + x_{{11}} +  x_{{12}} + x_{{14}} + x_{{15}} + x_{{16}}\leq m\end{eqnarray*}
  \captionof{table}{Polytope $P(m)$ corresponding to $\mathtt{E_6}$}
    \label{polyE6}
  \end{minipage}
\end{center}

\begin{center}
\begin{minipage}{\linewidth}
\centering
\begin{tikzpicture}
  \node (1) at (0,3) {$\beta_1$};
  \node (2) at (1,3) {$\beta_2$};
  \node (3) at (2,3) {$\beta_3$};
  \node (4) at (3,3) {$\beta_4$};
  \node (5) at (3,2) {$\beta_5$};
  \node (6) at (4,2) {$\beta_6$};
  \node (7) at (4,1) {$\beta_7$};
  \node (8) at (5,1) {$\beta_8$};
  \node (9) at (4,0) {$\beta_9$};
  \node (10) at (5,0) {$\beta_{10}$};
  \node (11) at (4,-1) {$\beta_{11}$};
  \node (12) at (5,-1) {$\beta_{12}$};
  \node (13) at (4,-2) {$\beta_{13}$};
  \node (14) at (5,-2) {$\beta_{14}$};
   \node (15) at (6,-2) {$\beta_{15}$};
   \node (16) at (4,-3) {$\beta_{16}$};
  \node (17) at (5,-3) {$\beta_{17}$};
  \node (18) at (4,-4) {$\beta_{18}$};
  \node (19) at (5,-4) {$\beta_{19}$};
  \node (20) at (4,-5) {$\beta_{20}$};
  \node (21) at (5,-5) {$\beta_{21}$};
  \node (22) at (4,-6) {$\beta_{22}$};
  \node (23) at (3,-6) {$\beta_{23}$};
  \node (24) at (3,-7) {$\beta_{24}$};
  \node (25) at (2,-7) {$\beta_{25}$};
  \node (26) at (1,-7) {$\beta_{26}$};
  \node (27) at (0,-7) {$\beta_{27}$};
   \draw[->] (1) edge node[above] {\tiny{1}} (2);
   \draw[->] (2) edge node[above] {\tiny{3}} (3);
   \draw[->] (3) edge node[above] {\tiny{4}} (4);
   \draw[->] (4) edge node[left] {\tiny{2}} (5);
   \draw[->] (4) edge node[right, pos=0.5] {\tiny{5}} (6);
   \draw[->] (5) edge node[left, pos=0.5] {\tiny{5}} (7);
   \draw[->] (6) edge node[right, pos=0.5] {\tiny{6}} (8);
   \draw[->] (6) edge node[right, pos=0.5] {\tiny{2}} (7);
    \draw[->] (7) edge node[left, pos=0.5] {\tiny{4}} (9);
   \draw[->] (8) edge node[right, pos=0.5] {\tiny{2}} (10);
   \draw[->] (7) edge node[right, pos=0.5] {\tiny{6}} (10);
   \draw[->] (9) edge node[left, pos=0.5] {\tiny{3}} (11);
   \draw[->] (10) edge node[right, pos=0.5] {\tiny{4}} (12);
   \draw[->] (9) edge node[right, pos=0.5] {\tiny{6}} (12);
   \draw[->] (11) edge node[left, pos=0.5] {\tiny{1}} (13);
   \draw[->] (12) edge node[right, pos=0.5] {\tiny{3}} (14);
   \draw[->] (11) edge node[right, pos=0.5] {\tiny{6}} (14);
   \draw[->] (12) edge node[right, pos=0.5] {\tiny{5}} (15);
   \draw[->] (13) edge node[left, pos=0.5] {\tiny{6}} (16);
   \draw[->] (14) edge node[right, pos=0.5] {\tiny{5}} (17);
   \draw[->] (14) edge node[left, pos=0.5] {\tiny{1}} (16);
   \draw[->] (15) edge node[right, pos=0.5] {\tiny{3}} (17);
   \draw[->] (16) edge node[left, pos=0.5] {\tiny{5}} (18);
   \draw[->] (17) edge node[right, pos=0.5] {\tiny{4}} (19);
   \draw[->] (17) edge node[left, pos=0.5] {\tiny{1}} (18);
   \draw[->] (18) edge node[left, pos=0.5] {\tiny{4}} (20);
   \draw[->] (19) edge node[right, pos=0.5] {\tiny{2}} (21);
   \draw[->] (19) edge node[left, pos=0.5] {\tiny{1}} (20);
   \draw[->] (20) edge node[left, pos=0.5] {\tiny{2}} (22);
   \draw[->] (21) edge node[right, pos=0.5] {\tiny{1}} (22);
   \draw[->] (20) edge node[left, pos=0.5] {\tiny{3}} (23);
   \draw[->] (23) edge node[left, pos=0.5] {\tiny{2}} (24);
   \draw[->] (22) edge node[right, pos=0.5] {\tiny{3}} (24);
   \draw[->] (24) edge node[below, pos=0.5] {\tiny{4}} (25);
   \draw[->] (25) edge node[below, pos=0.5] {\tiny{5}} (26);
   \draw[->] (26) edge node[below, pos=0.5] {\tiny{6}} (27);
\end{tikzpicture}
    \captionof{figure}{$H(\Ln_{\omega_7}^-)_{\mathtt{E_7}}$}
    \label{hasseE7}
  \end{minipage}
\end{center}

\begin{center}
\begin{minipage}{\linewidth}
\centering
\begin{align*}
 &x_1+x_2+x_3+x_4+x_8+x_{10}+x_{11}+x_{13}+x_{14}+x_{15} &\leq m\\
 &x_1+x_2+x_3+x_4+x_8+x_{10}+x_{12}+x_{13}+x_{14}+x_{15} &\leq m\\
 &x_1+x_2+x_3+x_4+x_7+x_{9}+x_{11}+x_{13}+x_{14}+x_{15} &\leq m\\
 &x_1+x_2+x_3+x_4+x_7+x_{10}+x_{11}+x_{13}+x_{14}+x_{15} &\leq m\\
 &x_1+x_2+x_3+x_4+x_7+x_{10}+x_{12}+x_{13}+x_{14}+x_{15} &\leq m\\
 &x_1+x_2+x_4+x_5+x_8+x_{10}+x_{11}+x_{13}+x_{14}+x_{15} &\leq m\\
 &x_1+x_2+x_4+x_5+x_8+x_{10}+x_{12}+x_{13}+x_{14}+x_{15} &\leq m\\
 &x_1+x_2+x_4+x_5+x_7+x_{9}+x_{11}+x_{13}+x_{14}+x_{15} &\leq m\\
 &x_1+x_2+x_4+x_5+x_7+x_{10}+x_{11}+x_{13}+x_{14}+x_{15} &\leq m\\
 &x_1+x_2+x_4+x_5+x_7+x_{10}+x_{12}+x_{13}+x_{14}+x_{15} &\leq m\\
 &x_1+x_2+x_3+x_6+x_8+x_{10}+x_{11}+x_{13}+x_{14}+x_{15} &\leq m\\
 &x_1+x_2+x_3+x_6+x_8+x_{10}+x_{12}+x_{13}+x_{14}+x_{15} &\leq m\\
\end{align*}
    \captionof{table}{Polytope $P(m\omega_4)$ corresponding to $\mathtt{F_4}$}
    \label{polyF4}
  \end{minipage}
\end{center}


\section*{Acknowledgments}
We are grateful to Ghislain Fourier, Deniz Kus and Peter Littelmann for very inspiring discussions. For many calculations on our polytopes we used the program LattE.  We thank the developers.\\ The work of T. B. was funded by the DFG Priority Program SPP 1388 "Representation Theory", C.D. was partially funded by this program.

\def\cprime{$'$}


\end{document}